\newcommand{\footremember}[2]{%
   \footnote{#2}
    \newcounter{#1}
    \setcounter{#1}{\value{footnote}}%
}
\newcommand{\footrecall}[1]{%
    \footnotemark[\value{#1}]%
} \usepackage{setspace}
\newcolumntype{L}[1]{>{\raggedright\arraybackslash}p{#1}} 
\newcolumntype{C}[1]{>{\centering\arraybackslash}p{#1}} 
\newcolumntype{R}[1]{>{\raggedleft\arraybackslash}p{#1}} 
\newtheorem{theorem}{Theorem}
\numberwithin{theorem}{section}
\newtheorem{assumption}[theorem]{Assumption}
\newtheorem{corollary}[theorem]{Corollary}
\newtheorem{definition}[theorem]{Definition}
\newtheorem{example}[theorem]{Example}
\newtheorem{lemma}[theorem]{Lemma}
\newtheorem{proposition}[theorem]{Proposition}
\newtheorem{remark}[theorem]{Remark}
\numberwithin{equation}{section}
\newcommand{\R}{ {\mathbb{R}} }
\newcommand{\N}{ {\mathbb{N}} }
\newcommand{\Z}{ {\mathbb{Z}} }
\newcommand{\Q}{ {\mathbb{Q}} }
\newcommand{\E}{ {\mathbb{E}} }
\renewcommand{\P}{ {\mathbb{P}} }
\newcommand{\F}{ {\mathcal{F}} }
\newcommand{\h}[1]{\widehat{#1}}
\newcommand{\ind}{\mathds{1}}
\newcommand{\dd}{\,\mathrm{d}}
\newcommand{\ddi}{\mathrm{d}}
\newcommand{\ie}{^{-1}}
\newcommand{\e}{\mathrm{e}}
\newcommand{\de}{\overset{\mathrm{d}}{=}}
\DeclareMathOperator*{\argmin}{arg\,min}
\renewcommand{\L}{\widehat{\mathcal{L}}}
\newcommand{\cL}{\mathcal{L}}
\renewcommand{\epsilon}{\varepsilon}
\newcommand{\sep}{SEP$(X, \mu, \nu)$}
\newcommand{\s}{$\sigma$}
\newcommand{\sfrac}{\genfrac{}{}{}1}
\newcommand*{\medcup}{\mathbin{\scalebox{1.5}{\ensuremath{\cup}}}}%
\renewcommand{\tilde}{\widetilde}
\begin{document}

\title{A Free Boundary Characterisation of the Root Barrier for Markov Processes}
\author{Paul Gassiat * \footremember{alley}{*Universit\'e Paris Dauphine, PSL University, UMR 7534, CNRS, CEREMADE, 75016 Paris, France  (\href{gassiat@ceremade.dauphine.fr}{gassiat@ceremade.dauphine.fr})}  
\and Harald Oberhauser \footremember{trailer}{Mathematical Institute, University of Oxford, Woodstock Road, OX2 6GG, United Kingdom (\href{harald.oberhauser@maths.ox.ac.uk}{harald.oberhauser@maths.ox.ac.uk}, \href{christina.zou@maths.ox.ac.uk}{christina.zou@maths.ox.ac.uk})}  
\and Christina Z. Zou \footrecall{trailer} }
\maketitle
\begin{abstract}
We study the existence, optimality, and construction of non-randomised stopping times that solve the Skorokhod embedding problem (SEP) for Markov processes which satisfy a duality assumption.
These stopping times are hitting times of space-time subsets, so-called Root barriers.
Our main result is, besides the existence and optimality, a potential-theoretic characterisation of this Root barrier as a free boundary.  
If the generator of the Markov process is sufficiently regular, this reduces to an obstacle PDE that has the Root barrier as free boundary and thereby generalises previous results from one-dimensional diffusions to Markov processes.
However, our characterisation always applies and allows, at least in principle, to compute the Root barrier by dynamic programming, even when the well-posedness of the informally associated obstacle PDE is not clear. 
Finally, we demonstrate the flexibility of our method by replacing time by an additive functional in Root's construction. 
Already for multi-dimensional Brownian motion this leads to new class of constructive solutions of (SEP). 
\end{abstract}

\section{Introduction}
We study the Skorokhod embedding problem for Markov processes $X=(X_t)_{t\ge 0}$ evolving in a locally compact space $E$.
That is, given measures $\mu$ and $\nu$ on $E$, the task is to find a stopping time $T$ such that 
\begin{equation}\tag{\sep}
\mbox{ if } X_0 \sim \mu \mbox{ then } X_T \sim \nu.
\end{equation}
Throughout this article we are interested in non-randomised stopping times, that is $T$ is a stopping time in the filtration generated by $X$.
When $X$ is a one-dimensional Brownian motion, this problem has received much attention, partly due to its importance in mathematical finance \cite{hobson2011skorokhod}.
In this case, there exists a wealth of different stopping times that solve \sep, see \cite{obloj2004skorokhod} for an overview.
One of the most intuitive solutions is due to Root \cite{root1969existence}: for a one-dimensional Brownian motion and $\mu$,$\nu$ in convex order, there exists a space-time subset -- the so-called Root barrier -- such that its hitting time by $(t,X_t)$ solves \sep.
More recently, connections with obstacle PDEs \cite{cox2013root,dupire2005arbitrage,gassiat2015integral,gassiat2015root, ghoussoub2019pde}, optimal transport \cite{beiglbock2017optimal, beiglbock2017monotone,beiglbock2019fine, beiglbock2017complete, cox2015embedding, ghoussoub2019solution, guo2016monotonicity, guo2016optimal}, and optimal stopping \cite{cox2013root,de2018optimal} and extensions to the multi-marginal case \cite{beiglboeck2017geometry, cox2019root, richard2018root} have been developed.

However, already for multi-dimensional Brownian motion much less is known about solutions to \sep, see for example work of Falkner \cite{falkner1981distribution} that highlights some of the difficulties that arise in the multi-dimensional Brownian case.
For general Markov processes the literature gets even sparser: Rost \cite{rost1970,rost1971} developed a potential theoretic approach to previous work of Root, 
but in general this shows only the existence of a randomised stopping time for \sep\, when $\mu$ and $\nu$ are in balayage order.
Subsequent works of Chacon, Falkner, and Fitzsimmons, \cite{chacon1986barrier, falkner1981distribution,falkner1991stopping}, expand on these results and provide sufficient conditions for the existence of a non-randomised stopping time; however, in none of these works the question of how to compute these stopping times $T(\omega)$ for a given sample trajectory $X(\omega)$ is addressed.   
Another approach is the application of optimal transport to \sep\, as initiated by Beiglb\"ock, Cox, Huesmann \cite{beiglbock2017optimal}.
This covers Feller processes but verifying the assumptions can be non-trivial.
More importantly, the optimal transport approach currently only addresses the existence and optimality of a stopping time but not its computation. 
Besides these two approaches -- (Rost's) potential theoretic approach and the optimal transport approach -- we are not aware of a general methodology that produces solutions to \sep\,for Markov processes.  

\paragraph{Contribution.}

We focus on the large class of right-continuous transient standard Markov processes satisfying a duality assumption and absolute continuity of the semigroup. Our main result is Theorem~\ref{thm:main} which extends Rost's results and shows existence of a non-randomised Root stopping time and, more importantly, represents the Root barrier as a free boundary via the semigroup of the dual space-time process.
This allows to apply classical dynamic programming to calculate the Root barrier for a large class of Markov processes.
Theorem~\ref{thm:main} also implies that if a PDE theory is available that ensures the well-posedness of the free boundary problem formulated as PDE problem, then numerical methods for PDEs can be used to compute the barrier.
However, in general this requires much stronger assumptions on the Markov process, e.g.~when the generator involves non-local terms as is already the case for one-dimensional L\'evy processes, the well-posedness of such PDEs is an active research area.  

We present a series of examples of processes to which our result applies. The most important one is arguably multi-dimensional Brownian motion (or more generally, hypoelliptic diffusions), but we also discuss stable L\'evy processes and Markov chains on a discrete state space. In all these cases our result allows to compute the Root barrier, and we present several numerical experiments to illustrate this point.

Finally, we show that our approach is flexible enough to construct new classes of solutions to the Skorokhod embedding problem: instead of hitting times of the space-time process $(t,X_t)$, we discuss hitting times of $(A_t,X_t )$ where $A$ is an additive functional of $X$ of the form $\int_0^\cdot a(X_s) \dd s$.
We expect that such an approach holds in much greater generality for other functionals and leave this for further research.

\paragraph{Outline.}

The structure of the article is as follows: Section \ref{sec:notation} introduces notation and basic results from potential theory, Section~\ref{sec:characterization} contains the statement of our main result and Section \ref{sec:proof} contains its proof.  
Section~\ref{sec:examples} then applies this to concrete examples of Markov processes and computations of Root barriers. 
Section~\ref{sec:generalRoot} discusses how these results can be used to construct new solutions of \sep.
In Appendix \ref{app:basic} and Appendix \ref{app:red} we will present fundamental results from classical potential theory used throught this article and in Appendix \ref{app:hypo-elliptic} we discuss details around applying our result to Brownian motion in a Lie group.

\section{Notations and assumptions}\label{sec:notation}
We briefly recall notations from potential theory, mostly following the presentation in Blumenthal and Getoor \cite{blumenthal2007markov}. A detailed description can be found in Appendix \ref{app:basic}. 
Throughout, $E$ is a locally compact metric space with countable base and $\mathcal E$ is the Borel-\s-algebra on $E$. In addition, we write  $\mathcal{E}^\ast$ for the $\sigma$-algebra of universally measurable sets  and   $\mathcal{E}^n$ for the $\sigma$-algebra of nearly Borel sets, see Definition \ref{def:univmeas}.

Let $\left(\Omega, \F, (\F_t)_{t \geq 0}, (X_t)_{t\geq 0}, (\P^x)_{x \in E}\right)$ denote a filtered probability space that carries a stochastic process $X$.
To allow for killing we add an absorbing cemetery state $\Delta$ to the state space, that is we define $E_\Delta := E \cup \{\Delta\}$ and for all $t\geq 0$ if $X_t(\omega)=\Delta$, then $X_s(\omega)=\Delta$ for all $s>t$.
Denote with $\zeta := \inf\{t \geq 0:~ X_t =\Delta\}$ the lifetime of the process.
Each $\P^x$ is then a probability measure on paths with $X_0 = x$, $\P^x$-a.s for all $x\in E_\Delta$. Furthermore, for $t\geq 0$ let $\theta_t$ be the natural shift operator of the the process, i.e. $\theta_t(X_s(\omega))=X_{t+s}(\omega)$ for all $s \geq 0$.  
 Throughout, we assume that $X$ is a \emph{standard process}, see Definition \ref{def:stdproc}, in particular we assume that $X$ has c\`adl\`ag paths and satisfies the strong Markov property. We write $P= (P_t)_{t\geq 0}$ for the Markovian transition semigroup of $X$ and $U = \int_0^\infty P_t\dd t$ for its potential and write as usual $P_tf$, $\mu P_t$, $Uf$ and $\mu U$ for the actions on Borel functions $f:E\to \R $ and Borel measures $\mu$ on $E$. For an $(\F_t)_{t\geq 0}$-stopping time $T$, we write $P_T(x,\dd y)=\P^x\left(  X_T\in \dd  y; T <\zeta\right)$ and for 
first hitting times $T_A =\inf\{t>0:\,X_t\in A\}$ of A$\in \mathcal{E}$, we write $P_A= P_{T_A}$. We write $A^r$ for the regular points of a nearly Borel set $A$, see Table \ref{tbl:fine} in Appendix \ref{app:basic}.
 
 A central role will be played by lifting $X$ to a space-time process $\overline X$, that is $\overline X_t : = (\tau_t,X_t)$ with $\tau_t=\tau_0+t$, with the space-time semigroup $Q=(Q_t)_{t\geq 0}$ acting on Borel functions $g:\R\times E\to\R$ and Borel measures $\mu$ on $E$ as follows:
$$ Q_s g(t,x)= P_sg(t+s, \cdot)(x) \qquad (\delta_s\times\mu) Q_t (I\times A)= \P^\mu(X_{s+t} \in A)\ind_{\{s+t\in I\}},$$
where $t\in \R$, $x\in E$, $A\in \mathcal{E}$ and $I\subseteq \R$.  

\paragraph{Duality.}

Throughout this paper we make the following assumption,
\begin{assumption} \label{asn:dual}
There exists a standard process $\h X$ with semigroup $\h P$ on the same probability space, and some $\sigma$-finite measure $\xi$ on $E$ such that for all $t \geq 0$ and $f,g \geq 0$ $\mathcal{E}^\ast$ -measurable,
\begin{equation} \label{eq:dual} 
 \int_{E} (P_t f) g \dd\xi = \int_{E} f (g \h{P}_t) \dd\xi. 
\end{equation}
Furthermore, the semigroups of $X$ and $\h X$ are absolutely continuous with respect to $\xi$,
\begin{equation} \label{eq:asnAC}
P_t(x,\cdot) \ll \xi, \quad \h{P}_t(\cdot, y) \ll \xi ,\qquad \forall x,y \in E.
\end{equation}
\end{assumption}

\begin{remark}
  Relation \eqref{eq:dual} is referred to in the literature as \emph{weak duality}.
  The processes $X$ and $\h{X}$ are said to be in \emph{strong duality} with respect to $\xi$ (as defined in \cite[Ch. VI]{blumenthal2007markov} or \cite[Ch.13]{chung2006markov}), if, in addition to \eqref{eq:dual}, the resolvent kernels are absolutely continuous with respect to $\xi$. This is weaker than the absolute continuity of the semigroup, so that in particular, strong duality of $X $ and $\h{X}$ holds under Assumption \ref{asn:dual}.
\end{remark}

We write $\h P$ and $\h U$ for the semigroup and potential kernel of $\h X$
, and we denote the actions of these operators on Borel functions $f$ and measures $\mu$ on the other side as for $X$, i.e. $f\h P_t$, $f\h U$ and $\h U \mu$. 
Furthermore, we use the prefix ``co'' for the corresponding properties relating to $\h X$, e.g.~coexcessive, copolar, cothin, etc., and we write $\h T_A = \inf\{t>0:\,\h X_t\in A\}$ and  ${}^rA$ for the coregular points of a measurable set $A$.

By~\cite{getoor1982excursions,wittmann1986natural}, absolute continuity of the semigroups implies that the corresponding space-time processes  $(\tau_t,X_t)$, and $(\h{\tau}_t, \h{X}_t)$, where  $\h{\tau}_t= \h{\tau}_0-t$, are in strong duality with respect to the measure $\lambda \otimes \xi$, where $\lambda$ is the Lebesgue measure on the real line.
We denote by $\h{Q}$ the semigroup corresponding to the space-time process $(\h{\tau}_t, \h{X}_t)$.
For every $s\geq 0$ and $(\mathcal{B}(\R)\times \mathcal{E})$-$\mathcal{B}(\R)$-measurable function $g$,
\begin{align*}
 (Q_sg)(t,x) = P_sg(t+s,\cdot)(x), \qquad (g\h Q_s )(t,x)= g(t-s, \cdot )\h P_s (x). 
\end{align*}
In addition, there exists a Borel function $(t,x,y) \mapsto p_t(x,y)$ such that for all $t>0$ and $x,y$ in $E$,
$P_t(x,\dd y) = p_t(x,y) \xi(\ddi y)$ and $\h{P}_t(\ddi x,y) = p_t(x,y)\xi(\ddi x)$, and $p$ satisfies the Kolmogorov--Chapman relation
\begin{equation} \label{eq:KC}
\forall t,s >0, \;\; \forall x,y \in E:\; \;\; p_{t+s}(x,y) = \int \xi(\ddi z) p_t(x,z) p_s(z,y).
\end{equation}
 The function $u(x,y) := \int_0^\infty p_t(x,y)\dd t$ is excessive in $x$ (for each fixed $y$), coexcessive in $y$, and is a density for $U$ and $\h{U}$.
 
 Note that the duality assumption implies by \cite[Ch. IV, Prop. (1.11)]{blumenthal2007markov}) that a measure $\mu$ is excessive if and only if it has a density which is coexcessive and finite $\xi$-almost everywhere.
Hence, the density of the potential $\mu U$ with respect to $\xi$ is given by the (coexcessive) potential function $\mu \h U$.

\begin{table}[h!]
\begin{spacing}{1.2}
\begin{tabular}{L{0.23\textwidth}R{0.13\textwidth}@{$\,=\,$}L{0.23\textwidth} R{0.13\textwidth}@{$\,=\,$}L{0.23\textwidth}}
\toprule
& \multicolumn{2}{c}{$X$} & \multicolumn{2}{c}{$\h X$}\\
\midrule
semigroup & $P_tf(x)$ & $\int p_t(x, y) f(y) \xi(\ddi y)$ & $f\h P_t(y) $ & $ \int f(x) p_t(x, y)  \xi(\ddi x)$\\
 & $\mu P_t(\ddi y)$ & $\int \mu(\ddi x) p_t(x, y)  \xi(\ddi y)$ & $\h P_t\mu(\ddi x) $ & $ \int \xi(\ddi x) p_t(x, y) \mu(\ddi y) $\\
potential function & $Uf(x) $ & $ \int u(x,y) f(y) \xi(\ddi y)$ & $f\h U(y) $ & $ \int f(x)u(x,y)  \xi(\ddi x)$\\
& $U\mu (x) $ & $\int u(x,y) \mu(\ddi y) $ & $\mu \h U(y) $ & $ \int \mu (\ddi x) u(x,y)$\\
potential measure   & $\mu U(\ddi y) $ & $\int \mu(\ddi x) u(x,y) \xi(\ddi y)$ & $\h U\mu(\ddi x) $ & $ \int  u(x,y)  \mu(\ddi y)\xi (\ddi x)$\\
\bottomrule
\end{tabular}
\end{spacing}
\caption{(Densities of) semigroup and potentials for $X$ and its dual $\h X$.}
\end{table}

\begin{remark}\label{rem:Borel}
The functions which are (co-)excessive with respect to $P$, $\h P$, $Q$ and $\h Q$ are actually Borel-measurable. 
Indeed, strong duality of the corresponding processes guarantees the existence of a so-called \emph{reference measure}\footnote{a \s-finite measure $\xi$ is a reference measure for $X$ if for all Borel $B$, ($U(x,B) = 0$ for all $x$) $\Leftrightarrow$  $\xi(B)=0$.} (for more details see \cite[Ch. VI]{blumenthal2007markov}). In this case Proposition (1.3) in \cite[Ch. V]{blumenthal2007markov} implies that excessive functions are Borel-measurable. 
\end{remark}

We repeatedly use the following classical result, 

\begin{proposition}[Hunt's switching formula, {\cite[VI.1.16]{blumenthal2007markov}}] \label{prop:hsf}
Let $X$, $\h X$ be standard processes in strong duality. Then for all Borel-measurable $B$, one has $P_B u = u \h{P}_B$, i.e. for all $x, y \in E$,
$$\E^x \big[ u(X_{T_B},y) \big] = \h \E^y \big[ u(x,\h X_{\h T_B}) \big].$$
\end{proposition}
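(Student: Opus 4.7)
The plan is to exploit the duality of semigroups via a resolvent approximation, since working directly with the (possibly infinite) $0$-potential $u$ is awkward. For $\alpha>0$, introduce the $\alpha$-potential density $u^\alpha(x,y)=\int_0^\infty e^{-\alpha t}p_t(x,y)\dd t$ and the discounted hitting kernel $P_B^\alpha(x,\dd z)=\E^x[e^{-\alpha T_B};X_{T_B}\in \dd z]$, with $\hat P_B^\alpha$ defined analogously for $\hat X$. I would first prove the $\alpha$-version
$$P_B^\alpha u^\alpha(x,y)\;=\;u^\alpha \h P_B^\alpha(x,y),$$
and then let $\alpha\downarrow 0$ and invoke monotone convergence on both sides.

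The $\alpha$-version itself is proved in integrated form. For bounded non-negative Borel $f,g$ I would compute
$$\int g(x)\,\E^x\!\big[e^{-\alpha T_B}U^\alpha f(X_{T_B})\big]\,\xi(\ddi x),$$
using the strong Markov property at $T_B$ to rewrite $\E^x[e^{-\alpha T_B}U^\alpha f(X_{T_B})]=\E^x\!\big[\int_{T_B}^\infty e^{-\alpha t}f(X_t)\dd t\big]$. The goal is to convert this into its co-analogue $\int f(y)\hat\E^y\!\big[\int_{\hat T_B}^\infty e^{-\alpha t}g(\hat X_t)\dd t\big]\xi(\ddi y)$ using the duality \eqref{eq:dual} propagated across the random horizon $T_B$. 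Concretely, I would approximate $T_B$ by dyadic stopping times $T_B^{(n)}=2^{-n}\lceil 2^n T_B\rceil$, apply the one-step duality inductively over the grid intervals $[k2^{-n},(k+1)2^{-n}]$, and pass to the limit using quasi-left-continuity of the standard process. This yields
$$\int\!\!\int g(x)h(y)\,P_B^\alpha u^\alpha(x,y)\,\xi(\ddi x)\xi(\ddi y)=\int\!\!\int g(x)h(y)\,u^\alpha \h P_B^\alpha(x,y)\,\xi(\ddi x)\xi(\ddi y)$$
for all bounded non-negative Borel $g,h$, so the two kernels agree $\xi\otimes\xi$-a.e.

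To upgrade $\xi\otimes\xi$-a.e. equality to pointwise equality, I would note that $x\mapsto P_B^\alpha u^\alpha(x,y)$ is $\alpha$-excessive (as the $\alpha$-balayage onto $B$ of the $\alpha$-excessive function $u^\alpha(\cdot,y)$) and $y\mapsto u^\alpha\h P_B^\alpha(x,y)$ is $\alpha$-coexcessive, for each fixed second argument. Since $\xi$ is a reference measure (Remark~\ref{rem:Borel}), two $\alpha$-(co)excessive functions coinciding $\xi$-a.e. coincide everywhere, giving the pointwise identity. Finally, both $P_B^\alpha u^\alpha\uparrow P_B u$ and $u^\alpha\h P_B^\alpha\uparrow u\h P_B$ monotonically as $\alpha\downarrow 0$.

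The main obstacle is the duality conversion across the random time $T_B$: the semigroup identity \eqref{eq:dual} is one-step, and propagating it to hitting times requires either a dyadic discretisation argument leaning on quasi-left-continuity, or an equivalent time-reversal argument in the Kuznetsov-measure sense. Care is also needed because $T_B$ is defined via strict inequality and the sets $B^r$, ${}^rB$ of (co)regular points differ in general; the excessiveness upgrade in Step 2 is what absorbs the difference between $B$ and its regular points, so that the final formula holds for every $x,y\in E$ rather than only quasi-everywhere.
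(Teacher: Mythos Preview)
The paper does not give its own proof of this proposition; it is stated with a citation to Blumenthal--Getoor \cite[VI.1.16]{blumenthal2007markov} and used as a black box. So there is no ``paper's proof'' to compare against, only the classical one in the reference.

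Your overall architecture (prove an $\alpha$-version $P_B^\alpha u^\alpha = u^\alpha \h P_B^\alpha$ and let $\alpha\downarrow 0$ by monotone convergence) is exactly the standard route. The gap is in the step you yourself flag as ``the main obstacle'': converting the integrated identity across the random time $T_B$ by dyadic discretisation. Semigroup duality \eqref{eq:dual} at fixed times does not propagate to first hitting times in the way you suggest. Under time reversal at a fixed horizon $t$, the first hitting time of $B$ by $X$ corresponds to the \emph{last exit time} from $B$ for the reversed path, not to $\h T_B$; so a grid argument that just iterates one-step duality will not produce $\h T_B$ on the other side without substantial additional input. Your sketch does not explain what that input would be.

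The classical proof avoids this entirely by characterising both sides as the same r\'eduite. One shows that $x\mapsto u^\alpha \h P_B^\alpha(x,y)$ is the $\alpha$-potential $U^\alpha \mu_B^y(x)$ of the swept measure $\mu_B^y:=\h P_B^\alpha(\cdot,y)$, which is carried by the cofine closure of $B$; hence it is $\alpha$-excessive in $x$, dominated by $u^\alpha(\cdot,y)$, and agrees with $u^\alpha(\cdot,y)$ on $B^r$. The same three properties hold for $P_B^\alpha u^\alpha(\cdot,y)$ by definition of balayage. Uniqueness of the r\'eduite (Hunt's balayage theorem) then gives equality pointwise, with no need for a time-reversal or discretisation argument. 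If you want to complete your write-up, replace the dyadic step with this balayage characterisation; the rest of your outline (the $\alpha\downarrow 0$ limit, and the use of excessiveness to pass from a.e.\ to everywhere) is fine.
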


\begin{remark}
The dual process $\h{X}$ can be thought of as $X$ running backwards in time. In fact, strong duality implies that for non-negative bounded Borel functions $f$ and $g$ it holds
\begin{align*}
 \E^\xi[ f(X_0)g(X_t)] = \h \E^\xi[ f(\h X_t) g(\h X_0)].
\end{align*}
More generally and ignoring technicalities (see~\cite[Ch.13]{chung2006markov} for details), if we take $\Omega$ to be the canonical probability space and let $r_t:
\Omega \rightarrow \Omega$ denote the right-continuous time reversal at time $t$, that is
$\omega':=r_t(\omega)$ is given as $\omega'(s):=\omega(t-s-)$, then for any $F$
 that is $\F_t$-measurable 
\begin{align*}
 \E^\xi [ F] = \h \E^\xi [ F\circ r_t]. 
\end{align*}
Informally, strong duality of $X$ with another standard process requires that two conditions are met:
\begin{inparaenum}[(i)]
 \item $X$  admits an excessive reference measure,
   \item the right-continuous version of its time reversal is a standard process and in particular satisfies the strong Markov property.
\end{inparaenum}
We refer to~\cite[Ch. 15]{chung2006markov} and~\cite{smythe1973existence} for a detailed discussion.
\end{remark}

\begin{remark}\label{rem:dirichlet}
A practical approach to obtain Markov processes in duality is via Dirichlet forms.
Given a Markov process with generator $\mathcal{L}$, this consists in considering the bilinear form 
\begin{align*}
\mathcal{D}(f,g) := -  \int (\mathcal{L} f) g \dd\xi,
\end{align*}
extended to a suitable class of functions $f,g$.
The theory of Dirichlet forms, see e.g.~\cite{MaRoeckner}, then provides sufficient analytic criteria on $\mathcal{D}$ so that it is associated to a pair of (standard) Markov processes in weak duality with respect to $\xi$. It is also possible to obtain existence (and further properties) of transition densities for a Markov semigroup by considering functional inequalities (such as Nash inequality) satisfied by the associated Dirichlet form, see e.g. \cite{CKS87}.

\end{remark}

\section{A free boundary characterisation} \label{sec:characterization}
\begin{definition}[Root barrier] A subset $R$ of $\R_+ \times E$ is called a \emph{Root barrier} for $X$ if $R$ is nearly Borel-measurable with respect to the space-time process $\overline X$ and
  \[(t,x) \in R, \;s > t \;\; \Longrightarrow \;\;(s,x) \in R.\]
We call the first hitting time $T_R = \inf \{t > 0 : (t, X_t)\in R\}$ the \emph{Root stopping time} associated with $R$.
\end{definition}
Dealing with the regularity of $R$ is a central theme of this article and it is useful to introduce ``right-''  and ``left-''continuous modifications $R^-$ and $R^+$ of $R$.
\begin{definition}
For a Root barrier $R$ denote with \[R_t=\{ x \in E: (t,x) \in R\}\] the section at time $t$.
 We define $R^- \subset R \subset R^+$ as  
 \begin{align*}
 R^-&=\bigcup_{t\ge 0 }~[t,\infty) \times R^-_t \quad \text{ with } R^-_t = \bigcup_{s<t}~ R_s, \\
 R^+&= \bigcup_{t\ge 0 }~[t,\infty) \times R^+_t \quad \text{ with } R^+_t = \bigcap_{s>t} ~R_s.
 \end{align*}
\end{definition}
\begin{remark}
An equivalent definition of the barrier is that the mapping $t \mapsto R_t$ is non-decreasing. This also implies that $R^-$ and $R^+$ are barriers as well.
As $R$ is nearly Borel-measurable with respect to $\overline{X}$ then so are the shifted barriers $R^{s} := \{ (t-s,x) ,\;\; (t,x) \in R\}$ for any  $s \in \R$, as then 
$$R^- = \bigcup_{s < 0,~ s \in \Q} R^{s}, \;\;\;\;\;\;\;\;\; R^+ = \bigcap _{s>0,~ s \in \Q} R^s.$$
 \end{remark}

\begin{definition}[Balayage order]
Two probability measures $\mu$ and $\nu$ are in \emph{balayage order}, if their potentials $\mu U$ and $\nu U$ satisfy 
\begin{equation}
\mu U(A) \geq \nu U (A) \qquad \text{for all measurable sets } A.\label{eqn:convorder}
\end{equation}
In this case we will write $\mu \prec\nu $ and say that $\mu$ \emph{is before} $\nu$. 
\end{definition}
\begin{remark}
Under Assumption \ref{asn:dual}, \eqref{eqn:convorder} is equivalent to 
\begin{equation}\label{eq:balayage}
\mu \h U (x) \geq \nu \h U (x) \qquad \text{for all } x\in E.
\end{equation}
The inequality \eqref{eq:balayage} holds everywhere if and only if it holds $\xi$-almost everywhere, since both sides are coexcessive functions.
\end{remark}
We now state our main result,
\begin{theorem} \label{thm:main}
  Let $X$ be a Markov process for which Assumption \ref{asn:dual} holds. 
  Let $\mu, \nu$ be two measures such that $\mu U$ and $\nu U$ are $\sigma$-finite measures and such that $\nu$ charges no semipolar set.
  If $\mu \prec \nu $ then there exists a Root barrier $R$ for $X$ such that  
\[\mu P_{T_R} = \nu.\]
Moreover, if we set 
\begin{equation}\label{eq:reprred}
  f^{\mu,\nu}(t,x) := \inf \big\{ g\mbox{ $\h Q$-excessive:} \;\;\; g \geq \mu\h U(x) \ind_{\{t \leq 0\}} + \nu \h U(x) \ind_{\{t >0\}} \big\},
\end{equation}
then
\begin{enumerate}
\item\label{thm:itm:MRE}  $f^{\mu,\nu}(t,x) = \mu P_{t \wedge T_R} \h{U}(x)$,
\item\label{thm:itm:optimal} 
$ T_R = \argmin \limits_{S:~ \mu P_S = \nu} \mu P_{t\wedge S} U(B)$  for any Borel set $B$ and $t\geq 0$,
\item\label{thm:itm:freeBD} in the above we may take
  \begin{align*}
R = \left\{(t,x)\in \R_+ \times E\;\; |\;\; f^{\mu,\nu}(t,x) = \nu \h U (x) \right\}.
  \end{align*}
\end{enumerate}
\end{theorem}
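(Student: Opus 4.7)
The plan is to recognise $f^{\mu,\nu}$ as a réduite (equivalently, an optimal-stopping value) for a two-tiered obstacle on the dual space-time process, to read off the Root barrier from the corresponding contact set, and then to transfer the resulting dual identity into the primal embedding property via Hunt's switching formula.

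Write $h(t,x):=\mu\h U(x)\ind_{\{t\le 0\}}+\nu\h U(x)\ind_{\{t>0\}}$; both $\mu\h U$ and $\nu\h U$ are $\h P$-excessive by Assumption~\ref{asn:dual}, so $h$ restricted to either half-space is $\h Q$-excessive. The theory of réduites in Appendix~\ref{app:red} then yields the optimal stopping representation
\[
f^{\mu,\nu}(t,x)=\sup_{\h S}\h\E^{(t,x)}\big[h(\h\tau_{\h S},\h X_{\h S})\big],
\]
with the supremum attained at the first hitting of the contact set $\{f^{\mu,\nu}=h\}$ by $(\h\tau,\h X)$; Remark~\ref{rem:Borel} guarantees $f^{\mu,\nu}$ is Borel. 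Because $\mu\prec\nu$ gives $\mu\h U\ge\nu\h U$, the obstacle $h$ is non-increasing in $t$, and the réduite inherits this monotonicity. Setting $R:=\{(t,x)\in\R_+\times E:f^{\mu,\nu}(t,x)=\nu\h U(x)\}$, for $(t,x)\in R$ and $s>t$ one has $\nu\h U(x)\le f^{\mu,\nu}(s,x)\le f^{\mu,\nu}(t,x)=\nu\h U(x)$, so $(s,x)\in R$ and $R$ is a Root barrier, proving (\ref{thm:itm:freeBD}).

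The crux is establishing the identity in (\ref{thm:itm:MRE}): $f^{\mu,\nu}(t,x)=\mu P_{t\wedge T_R}\h U(x)$ for $t\ge 0$. Set $g(t,x):=\mu P_{t\wedge T_R}\h U(x)$ for $t\ge 0$ and $g(t,x):=\mu\h U(x)$ for $t<0$; I will show $g$ is $\h Q$-excessive and coincides with $f^{\mu,\nu}$. $\h Q$-excessiveness follows from the strong Markov property of $X$ stopped at $t\wedge T_R$ combined with the coexcessiveness of $\h U$. For the identification, the space-time lift of Hunt's switching formula (Proposition~\ref{prop:hsf}) converts the dual optimal-stopping representation into the primal quantity $\mu P_{t\wedge T_R}\h U$: the dual optimiser is the hitting time of the contact set by $(\h\tau,\h X)$, and switching identifies it with the primal hitting time $T_R$, up to a set of nearly polar exceptions which is controlled by the assumption that $\nu$ charges no semipolar set (ensuring the obstacle is attained on the regularised barrier $R^r$ rather than merely on $R$).

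With the identity in hand, letting $t\to\infty$ gives $\mu P_{T_R}\h U=\nu\h U$; since coexcessive potential functions determine $\sigma$-finite measures under Assumption~\ref{asn:dual}, we conclude $\mu P_{T_R}=\nu$. Optimality (\ref{thm:itm:optimal}) follows at once: for any competing embedding $S$, the map $(t,x)\mapsto\mu P_{t\wedge S}\h U(x)$ is $\h Q$-excessive and majorises $h$ (for $t>0$, further stopping only enlarges potentials, so $\mu P_{t\wedge S}\h U\ge\mu P_S\h U=\nu\h U$), hence minimality of $f^{\mu,\nu}$ forces $\mu P_{t\wedge T_R}\h U\le\mu P_{t\wedge S}\h U$ pointwise; integrating against $\ind_B\,\xi$ yields the claimed inequality of potentials. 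The hardest step is the Hunt-type identification in the previous paragraph — lifting the switching formula to the space-time processes and matching the dual optimal stopping time with the primal hitting time of $R$ up to regularisation — which is precisely where the semipolar hypothesis on $\nu$ is invoked.
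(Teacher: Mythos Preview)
Your outline reverses the logical order of the paper's argument, and in doing so the central step becomes a gap rather than a proof. The paper does \emph{not} define $R$ as the contact set and then show $T_R$ embeds. Instead it first invokes Rost's theorem (Theorem~\ref{thm:rost}), which already supplies a (possibly randomised) stopping time $T$ with $\mu P_T=\nu$, the minimal residual expectation property, \emph{and} the r\'eduite identity $\mu P_{t\wedge T}\h U = f^{\mu,\nu}$ (via the filling scheme and Lemma~\ref{lem:RedMtoF}). Only then does it show $T=T_R$ for some barrier $R$ (Lemmas~\ref{lem:rightcont semigroup}--\ref{lem:RR+}), and finally that $T_R=T_{\tilde R}$ for the contact set $\tilde R$ (Propositions~\ref{prop:leq} and~\ref{prop:geq}, each a separate inequality with its own use of Hunt switching applied to \emph{time-sections} $R_s$, not to the space-time barrier itself).

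Your ``crux'' paragraph is where the argument breaks. You assert that a space-time lift of Hunt's switching formula converts the dual optimal-stopping value into $\mu P_{t\wedge T_R}\h U(x)$, but this is not a direct application of Proposition~\ref{prop:hsf}: the dual optimiser hits the contact set $\{f^{\mu,\nu}=h\}$, and there is no a priori reason its image under switching should be $t\wedge T_R$ on the primal side --- indeed, making this precise is essentially the content of Rost's filling-scheme argument together with the paper's Propositions~\ref{prop:leq}--\ref{prop:geq}. Moreover, your route to $\mu P_{T_R}=\nu$ by ``letting $t\to\infty$'' presupposes $\lim_{t\to\infty} f^{\mu,\nu}(t,x)=\nu\h U(x)$, which you have not established without already knowing an embedding exists; in the paper this limit is a consequence of $\mu P_T=\nu$ from Rost's theorem, not an input to it. In short, the steps you label ``hardest'' are genuinely hard: they require Rost's existence result plus the barrier-shaking Lemma~\ref{lem:RR+} and the two-sided comparison via Lemma~\ref{lem:TTR2}, none of which can be replaced by a single invocation of switching.
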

\noindent Besides existence and optimality of a Root stopping time, the main interest of Theorem~\ref{thm:main} is that item~\eqref{thm:itm:freeBD} provides a way to compute the Root barrier for a large class of Markov processes ranging from L\'evy processes to hypo-elliptic diffusions, see the examples in Section~\ref{sec:examples}.
Concretely, it allows to use classical optimal stopping and the dynamic programming algorithm to compute $f^{\mu,\nu}$ and hence $R$.
We state this as a corollary: 
\begin{corollary}\label{cor:OST}
Using the same notation and assumptions as in Theorem~\ref{thm:main} it holds that 
\begin{enumerate}
\item\label{cor:itm:OST}  $f^{\mu,\nu}$ is the value function of the optimal stopping problem
 \begin{equation}
f^{\mu,\nu}(t,x) = \sup_{\tau} \E^x \left[ \mu\h U\left(\h{X}_\tau\right) \ind_{\{\tau=t\}} + \nu\h U\left(\h{X}_\tau\right) \ind_{\{\tau<t\}} \right] \quad\forall t \geq 0, x \in E, 
\end{equation}
where the supremum is taken over stopping times $\tau$ taking values in $[0,t]$.
\item\label{cor:itm:DP} 
If we define for $n \geq 0$ the function $f_n^{\mu,\nu}$ on $\{ k2^{-n},~ k \geq 0\}  \times E$ by
\[f_n^{\mu,\nu}(0,\cdot) = \mu \h{U}, \;\;\;\;f_n^{\mu,\nu}(2^{-n}(k+1),\cdot) = \max \left\{ \left(f^{\mu,\nu}_n(2^{-n}k,\cdot) \right)\h{P}_{2^{-n}} ,\nu \h{U}\right\},\]
then for each $t\geq 0$, $x \in E$,
\[f^{\mu,\nu}(t,x) = \lim_{n \to \infty} f^{\mu,\nu}_n\left( 2^{-n} \lfloor 2^n t \rfloor,x\right).\]

\end{enumerate}
\end{corollary}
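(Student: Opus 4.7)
By construction, $f^{\mu,\nu}$ defined in \eqref{eq:reprred} is the least $\h{Q}$-excessive majorant of the obstacle $h(t,x):=\mu\h{U}(x)\ind_{\{t\leq 0\}}+\nu\h{U}(x)\ind_{\{t>0\}}$. For \eqref{cor:itm:OST}, the plan is to invoke the classical identification, for standard Markov processes in strong duality, of a réduite with the value function of the associated optimal stopping problem. Applied to the dual space-time process $(\h{\tau}_s,\h{X}_s)$, which when started from $(t,x)$ evolves as $\h{\tau}_s=t-s$, this gives
\[f^{\mu,\nu}(t,x) \;=\; \sup_\tau \E^x\!\left[\mu\h{U}(\h{X}_\tau)\ind_{\{\tau\geq t\}} + \nu\h{U}(\h{X}_\tau)\ind_{\{\tau<t\}}\right],\]
the supremum ranging over all $\h{X}$-stopping times. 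A short use of the Markov property at time $t$ together with coexcessivity of $\mu\h{U}$ yields $\E^x[\mu\h{U}(\h{X}_\tau)\mid\F_t]\leq \mu\h{U}(\h{X}_t)$ on $\{\tau\geq t\}$, so the supremum is unchanged after restricting to $\tau\in[0,t]$, producing the formula in \eqref{cor:itm:OST}.

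For \eqref{cor:itm:DP}, I would first identify, by a one-line induction on $k$ using the strong Markov property of $\h{X}$ and the rewriting $\E^y[g(\h{X}_{2^{-n}})]=g\h{P}_{2^{-n}}(y)$, that the iterate $f_n^{\mu,\nu}(k2^{-n},\cdot)$ equals the value of the restricted optimal-stopping problem obtained by constraining the stopping times to $\mathcal{T}_n^k := \{\h{X}\text{-stopping times valued in }\{j2^{-n}:0\leq j\leq k\}\}$; the recursion in the statement is exactly the discrete Bellman step ``stop now for $\nu\h{U}$ or continue one tick for $f_n(k2^{-n},\cdot)\h{P}_{2^{-n}}$''. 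Setting $t_n:=2^{-n}\lfloor 2^n t\rfloor$, the convergence $f_n^{\mu,\nu}(t_n,x)\to f^{\mu,\nu}(t,x)$ is then a monotone approximation. For the lower bound, any $\tau\in[0,t]$ can be approximated by its dyadic rounding $\tau^{(n)}:=2^{-n}\lfloor 2^n\tau\rfloor\in\mathcal{T}_n^{2^n t_n}$ with $\tau^{(n)}\uparrow\tau$; quasi-left-continuity of the standard process $\h{X}$ gives $\h{X}_{\tau^{(n)}}\to\h{X}_\tau$ almost surely, and dominated convergence applied to \eqref{cor:itm:OST} yields $\liminf_n f_n^{\mu,\nu}(t_n,x)\geq f^{\mu,\nu}(t,x)$. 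For the upper bound, the inclusion of dyadic stopping times into all stopping times $\leq t_n$ gives $f_n^{\mu,\nu}(t_n,x)\leq f^{\mu,\nu}(t_n,x)$, and one lets $t_n\uparrow t$ using the identity $f^{\mu,\nu}(s,x)=\mu P_{s\wedge T_R}\h{U}(x)$ from Theorem~\ref{thm:main} together with the càdlàg regularity of $X$ to pass to the limit.

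The main obstacle will lie in the upper bound. Because $\mu\h{U}\geq\nu\h{U}$, a dyadic stopping time that lands on $\tau=t_n<t$ is rewarded with the larger payoff $\mu\h{U}(\h{X}_{t_n})$ inside $f_n^{\mu,\nu}(t_n,\cdot)$, whereas the continuous-time problem at horizon $t$ would only credit $\nu\h{U}(\h{X}_{t_n})$ for the same choice. This slack must be absorbed into the vanishing coexcessivity decrement $\E^x[\mu\h{U}(\h{X}_{t_n})-\mu\h{U}(\h{X}_t)]\to 0$ as $t_n\uparrow t$, and the standing assumption that $\nu$ charges no semipolar set is what ultimately supplies the path regularity needed to carry this absorption out rigorously.
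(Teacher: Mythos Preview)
Your treatment of part~\eqref{cor:itm:OST} is correct and matches the paper: both identify $f^{\mu,\nu}$ as the r\'eduite of the obstacle $h$ for the dual space-time semigroup $\h Q$, and then invoke the classical equality between r\'eduite and optimal-stopping value (this is exactly Proposition~\ref{prop:red1}(2) in the appendix). The reduction from stopping times valued in $[0,\infty)$ to $[0,t]$ via coexcessivity of $\mu\h U$ is the right move.

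For part~\eqref{cor:itm:DP}, the paper does not spell out a proof but relies on Proposition~\ref{prop:red1}(3), namely Shiryaev's iteration $\mathrm{Red}_{\h Q}(h) = \lim_{n}\lim_{N} R_{2^{-n}}^N(h)$ with $R_\delta g = g \vee g\h Q_\delta$. Once one checks that for the specific obstacle $h$ the inner limit $\lim_N R_{2^{-n}}^N(h)(t,x)$ coincides with $f_n^{\mu,\nu}$ evaluated at the appropriate dyadic point (which is straightforward using that $\mu\h U$ is coexcessive, so iterating past the horizon gains nothing), the convergence is supplied by Shiryaev and no hand-crafted stopping-time approximation is needed. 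Your approach instead attempts to reprove this convergence from scratch, which is legitimate but runs into a concrete gap.

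The gap is in your lower bound: the dyadic floor $\tau^{(n)} := 2^{-n}\lfloor 2^n\tau\rfloor$ is \emph{not} a stopping time. Indeed $\{\tau^{(n)} \le j2^{-n}\} = \{\tau < (j+1)2^{-n}\}$, which lies in $\F_{(j+1)2^{-n}}$ but not in $\F_{j2^{-n}}$ in general, so $\tau^{(n)}$ fails to be admissible for the discrete problem and the inequality $f_n^{\mu,\nu}(t_n,x) \ge \E^x[\text{payoff}(\tau^{(n)})]$ is unjustified. The standard remedy is to round \emph{up}, $\tau^{(n)} := 2^{-n}\lceil 2^n\tau\rceil$, which is a stopping time with $\tau^{(n)}\downarrow\tau$; right-continuity of $\h X$ then gives $\h X_{\tau^{(n)}}\to\h X_\tau$, and fine continuity of coexcessive functions handles the payoff. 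But rounding up may overshoot $t_n$, so one has to compare against $f_n^{\mu,\nu}$ at horizon $t_n + 2^{-n}$ and then control the one-step discrepancy. Your upper-bound paragraph correctly identifies where the difficulty sits, but the proposed resolution via $\E^x[\mu\h U(\h X_{t_n}) - \mu\h U(\h X_t)]\to 0$ requires \emph{left}-continuity of $s\mapsto (\mu\h U)\h P_s(x)$, which is not provided by coexcessivity alone (that gives right-continuity at $0$ and monotonicity). Neither of these points is fatal, but both need genuine additional argument; the paper sidesteps them entirely by appealing to Proposition~\ref{prop:red1}(3), where the monotone double limit absorbs these issues.
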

\noindent 
Informally, $f^{\mu,\nu}$ is the solution of the  obstacle problem
\begin{align}\label{eq:obsPDE}
  u(0,\cdot) = \mu\h U, \quad \min\left[(\partial_t - \L)u, u - \nu \h U \right] = 0 \qquad \mbox{ on } (0,+\infty) \times E,
\end{align}
where $\L$ is the generator of the dual process $\h X$.
However, to make this rigorous is in general a subtle topic since the obstacle introduces singularities.
Several notions of generalised PDE solutions ranging from variational inequalities to viscosity solutions address this, often together with numerical schemes \cite{barles1991convergence,jakobsen2005continuous,kinderlehrer1980introduction,petrosyan2012regularity}.
This PDE approach to Root's barrier has been carried out in \cite{cox2013root,gassiat2015root} for one-dimensional diffusions.
However, already in the one-dimensional case when the operator involves non-local terms as is the case for many Markov processes, the well-posedness of such obstacle PDEs is an active research area; see e.g.~\cite{barrios2018free,caffarelli2017obstacle}. 
In general, this PDE approach requires stronger assumptions than Assumption~\ref{asn:dual} for the well-posedness of \eqref{eq:obsPDE}; in stark contrast, Corollary~\ref{cor:OST} holds in full generality of Theorem~\ref{thm:main}.

\begin{remark}[Minimal residual expectation]
Item~\eqref{thm:itm:optimal} of Theorem \ref{thm:main} was named \emph{minimal residual expectation} by Rost \cite{rost1976} with respect to $\nu = \mu P_{T_R}$. It implies that  
 $$T_R = \argmin \limits_{S:~ \mu P_S = \nu} \E^\mu [F(S)], \; \mbox{ for any  non-decreasing convex function $F$}.$$
 This is actually an equivalent formulation of the minimal residual expectation property as soon as $(\mu U-\nu U)(E)$ is finite as then this quantity is equal to $\E^\mu[S]$ for all solutions $S$ of \sep. Furthermore, Rost proved in \cite{rost1976} that any stopping time $S$ which is of minimal residual expectation with respect to $\mu P_{T_R}$ necessarily satisfies $S=T_R$ $\P^\mu$-a.s.
\end{remark}

\begin{remark}[Recurrent Markov processes]
 That $\mu U$ and $\nu U$ are \s-finite is a kind of transience assumption, and is usual in this context \cite{falkner1991stopping, rost1976}.
  In the case of one-dimensional Brownian motion or diffusions it is not necessary, see \cite{cox2013root,gassiat2015root}.
  We expect that our result could be extended to the recurrent case (at least in some special cases), but this would require a certain amount of work, see e.g.~\cite{falkner1981distribution} for results for two-dimensional Brownian motion.
\end{remark}
\begin{remark}[Assumptions of Theorem \ref{thm:main}]
From the counterexamples discussed in \cite{falkner1981distribution,falkner1991stopping}, to obtain solutions to {\sep} as non-randomised stopping times, one needs to make:
\begin{enumerate}[(1)]
\item an assumption on the process in order to avoid ``deterministic portions'' in the trajectory. In our case, this is reflected in the assumption of absolute continuity \eqref{eq:asnAC}. This assumption is rather strong but can often be checked in practice. In the case of diffusions, the celebrated H\"ormander's criterion \cite{hormander1967hypoelliptic} gives a simple condition to ensure existence of transition densities with respect to Lebesgue measure. For jump-diffusions, there are also many results providing sufficient criteria for absolute continuity, see for instance \cite{bichteler1987stochastic, picard1996existence}.
\item an assumption on the ``small'' sets charged by initial and target measures (to avoid issues as in the case of multidimensional Brownian motion and Dirac masses). This is why we assume that $\nu$ charges no semipolar sets. Without this assumption, it is not true that there exists a solution to \sep\,  as hitting time of a barrier, or even as an non-randomised stopping time.
In the case where all semipolar sets are polar, following \cite{falkner1983stopped}, we can replace the assumption that $\nu$ charges no (semi)polar set by the assumption that
\begin{equation} \label{eq:falk}
\mbox{there exists a (universally measurable) set $C$ s.t. }\nu(Z) = \mu(Z \cap C), \:\: \mbox{ for all polar }Z.
\end{equation}
Indeed, there exists then a polar set $M \subset C$, and a measure $\gamma$ supported on $M$, $\mu', \nu'$ supported on $M^c$ with $\mu= \mu' + \gamma$, $\nu=\nu' + \gamma$, and $\nu'$ charges no polar sets (cf. \cite[p.50]{falkner1983stopped}). Letting $R'$ be a barrier embedding $\nu'$ into $\mu'$ as given by Theorem \ref{thm:main}, let $R:=R' \cup (\R_+ \times M)$, then $T:= \inf \{t \geq 0, \; (t,X_t) \in R\}$ embeds $\nu$ into $\mu$. 
In \cite{falkner1983stopped} is proven that (if semipolar sets are polar), \eqref{eq:falk} is a necessary condition for a non-randomised solution to \sep\, to exist (in the case where $\mu U \geq \nu U$ but \eqref{eq:falk} does not hold, randomisation of the stopping time at time $0$ is necessary).
\end{enumerate}

\end{remark}

\section{Proof of Theorem~\ref{thm:main}}\label{sec:proof}

The proof of our main result, Theorem \ref{thm:main}, is split into two parts:
\begin{description}
\item[Existence.] 
  We first show that a Root barrier $R$ exists such that $\mu P_{T_R}=\nu$ and that items~\eqref{thm:itm:MRE} and \eqref{thm:itm:optimal} of Theorem~\ref{thm:main} hold.
  Here we rely on classic work of Rost, \cite{rost1976}, that shows that \sep\, has as solution stopping time $T$ that lies between the hitting times of two barriers which differ only by a space-time graph.
We show that these hitting times are necessarily equal; a similar approach was already followed in \cite{beiglbock2017optimal, chacon1986barrier, gassiat2015root} under different assumptions.

\item[Free boundary characterisation.]
  We show item~\eqref{thm:itm:freeBD} of Theorem~\ref{thm:main}, that is that one can take the contact set of the obstacle problem \eqref{eq:obsPDE} as the Root barrier.
  From a conceptual point of view, this is similar to the case of one-dimensional diffusions as studied with PDE methods in \cite{cox2013root,gassiat2015root}.
  However, there the analysis is greatly simplified due to the existence of local times.
  Since local times are not available in our setting, the situation becomes more delicate and requires the analysis of negligible sets via potential theory.
\end{description}

\subsection{Existence}

We prepare the proof of existence and optimality with two lemmas.
The first lemma shows right-continuity of the semigroup when applied to bounded Borel-measurable functions.
\begin{lemma}\label{lem:rightcont semigroup}
Under Assumption \ref{asn:dual}, it holds for all Borel-measurable and bounded functions $f$, for all $x\in E$ and $t>0$
\begin{equation}
\lim_{h\downarrow 0} P_{t+h}f(x) = P_t f(x). \label{eqn:ContWeak} 
\end{equation}
\end{lemma}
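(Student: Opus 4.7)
The plan is to reduce the right-continuity of $h \mapsto P_{t+h}f(x)$ at $h=0$ to the strong $L^1(\xi)$-continuity of the dual semigroup $(\h P_h)_{h\geq 0}$, exploiting both parts of Assumption~\ref{asn:dual}.

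First I would use the semigroup identity $P_{t+h}=P_tP_h$ together with absolute continuity to rewrite
\[P_{t+h}f(x)=\int p_t(x,y)\,P_hf(y)\,\xi(\ddi y)=\int f(y)\,\bigl(p_t(x,\cdot)\,\h P_h\bigr)(y)\,\xi(\ddi y),\]
the second equality being the duality identity~\eqref{eq:dual} applied to $p_t(x,\cdot)$ and the positive and negative parts of $f$. Since $\int p_t(x,y)\,\xi(\ddi y)=P_t1(x)\leq 1$, the function $p_t(x,\cdot)$ lies in $L^1(\xi)$, and therefore
\[\bigl|P_{t+h}f(x)-P_tf(x)\bigr|\;\leq\;\|f\|_\infty\,\bigl\|p_t(x,\cdot)\,\h P_h - p_t(x,\cdot)\bigr\|_{L^1(\xi)}.\]
The claim then reduces to showing $g\,\h P_h\to g$ in $L^1(\xi)$ as $h\downarrow 0$ for every $g\in L^1(\xi)$.

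To verify that $(\h P_h)$ extends to a strongly continuous contraction semigroup on $L^1(\xi)$, I would first observe contractivity: for $g\geq 0$ in $L^1(\xi)$, duality gives $\int g\,\h P_h(y)\,\xi(\ddi y)=\int (P_h1)(y)\,g(y)\,\xi(\ddi y)\leq \|g\|_{L^1(\xi)}$. So it suffices to check strong continuity at $h=0$ on the dense subclass $C_c(E)$ (dense in $L^1(\xi)$ since $E$ is locally compact Polish and $\xi$ is $\sigma$-finite Borel, hence Radon). For $g\in C_c^+(E)$, right-continuity of $\h X$'s paths combined with continuity of $g$ yields $g\h P_h(y)=\h\E^y[g(\h X_h)]\to g(y)$ for every $y$, while $\|g\h P_h\|_{L^1(\xi)}=\int \P^y(h<\zeta)\,g(y)\,\xi(\ddi y)\to \|g\|_{L^1(\xi)}$ by dominated convergence using $\P^y(h<\zeta)\uparrow 1$. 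Scheffé's lemma then promotes these two inputs to $L^1$-convergence; linearity and the contraction bound propagate the conclusion to all of $L^1(\xi)$.

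The main technical point in this plan is the strong $L^1$-continuity of $\h P_h$ at $0$; it is the linchpin coupling the absolute-continuity hypothesis with the right-continuity of paths of the dual standard process. Once established, applying the bound above with $g=p_t(x,\cdot)$ yields the claim for every $x\in E$.
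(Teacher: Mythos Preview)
Your argument is correct and takes a genuinely different route from the paper's. The paper never passes through the dual semigroup acting on $L^1(\xi)$; instead it works directly with the densities $p_s^x:=p_s(x,\cdot)$: it uses de~La~Vall\'ee~Poussin to find a superlinear convex $G$ with $\int G(p_t^x)\,\ddi\xi<\infty$, then Chapman--Kolmogorov together with Jensen to show $\int G(p_{t+h}^x)\,\ddi\xi\le\int G(p_t^x)\,\ddi\xi$, hence $(p_s^x)_{s\ge t}$ is uniformly integrable on each finite-measure piece $E_n$; Dunford--Pettis then yields weak $L^1$ sequential compactness, the limit is identified as $p_t^x$ by testing against continuous functions, and a tail estimate on $E_n^c$ upgrades this to weak $L^1(E,\xi)$-convergence. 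Your approach is shorter and more conceptual: the identity $p_{t+h}(x,\cdot)=p_t(x,\cdot)\h P_h$ reduces everything to strong continuity of the dual semigroup on $L^1(\xi)$, which you obtain from contractivity plus Scheff\'e on $C_c(E)$. You in fact prove more --- strong $L^1$ convergence of the densities rather than weak --- at the cost of using the right-continuity of the dual process $\h X$ (the paper's argument only uses the original paths).

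One small caveat: your justification ``$\sigma$-finite Borel on locally compact Polish, hence Radon'' is not literally true without an extra hypothesis (local finiteness). The paper makes the analogous implicit assumption when it asserts the existence of open $E_n\uparrow E$ with $\xi(E_n)<\infty$; once one grants that, $\xi$ is locally finite and hence Radon, and your density claim for $C_c(E)$ in $L^1(\xi)$ follows. So this is a shared technical wrinkle rather than a defect in your strategy.
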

\begin{proof}
First, note that if $f$ is continuous then by a.s. right continuity of $t\mapsto X_t$ it is clear that $P_t f$ is right continuous as a function of $t$.

Let $p_t^x = p_t(x,\cdot)$. Since $\int_E p_t^x(y)\xi(\ddi y) = 1$, by de La Vall\'ee Poussin's theorem (see e.g. \cite[Thm. II.22]{dellacherie1966probabilities} \footnote{The de La Vall\'ee Poussin's theorem in literature is given in finite measure spaces. That $\xi$ is infinite is clearly not a problem here. If necessary consider the finite measure $f(y)\wedge 1 \xi(\ddi y)$, applying the theorem to that measure gives that for some superlinear $G$, $\int G(f(y) \vee 1) f(y) \wedge 1 \xi(\ddi y)<+\infty$. 
} there exists a function $G$ which is strictly convex and superlinear (i.e. $\lim_{x \to +\infty} G(x)/x = +\infty$) such that 
 \begin{equation}
\int G(p_t^x(y))\xi(\ddi y) <\infty.\label{eq:integrable}
 \end{equation}
Then for all $h \geq 0$ one has 
\begin{align*}
\int G(p_{t+h}^x(y)) \xi(\ddi y) &= \int G\left( \int p_t^x(z) p_h^z(y) \xi(\ddi z)\right) \xi(\ddi y) \\
&\leq \int \int p_h^z(y)  \xi(\ddi z) G\left(p_t^x(z)\right) \xi(\ddi y) = \int G(p_{t}^x(z)) \xi(\ddi z),
\end{align*}
where we first used Kolmogorov-Chapman's equality \eqref{eq:KC}, then Jensen's inequality and that it holds $\int p_h^z(y) \xi(\ddi z) = 1$ by duality. 
Since $\xi$ is $\sigma$-finite, there exists a countable increasing family of open sets $(E_n)_{n\in \N}$ such that $\medcup_{n\in \N}E_n= E$ and $\xi(E_n)<\infty$ for all $n\in \N$. 

Now fix $n\in\N$. On $E_n$ the integrability condition as in \eqref{eq:integrable} is satisfied for all functions in the family $(p_s^x)_{s \geq t}$. By the de La Vall\'ee Poussin's theorem this is equivalent to $(p_s^x)_{s \geq t}$ being uniformly integrable in $L^1(E_n, \xi)$.

Then, by the Dunford-Pettis theorem (see e.g. \cite[Thm. II.23]{dellacherie1966probabilities}), uniform integrability of $(p_s^x)_{s\geq t}$ implies that it is weakly (relatively) compact in the finite measure space $L^1(E_n, \xi)$. By a diagonal argument there exists a subsequence $s_k \downarrow t$ and a measurable function $q$ such that for all $n$, for all bounded and measurable $f$ one has $\int_{E_n} p^x_{s_k} f \dd\xi \to \int_{E_n} q f \dd\xi$ for $k\to\infty$. 
If we take $f$ as a continuous function supported in $E_n$, by right-continuity of the sample paths, we obtain that $q = p_t^x$. In addition, since $E_n^c$ is closed, by a.s. right-continuity of $X$, one has that
$$\limsup_{k\to\infty} P_{s_k}(x,E_n^c) \leq P_t(x,E_n^c).$$
Hence if $f$ is measurable and bounded by $1$, 
$$\limsup_{k\to\infty} \left| \int p^x_{s_k}f \dd\xi-\int p^x_t f \dd\xi \right| \leq \limsup_{k\to\infty} \left| \int_{E_n^c} p^x_{s_k}f \dd\xi - \int_{E_n^c} p^x_t f \dd\xi \right| \leq 2 P_t(x,E_n^c).$$
Letting $n \to \infty$, the right-hand side goes to $0$ by dominated convergence. Hence $p^x_{s_k}$ converges weakly in $L^1(E,\xi)$ to $p^x_t$. We can use the same line of argument for every subsequence of any sequence $s_k\downarrow t$ to argue the convergence of a subsubsequence. Therefore for all $x\in E$ we have that $p^x_{s}$ converges weakly in $L^1(E,\xi)$ to $p^ x_t$ for $s\downarrow t$ which leads to the required statement. 

\end{proof}
The second Lemma revisits Chacon's idea of ``shaking the barrier'', see also~\cite{beiglbock2017optimal, chacon1986barrier} for similar statements under slightly stronger assumptions.
\begin{lemma} \label{lem:RR+}
If the semigroup of a Markov process satisfies \eqref{eqn:ContWeak}, then for all Root barriers $R$ one has almost surely
\begin{equation}
T_R = T_{R^+}=T_{R^-}.
\end{equation}
\end{lemma}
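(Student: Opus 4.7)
The inclusions $R^- \subset R \subset R^+$ immediately give $T_{R^+} \leq T_R \leq T_{R^-}$ almost surely; the content of the lemma is the reverse direction. I observe that it suffices to prove $T_R = T_{R^-}$ for every Root barrier $R$: applying this to the Root barrier $R^+$ yields $T_{R^+} = T_{(R^+)^-}$, and a direct check gives $(R^+)^- \subset R$ (indeed, for $s<t$ one has $R^+_s = \bigcap_{u > s} R_u \subset R_t$, hence $\bigcup_{s<t} R^+_s \subset R_t$). Combined with $R \subset R^+$, one obtains $T_{(R^+)^-} \geq T_R \geq T_{R^+}$, and together with $T_{R^+} = T_{(R^+)^-}$ this forces $T_R = T_{R^+}$ almost surely.

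To prove $T_R = T_{R^-}$, for each $s > 0$ introduce the shifted barrier $R^{-s}$ with sections $R^{-s}_t = R_{t-s}$ (empty for $t<s$). Monotonicity of $t \mapsto R_t$ gives $R^{-s_1} \supset R^{-s_2}$ for $s_1 < s_2$, and together with the identity $R^- = \bigcup_{s \in \Q,\, s>0} R^{-s}$ noted before the lemma, this yields the $\P^\mu$-almost sure monotone convergence $T_{R^{-s}} \searrow T_{R^-}$ as $s \downarrow 0$ along rationals.

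The crux is the strong Markov identity at the deterministic time $s$: a change of variable $u = t-s$ in the infimum defining $T_{R^{-s}}$ shows that on $\{X_s \notin R_0\}$,
\[T_{R^{-s}} = s + T_R \circ \theta_s.\]
For any bounded continuous $\phi$, setting $H_\phi(y) := \E^y[\phi(T_R)]$, the Markov property yields
\[\E^\mu[\phi(T_{R^{-s}}-s)] = \E^\mu[\phi(T_R \circ \theta_s)] = \mu P_s H_\phi,\]
and Lemma~\ref{lem:rightcont semigroup} then delivers $\mu P_s H_\phi \to \mu H_\phi = \E^\mu[\phi(T_R)]$ as $s \downarrow 0$. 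Hence $T_{R^{-s}} - s \to T_R$ in distribution under $\P^\mu$. Combined with the almost sure convergence $T_{R^{-s}} \to T_{R^-}$, we obtain $T_{R^-} \de T_R$; since also $T_{R^-} \geq T_R$ a.s., this forces $T_{R^-} = T_R$ almost surely.

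The main technical obstacle I anticipate is the passage to the limit $s \downarrow 0$ in $\mu P_s H_\phi$, since Lemma~\ref{lem:rightcont semigroup} only provides right continuity at strictly positive $t$. This can be bypassed by writing $\mu P_{s+\eta} H_\phi = (\mu P_\eta)(P_s H_\phi)$ for a small auxiliary $\eta > 0$, applying Lemma~\ref{lem:rightcont semigroup} at $t = \eta$ to obtain $\mu P_{s+\eta} H_\phi \to \mu P_\eta H_\phi$, and then letting $\eta \downarrow 0$ using right continuity of paths together with absolute continuity~\eqref{eq:asnAC}. The measure-zero events $\{X_s \in R_0\}$ arising in the strong Markov identity are absorbed by standard Fubini-type arguments using the same absolute continuity.
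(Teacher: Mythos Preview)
Your overall strategy mirrors the paper's: reduce to $T_R = T_{R^-}$, shift the barrier by $s$, use the Markov property at time $s$, and invoke right-continuity of the semigroup. The reduction via $(R^+)^- \subset R$ is a nice variant of the paper's first step, and your handling of the event $\{X_s \in R_0\}$ is in fact justifiable (the discrepancy lives on $\{X_s \in R_0 \setminus R_0^r\}$, a semipolar and hence $\xi$-null set, and $\mu P_s \ll \xi$), even if ``Fubini-type arguments'' is not the right phrase.

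The genuine gap is the one you yourself flag: you need $\mu P_s H_\phi \to \mu H_\phi$ as $s \downarrow 0$, but Lemma~\ref{lem:rightcont semigroup} only gives $P_{t+h} f \to P_t f$ for $t > 0$. Your proposed bypass is circular. Writing $\mu P_{s+\eta} H_\phi$ and sending $s \downarrow 0$ via Lemma~\ref{lem:rightcont semigroup} at $t=\eta$ yields $\mu P_\eta H_\phi$; but the second limit $\mu P_\eta H_\phi \to \mu H_\phi$ as $\eta \downarrow 0$ is exactly the convergence you were trying to establish in the first place. Right-continuity of paths gives $X_\eta \to X_0$ a.s., but $H_\phi$ is only Borel, so $H_\phi(X_\eta)$ need not converge; absolute continuity of $\mu P_\eta$ does not help here. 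There is also no monotonicity of $s \mapsto \mu P_s H_\phi$ to exploit.

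The paper resolves this by a preliminary truncation that you omit: it first reduces to barriers of the form $R(\delta) = R \cap ([\delta,\infty)\times E)$ via $T_R = \inf_{\delta>0} T_{R(\delta)}$. Once $R$ is empty before time $\delta$, \emph{both} $T_R$ and $T_{R^{-\epsilon}}$ can be written through the \emph{same} functional $T_{R^\delta}$, namely
\[
T_R = \delta + T_{R^\delta}\circ\theta_\delta, \qquad T_{R^{-\epsilon}} = (\delta+\epsilon) + T_{R^\delta}\circ\theta_{\delta+\epsilon},
\]
so that with $f(x) = \E^x[e^{-T_{R^\delta}}]$ one has $\E^x[e^{-T_R}] = e^{-\delta} P_\delta f(x)$ and $\E^x[e^{-T_{R^{-\epsilon}}}] = e^{-(\delta+\epsilon)} P_{\delta+\epsilon} f(x)$. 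Now Lemma~\ref{lem:rightcont semigroup} applies at the strictly positive time $t=\delta$, giving $P_{\delta+\epsilon} f \to P_\delta f$ directly. This is precisely the missing idea: rather than comparing $P_s$ with $P_0$, one compares $P_{\delta+\epsilon}$ with $P_\delta$.
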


\begin{proof}
Firstly, by replacing $R$ with $R^+$ if necessary, it is enough to show that $T_R = T_{R^{-}}$ almost surely.
Secondly, if we define \[R(\delta) := R \cap ([\delta,+\infty) \times E),\] we have $T_R = \inf_{\delta>0} T_{R(\delta)}$.
Put together, this implies that it is sufficient to show that for all $\delta>0$, $T_{R(\delta)}=T_{R^-(\delta)}$ $\P^\mu$-a.s.~and below we assume that $R = R(\delta)$ for a given $\delta>0$.

For $\epsilon \in \R$ define 
\begin{align}
 R^\epsilon :=  \bigcup_{t\geq \max(-\epsilon, 0) } [t,\infty) \times R_{t+\epsilon}.
\end{align}
That is, $R^\epsilon$ is the barrier that arises by shifting $R$ in time to the left if $\epsilon>0$ [resp.~to the right if $\epsilon<0$]. 
Now since $R=R(\delta)$,
\[T_R = T_{R^{\delta}} \circ \theta_{\delta} + \delta\]
and for any $0< \epsilon < \delta$ we also have 
\[T_{R^{-\epsilon}} =  T_{R^\delta} \circ \theta_{\delta+\epsilon} + (\delta+\epsilon).\]

Now set $f(x) := \E^x \left[ \exp\left(-T_{R^{\delta}}\right)\right]$ and use the above identities to deduce that for every $0 < \epsilon < \delta$ and every $x$,
 \[\E^{x} \left[ \exp\left(-T_{R}\right)\right] = e^{-\delta} P_\delta f(x), \;\;\;\E^{x} \left[ \exp\left(-T_{R^{-\epsilon}}\right)\right] = e^{-(\delta+\epsilon)} P_{\delta+\epsilon} f(x).\]
From the right-continuity of the semigroup, Lemma \ref{lem:rightcont semigroup}, it follows that
 \[\lim_{\epsilon \downarrow 0} \E^{x} \left[ \exp\left(-T_{R^{-\epsilon}}\right)\right] = \E^{x} \left[ \exp\left(-T_{R}\right)\right].\]
But since $T_{R^{-\epsilon}}\geq T_R$ $\P^x$-a.s.~for all $x$ and for all $\epsilon >0$, this already implies that 
 \[\lim_{\epsilon \downarrow 0} T_{R^{-\epsilon}} = T_R\quad \P^x\text{-a.s.}\]
and we conclude that $T_R = T_{R^-}$ since $R^- = \bigcup_{\epsilon>0} R^{-\epsilon}.$ 
\end{proof}

For the proof of existence and optimality, we rely on the following result obtained by Rost:
\begin{theorem}[Rost, Theorems 1 and 3 in {\cite{rost1976}}]\label{thm:rost}
If $\mu \prec \nu$, then there exists a (possibly randomised) stopping time $T$ which is of minimal residual expectation with respect to $\nu$, i.e. $\mu P_T=\nu$ and 
\begin{equation}
 T = \argmin \limits_{S:~ \mu P_S = \nu} \mu P_{t\wedge S} U(B) \qquad \text{for any Borel set $B$ and }t\geq 0.\label{eq:mre}
\end{equation}
In addition, the measure
$$ \dd t \otimes  ( \mu P_{t \wedge T} U (\ddi x))$$
is given by the $Q$-r\' eduite of the measure 
\[ \dd t \otimes \left( \mu U(\ddi x)\ind_{\{t \leq 0\}}  + \nu U(\ddi x)\ind_{\{t > 0\}}  \right).\]
Furthermore, there exists a finely closed Root barrier $R$ such that $\P^\mu$-a.s.:
\begin{enumerate}
\item $T \leq T_R:= \inf \left\{ t > 0, \; X_t \in R_{t}\right\} $,\label{thm:itm:TTR}
\item $X_{T} \in R_{T+}$.\label{thm:itm:XTRT+}
\end{enumerate}
\end{theorem}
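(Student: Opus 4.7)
Since Theorem~\ref{thm:rost} is a direct citation of Rost \cite{rost1976}, I would reproduce the core of Rost's potential-theoretic argument on the space-time process $\overline X = (\tau, X)$. The strategy is to construct a candidate measure on $\R \times E$, form its $Q$-r\'eduite, and then read off the embedding stopping time together with its barrier structure from the associated contact set.

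For the construction of $T$, I would consider the measure
\[\alpha := dt \otimes \bigl( \mu U \ind_{\{t \le 0\}} + \nu U \ind_{\{t > 0\}}\bigr)\]
on $\R \times E$. Since $\mu \prec \nu$, $\alpha$ is dominated by the $Q$-excessive measure $dt \otimes \mu U$. Let $\widehat\alpha$ denote its $Q$-r\'eduite, i.e.\ the smallest $Q$-excessive measure dominating $\alpha$; existence follows from lattice properties of excessive measures together with the a priori bound. The central structural input from Rost's theory is that any $Q$-excessive measure dominated by $dt \otimes \mu U$ is of the form $dt \otimes \mu P_{t \wedge T}U$ for some (possibly randomised) stopping time $T$ of $X$. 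Applying this to $\widehat\alpha$ produces a candidate $T$; letting $t \to \infty$ and using $\sigma$-finiteness of $\mu U$ and $\nu U$ (together with the fact that $\widehat\alpha$ agrees with $dt \otimes \nu U$ on an unbounded set of $t$'s) forces $\mu P_T U = \nu U$, which is equivalent to $\mu P_T = \nu$ by the balayage characterisation \eqref{eq:balayage}.

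For minimal residual expectation, any competitor $S$ with $\mu P_S = \nu$ produces the $Q$-excessive measure $dt \otimes \mu P_{t \wedge S} U$, which also dominates $\alpha$; minimality of $\widehat\alpha$ then gives $dt \otimes \mu P_{t\wedge T} U \leq dt \otimes \mu P_{t \wedge S}U$ as measures on $\R \times E$. Specialising to $[0,t] \times B$ and differentiating in $t$ yields \eqref{eq:mre}. For the barrier, I would take $R$ as the contact set where the density of $\widehat\alpha$ with respect to $dt \otimes \xi$ agrees with that of $dt \otimes \nu U$. Monotonicity $R_s \subset R_t$ for $s \le t$ (the barrier property) is a consequence of $\nu U$ being $U$-excessive: once the r\'eduite has collapsed to $\nu U$ at $(t,x)$, running forward in time keeps it pinned there. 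Fine closedness of $R$ with respect to $\overline X$ is inherited from fine continuity of excessive functions. Finally, $T \le T_R$ holds because on $\{t<T\}$ the r\'eduite is strictly above $\nu U$ at $(t,X_t)$, and $X_T \in R_{T+}$ follows from right-continuity of paths together with the definition of $R^+$.

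The genuinely difficult step is identifying an abstract $Q$-excessive measure dominated by $dt \otimes \mu U$ as the occupation measure $dt \otimes \mu P_{t\wedge T}U$ of a single (possibly randomised) stopping time of $\mu$. This is Rost's representation theorem; it is proved via Hunt's balayage machinery on the space-time process, and passage to randomisation is essential in the absence of additional regularity on $\nu$. Everything else (optimality, monotonicity, closedness) is then a formal consequence of the lattice structure of excessive measures and the duality in Assumption~\ref{asn:dual}. The non-randomisation of $T$ into a genuine hitting time $T_R$ is precisely the subject of the remainder of Section~\ref{sec:proof}, which builds on Lemmas~\ref{lem:rightcont semigroup} and~\ref{lem:RR+} and the ``no semipolar charge'' hypothesis on $\nu$.
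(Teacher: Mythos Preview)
The paper does not prove Theorem~\ref{thm:rost}; it quotes it from Rost and only records the two key ingredients: the filling scheme from \cite{rost1971} for the existence of an optimal $T$, and a paths-swapping argument for items~\eqref{thm:itm:TTR} and~\eqref{thm:itm:XTRT+}. Your sketch of the first half (existence, optimality, identification of $\dd t\otimes \mu P_{t\wedge T}U$ with the $Q$-r\'eduite) is in line with this, and you correctly isolate the representation of a $Q$-excessive measure below $\dd t\otimes\mu U$ as the genuinely hard input.

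The gap is in your treatment of the barrier properties~\eqref{thm:itm:TTR} and~\eqref{thm:itm:XTRT+}. You define $R$ as the contact set of the r\'eduite and then assert that ``on $\{t<T\}$ the r\'eduite is strictly above $\nu U$ at $(t,X_t)$'' and that ``$X_T\in R_{T+}$ follows from right-continuity of paths''. Neither of these follows from what you have. The r\'eduite density $\mu P_{t\wedge T}\h U(x)$ is a deterministic function of $(t,x)$; knowing that $t<T(\omega)$ for a particular trajectory tells you nothing about whether $\mu P_{t\wedge T}\h U$ equals or exceeds $\nu\h U$ at the specific point $X_t(\omega)$, since $\mu P_{t\wedge T}$ averages over all trajectories. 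Likewise, right-continuity of $s\mapsto X_s$ gives no reason for $X_T$ to land in the contact set.

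Rost obtains~\eqref{thm:itm:TTR} and~\eqref{thm:itm:XTRT+} by a different route: a path-swapping (coupling) argument showing that if two trajectories meet at a point $(s,x)$ with one already stopped and the other not, one can exchange their continuations to strictly decrease the residual expectation, contradicting optimality of $T$. This is what forces the barrier structure, and it is not recoverable from the contact-set description alone. In fact, the paper's own Section~\ref{sec:proof} uses exactly the contact-set barrier $\tilde R$ you propose, but it proves $T_{\tilde R}=T_R$ \emph{starting from} Rost's items~\eqref{thm:itm:TTR} and~\eqref{thm:itm:XTRT+} (see Propositions~\ref{prop:leq} and~\ref{prop:geq}); using the contact set to establish those items would make the argument circular.
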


The key ingredients in the proof of Rost's theorem are the filling scheme from \cite{rost1971}, which allows to obtain the existence of $T$ satisfying the optimality property \eqref{eq:mre}, and then a paths-swapping argument (see \cite{hobson2011skorokhod} for a heuristic description), which shows that $T$ is almost the hitting time of a Root barrier (i.e. \eqref{thm:itm:TTR} and \eqref{thm:itm:XTRT+} above). However, this does not imply that $T$ is the hitting time of a Root barrier. 
%
%

In order to conclude item \eqref{thm:itm:MRE} from Theorem \ref{thm:main}, we first see that Lemma \ref{lem:RedMtoF} yields
\begin{equation}\label{eq:equivae}
f^{\mu,\nu} (t,x)= \mu P_{t \wedge T} \h{U}(x), \;\;\;\;\dd t \otimes \xi(\ddi x) \mbox{-a.e.}
\end{equation}
We will prove in Lemma \ref{lem:Properties} that $f^{\mu,\nu}$ is $\h Q$-excessive. Therefore, if we show that $g(t,x):= \mu P_{t\wedge T}\h U(x)$ is $\h Q$-excessive then \eqref{eq:equivae} holds everywhere. For this we need to show that $g$ satisfies $g \h{Q}_t \to g$ as $t \to 0$. But this follows from the definition since
\[\liminf_{t \to 0} \, (g \h{Q}_t)(s,\cdot) =\liminf_{t \to 0} g(t-s, \cdot)\h P_t =  \liminf_{t \to 0}\mu P_{(s-t) \wedge T} \h{U} \h{P}_t \geq \liminf_{t \to 0} \mu P_{s \wedge T} \h{U} \h{P}_t = \mu P_{s \wedge T} \h{U}.\]
Secondly, from $f^{\mu,\nu} (t,x)= \mu P_{t \wedge T} \h{U}(x)$, it also follows from Theorem 1 in \cite{rost1976} that $T$ is the unique stopping time minimising $\mu P_{t\wedge T }\h U$ for all $t\geq 0$ among all stopping times embedding $\nu$ in $\mu$.

Furthermore, $X_{T} \in R_{T+}$ in \eqref{thm:itm:XTRT+} from Theorem \ref{thm:rost} by Rost implies 
$T \geq  T_{R^+} :=\inf \left\{ t > 0, \; X_t \in R_{t+}\right\}$ on $\{T>0\}$. If $T=0$ we have $X_0 \in R_{0+}$ and if $X_0 \in R_{0+}^r$ then $T_{R^+}=0$, so that combined we get
\begin{equation*}
\P^\mu(T=0<T_{R^+}) \; \leq \P^\mu(X_T \in R_{0+} \setminus R_{0+}^r) \; =\; \nu(R_{0+} \setminus R_{0+}^r) \;= \; 0,
\end{equation*}
where we used that $R_{0+} \setminus R_{0+}^r$ is semipolar and that by assumption $\nu$ charges no semipolar sets. Hence combined with item \eqref{thm:itm:XTRT+} from Theorem \ref{thm:rost}, one has $T_{R^+} \leq T \leq T_R$, and we can conclude the existence of a solution satisfying items \eqref{thm:itm:MRE} and \eqref{thm:itm:optimal} in Theorem \ref{thm:main} with Lemma \ref{lem:RR+}.

\subsection{Free boundary characterisation}

Let $T=T_R$ be the unique Root stopping time solving \sep\, from the previous section with the respective Root barrier $R$.
We want to prove $T=\tilde{T}$ with $\tilde{T} := T_{\tilde{R}}$, where $\tilde{R}$ is defined as in Theorem \ref{thm:main}
\[\tilde{R} := \left\{ (t,x) \in \R\times E~|~f^{\mu,\nu} (t,x) = \nu \h U (x)\right\}.\]
The proof is split into two inequalities given in Proposition \ref{prop:leq} and Proposition \ref{prop:geq}.
First, we show some useful properties of the Root barrier $\tilde R$:

\begin{lemma}\label{lem:Properties} The function $f^{\mu,\nu}$ and the resulting Root barrier $\tilde{R}$ satisfy the following properties:
\begin{enumerate}
\item\label{itm:q exc} $f^{\mu,\nu}$ is $\h Q$-excessive and non-increasing in $t$,
\item \label{itm: fclosed}$\tilde{R}$ is a Borel-measurable and $\h Q$-finely closed Root barrier.
\end{enumerate}
\end{lemma}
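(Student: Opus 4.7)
The plan is to prove item~\eqref{itm:q exc} first and then read off item~\eqref{itm: fclosed} almost for free.

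For $\h Q$-excessiveness of $f^{\mu,\nu}$, I would start by observing that any constant-in-time lift $(t,x)\mapsto \phi(x)$ satisfies $(\phi\h Q_r)(t,x)=\phi\h P_r(x)$, so $\h Q$-excessiveness is inherited directly from $\h P$-excessiveness (i.e.\ from coexcessiveness of $\phi$). Applied to $\phi=\mu\h U$ and $\phi=\nu\h U$, this gives two $\h Q$-excessive candidates, and the balayage order $\mu\h U\ge \nu\h U$ makes $\mu\h U$ a finite ($\xi$-a.e.) $\h Q$-excessive majorant of
\[
h(t,x) := \mu \h U(x) \ind_{\{t \leq 0\}} + \nu \h U(x) \ind_{\{t > 0\}},
\]
so the class in~\eqref{eq:reprred} is non-empty and $f^{\mu,\nu}\le \mu\h U$. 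That the pointwise infimum is actually $\h Q$-excessive, and not just supermedian, is exactly the content of the classical reduite theory recalled in Appendix~\ref{app:red}: $h$ is sandwiched between two $\h Q$-excessive functions, which is the regularity one needs to identify the pointwise infimum with the excessive regularisation (no separate regularisation step is required).

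For the monotonicity in $t$, my plan is a shift argument. Fix $s\ge 0$ and let $g$ be any $\h Q$-excessive majorant of $h$. The shifted function $g_s(t,x):=g(t-s,x)$ is again $\h Q$-excessive, since
\[
(g_s\h Q_r)(t,x) \;=\; (g\h Q_r)(t-s,x) \;\le\; g(t-s,x) \;=\; g_s(t,x),
\]
with right-continuity at $r=0$ inherited from $g$. A short case distinction splitting on $t\le s$ vs.~$t>s$, using $\mu\h U\ge\nu\h U$ and the fact that $g\ge h$, shows $g_s\ge h$, so $g_s$ is itself a $\h Q$-excessive majorant of $h$. Taking the infimum over $g$ yields $f^{\mu,\nu}(t,x)\le f^{\mu,\nu}(t-s,x)$, which is the desired monotonicity.

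With item~\eqref{itm:q exc} in hand, item~\eqref{itm: fclosed} follows. Borel measurability of $\tilde R$ comes from Remark~\ref{rem:Borel}: the $\h Q$-excessive function $f^{\mu,\nu}$ and the coexcessive function $\nu\h U$ are both Borel, hence so is their coincidence set. The barrier property is a direct consequence of monotonicity: since $f^{\mu,\nu}\ge h\ge \nu\h U$ on $\R_+\times E$, for $s>t$ the sandwich
\[
\nu\h U(x)\;\le\; f^{\mu,\nu}(s,x)\;\le\; f^{\mu,\nu}(t,x)\;=\;\nu\h U(x)
\]
forces $(s,x)\in\tilde R$ whenever $(t,x)\in\tilde R$. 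Finally, $\h Q$-excessive functions are $\h Q$-finely continuous, so $f^{\mu,\nu}-\nu\h U$ is $\h Q$-finely continuous wherever it is finite, and its non-negative zero set $\tilde R$ is therefore $\h Q$-finely closed.

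The main obstacle is the excessiveness statement in~\eqref{itm:q exc}: a priori, the pointwise infimum of excessive functions is only $\h Q$-supermedian, so some input from the reduite theory of Appendix~\ref{app:red} is unavoidable. Once that is accepted, the monotonicity reduces to a transparent shift, and every claim in~\eqref{itm: fclosed} is an immediate consequence of~\eqref{itm:q exc} and the general fact that excessive functions are finely continuous.
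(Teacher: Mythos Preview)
Your overall architecture matches the paper's: establish $\h Q$-excessiveness and time-monotonicity of $f^{\mu,\nu}$, then read off measurability, the barrier property, and fine closedness of $\tilde R$. Your shift argument for monotonicity is correct and more explicit than the paper's one-line ``$f^{\mu,\nu}$ is non-increasing since $h$ is''; your treatment of item~\eqref{itm: fclosed} is essentially identical to the paper's.

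There is, however, a gap in your justification of $\h Q$-excessiveness. You write that $h$ being ``sandwiched between two $\h Q$-excessive functions'' is ``the regularity one needs'' to invoke the r\'eduite theory of Appendix~\ref{app:red}. But the hypothesis of Proposition~\ref{prop:red1} is that $h$ be \emph{finely lower semicontinuous}, and a sandwich $\nu\h U\le h\le \mu\h U$ between finely continuous functions does not by itself imply this (a function can jump down while remaining between two continuous bounds). The paper closes this gap directly: it checks that $t\mapsto h(s-t,\h X_t)$ is almost surely right-continuous for every starting point $(s,x)$, using that $\mu\h U$ and $\nu\h U$ are $\h P$-finely continuous on the two time-halves and that the single discontinuity of $t\mapsto h(s-t,x)$ at $t=s$ is a right-continuous (upward) jump because $\mu\h U\ge\nu\h U$. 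This establishes $\h Q$-fine continuity of $h$, after which Proposition~\ref{prop:red1} applies. Your sandwich observation is relevant input for this verification (it is precisely $\mu\h U\ge\nu\h U$ that makes the jump go the right way), but it is not a substitute for it.
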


\begin{proof} For~\eqref{itm:q exc}, note that any function $f:\R\times E\to \R$ is $\h Q$-finely continuous if and only if the process $t\mapsto f(s-t, \h X_t)$ is $\P^{\delta_{(s,x)}}$-a.s. right continuous for all $(s, x)\in \R\times E$ (see e.g. \cite[Theorem (4.8)]{blumenthal2007markov}) .
As in the obstacle $h(t,x) =\mu\h U(x) \ind_{\{t \leq 0\}} + \nu \h U(x) \ind_{\{t >0\}}$, we have the $\h P$-finely continuous functions $\mu \h U$ and $\nu\h U$ making $t\mapsto\mu\h U(\h X_t)$ (when $t< s$) and $t\mapsto\nu\h U(\h X_t)$ (when $t> s$) $\P^{\delta_x}$-a.s. right-continuous for all $x\in E$. Furthermore, $t\mapsto h(s-t,x)$ is right continuous at $t=s$ which together makes $h$ $\h Q$-finely continuous. By Proposition \ref{prop:red1} it then follows that $f^{\mu,\nu}$ is $\h Q$-excessive.
 Further, $f^{\mu,\nu}$ is non-increasing in $t$ since $h$ is non-increasing.

For~\eqref{itm: fclosed}, note that the $\h Q$-excessive function $f^{\mu,\nu}$ is Borel-measurable, see Remark \ref{rem:Borel}, and the barrier $\tilde{R}$ is a level set of the Borel-measurable function $(t,x) \mapsto f^{\mu,\nu}(t,x) -\nu \h U(x)$, hence it is Borel-measurable. 
Therefore $\tilde{R}$ is $\h Q$-finely closed since it is the set where the two finely-continuous functions $f^{\mu,\nu}$ and $h$ coincide, and it is a barrier by time monotonicity of $f^{\mu,\nu}$.

\end{proof}

\begin{proposition}\label{prop:leq}
$\tilde{T} \leq T$.
\end{proposition}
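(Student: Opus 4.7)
The strategy is to prove $\tilde{T} \leq T$ by showing that Rost's barrier $R$ is contained in $\tilde{R}$ up to a set that is negligible for hitting times. The starting point is the identity $f^{\mu,\nu}(t,x) = \mu P_{t\wedge T}\h U(x)$ established in the existence part. Splitting $\mu P_{t\wedge T}$ according to whether $T \leq t$ or $T > t$, and using $\mu P_T = \nu$, one obtains the key representation
\[
f^{\mu,\nu}(t,x) - \nu\h U(x) = \E^\mu\!\left[\ind_{\{T > t\}}\bigl(u(X_t,x) - u(X_T,x)\bigr)\right],
\]
so that $(t,x) \in \tilde{R}$ precisely when this expectation vanishes.

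Next I would apply the strong Markov property of the space-time process at time $t$: on $\{T > t\}$ the residual time $T - t$ is the first hitting time of the shifted barrier $\{(s, y) : (t+s,y)\in R\}$ by the space-time process started at $(t, X_t)$. Hunt's switching formula (Proposition~\ref{prop:hsf}), applied in the space-time duality described in Section~\ref{sec:notation}, translates the correction term into a statement about the dual space-time process hitting a time-reversed barrier. A careful computation then identifies the vanishing of the correction term with the condition that $x$ lies in the set of coregular points of the barrier section at time $t$. Since the set of non-coregular points of a nearly Borel set is cothin, hence semipolar, we conclude that $R \subseteq \tilde{R}$ up to a semipolar exceptional set.

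Finally, because $\tilde{R}$ is Borel-measurable and $\h{Q}$-finely closed (Lemma~\ref{lem:Properties}), because $\nu$ charges no semipolar set by assumption, and because $T = T_R = T_{R^+}$ by Lemma~\ref{lem:RR+}, the semipolar exceptional set contributes nothing to the trajectory $(s, X_s)_{s \leq T}$ under $\P^\mu$. Passing to $R^\pm$ as in the existence argument and using the fine closedness of $\tilde{R}$, we obtain $T_{\tilde{R}} \leq T_R = T$ $\P^\mu$-almost surely.

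The main obstacle will be the application of Hunt's switching formula in the space-time setting: one has to correctly identify the dual of the hitting time of the shifted Root barrier and check that the resulting identity yields pointwise coincidence of the two potential functions on the coregular part of $R$. The exceptional semipolar sets at the boundary of the barrier must then be absorbed using the hypothesis on $\nu$ and the regularisation machinery from Lemma~\ref{lem:RR+}.
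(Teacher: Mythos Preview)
Your overall strategy matches the paper's: write
\[
f^{\mu,\nu}(s,y) - \nu\h U(y) = \E^\mu\!\left[\ind_{\{s\le T\}}\bigl(u(X_s,y) - u(X_T,y)\bigr)\right],
\]
show this vanishes whenever $y$ is coregular for the section $R_s$, and then absorb the semipolar defect $R_s\setminus {}^rR_s$ using the assumption on $\nu$ together with Lemma~\ref{lem:RR+}. The paper's conclusion is exactly $X_T \in \tilde R_{T+}$ a.s., hence $T_{\tilde R^+}\le T$.

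The difference is in how you propose to prove the vanishing step. You want to apply Hunt's switching formula in the \emph{space-time} duality to the hitting time of the shifted barrier. The paper instead stays purely \emph{spatial}: from the barrier property one has $T \le s + T_{R_s}\circ\theta_s$ on $\{s\le T\}$, so by the Markov property
\[
\E^\mu\!\left[\ind_{\{s\le T\}}\bigl(u(X_s,y) - u(X_T,y)\bigr)\right]
\le \E^\mu\!\left[\ind_{\{s\le T\}}\bigl(u(X_s,y) - P_{R_s}u(X_s,y)\bigr)\right],
\]
and now the ordinary spatial switching identity $P_{R_s}u(\cdot,y)=u(\cdot,y)$ for $y\in{}^rR_s$ finishes immediately. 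This shortcut is worth noting: the quantity you need to control is a \emph{spatial} potential difference, so routing it through the space-time potential kernel and a time-reversed barrier is possible but adds a layer of bookkeeping that the barrier monotonicity lets you avoid entirely. Your ``careful computation'' would in the end have to reproduce exactly this spatial reduction.
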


\begin{proof}
Since $\tilde{T} = T_{\tilde{R}} = T_{\tilde{R}^+}$ by Lemma \ref{lem:RR+}, we only need to prove $T_{\tilde{R}^+}\leq T$. 
Since $\mu U$ is $\sigma$-finite, $\mathcal{N} = \{ \mu\h U = \infty \}$ is polar (cf.~\cite[(3.5)]{blumenthal2007markov}).
Let $(s,y)$ be such that $y \in {}^r R_{s}$ and $y \notin \mathcal{N}$.
One has
\begin{align}
0\leq \mu P_{s\wedge T}\h U(y) - \mu P_T\h U(y) 
&=\E^\mu\left[\ind_{\{s\leq T\}}\big(u(X_s, y) -u(X_T, y)\big)\right]\label{eqn:leq1}
\end{align}
and since $T\leq s + T_{R_s}\circ \theta_s$ on $\{s\leq T \}$ as $s\mapsto R_s$ is non-decreasing, we can apply the Markov property to obtain
\begin{align*}
\eqref{eqn:leq1} &\leq \E^\mu\left[\ind_{\{s\leq T\}}\big(u(X_s, y) -P_{R_s}u(X_s, y)\big)\right].
\end{align*}
By the switching identity (Proposition \ref{prop:hsf}) and since $y\in {}^rR_s$ we have
\begin{align}
P_{R_s}u(x, y) = \E^x\big[u(X_{T_{R_s}}, y)\big]= \h\E^y\big[u(x,\widehat X_{\widehat T_{R_s}})\big] =u(x,y)
\end{align}
for all $x \in E$ and hence 
\begin{equation}
\mu P_{s\wedge T}\h U(y) = \mu P_T\h U(y) =\nu\h U(y).
\end{equation}
Thus, we can conclude that $(s,y) \in \tilde{R}$.

Now for any $\varepsilon >0$, if $t <q < t+\varepsilon$, $R_t \setminus {}^r R_{t+\varepsilon} \subset R_q  \setminus  {}^r R_q$. Since $\bigcup_{q \in \Q} R_q  \setminus  {}^r R_q$ is semipolar, and since $\nu$ charges no semipolar sets, it follows that a.s. $X_T \in {}^r R_{T+\varepsilon} \setminus \mathcal{N}$. By the previous paragraph, this means that $X_T \in \bigcap_{\varepsilon >0} \tilde{R}_{T+\varepsilon} = \tilde{R}_{T+}$. Hence $T_{\tilde{R}^+} \leq T$.
\end{proof}  

Before we prove the inverse inequality, we first need a preliminary lemma:

\begin{lemma} \label{lem:TTR2}
Assume that for some measure $\eta$ which charges no semipolar sets, some stopping time $\tau$ and some nearly Borel-measurable set $A$ one has $\eta \h{U} = \eta P_\tau \h{U}$ on $A$. Then $\tau \leq T_A$, $\P^\eta$ almost surely.
\end{lemma}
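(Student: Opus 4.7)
The plan is to show $\sigma := \tau \wedge T_A$ equals $\tau$ $\P^\eta$-almost surely; since $\sigma \leq T_A$ by definition, this yields $\tau \leq T_A$ $\P^\eta$-a.s. To begin, I would use the strong Markov identity $\eta P_s U(B) = \E^\eta\big[\int_s^\infty \ind_B(X_r)\dd r\big]$ to observe that $s \mapsto \eta P_s U$ is non-increasing in the stopping time $s$; passing to $\xi$-densities gives $\eta\h{U}(y) \geq \eta P_\sigma\h{U}(y) \geq \eta P_\tau\h{U}(y)$ pointwise. Together with the hypothesis $\eta\h{U} = \eta P_\tau\h{U}$ on $A$, this forces all three functions to agree on $A$.

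Next I would decompose the two laws along the event $\{T_A < \tau\}$:
\[
\eta P_\sigma = \lambda_1 + \lambda_2, \qquad \eta P_\tau = \lambda_1' + \lambda_2,
\]
with $\lambda_1 := \E^\eta[\ind_{\{T_A < \tau\}}\delta_{X_{T_A}}]$, $\lambda_1' := \E^\eta[\ind_{\{T_A < \tau\}}\delta_{X_\tau}]$, and $\lambda_2 := \E^\eta[\ind_{\{T_A \geq \tau\}}\delta_{X_\tau}]$. The key point is that $\lambda_1$ is supported on $A$ (since $X_{T_A}\in A$ by nearly-Borel measurability of $A$), and $(\lambda_1 - \lambda_1')\h{U} = \eta P_\sigma\h{U} - \eta P_\tau\h{U}$ is non-negative on $E$ and vanishes on $A$; in particular $\lambda_1'\h{U} = \lambda_1\h{U}$ on $\mathrm{supp}\,\lambda_1 \subseteq A$. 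Since $\lambda_1'\h{U}$ is coexcessive, the domination principle for coexcessive potentials (valid under Assumption \ref{asn:dual}, see e.g.\ \cite[VI.1.17]{blumenthal2007markov}) propagates this to $\lambda_1'\h{U} \geq \lambda_1\h{U}$ on all of $E$. Combined with the reverse inequality, $\lambda_1\h{U} = \lambda_1'\h{U}$ everywhere, and injectivity of the potential operator on measures (a consequence of strong duality) gives $\lambda_1 = \lambda_1'$, hence $\eta P_\sigma = \eta P_\tau$.

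Finally, applying $U$ to both sides and subtracting would yield $\E^\eta\big[\int_\sigma^\tau \ind_E(X_r)\dd r\big] = 0$, which forces $\sigma\wedge\zeta = \tau\wedge\zeta$ $\P^\eta$-a.s.; together with $\sigma \leq \tau$ this is exactly $\tau \leq T_A$ $\P^\eta$-a.s. The main obstacle I anticipate is the domination-principle step: I must confirm that $\lambda_1$ inherits from $\eta$ the property of not charging semipolars (a standard but non-trivial fact about hitting laws of standard processes), so that the pointwise equality on $A$ genuinely holds on $\mathrm{supp}\,\lambda_1$ and can be propagated via coexcessive domination to all of $E$, free of the exceptional copolar sets that typically intervene in potential-theoretic domination arguments.
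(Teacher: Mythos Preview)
Your strategy is natural, but it breaks at the claim ``$\mathrm{supp}\,\lambda_1 \subseteq A$ since $X_{T_A}\in A$''. For a nearly Borel set $A$ one only has $X_{T_A}\in A\cup A^r$ (the \emph{fine} closure) on $\{T_A<\zeta\}$; already for Brownian motion hitting an open ball from outside, $X_{T_A}$ lands on the boundary sphere, hence in $A^c$. So $\lambda_1$ is carried by $A\cup A^r$, not by $A$. On the other hand, the equality $\lambda_1\h{U}=\lambda_1'\h{U}$ you establish on $A$ extends, by cofine continuity of coexcessive functions, only to the \emph{cofine} closure $A\cup {}^rA$. Since the fine and cofine closures differ in general, the premise of the domination principle --- that the coexcessive function $\lambda_1'\h{U}$ dominates $\lambda_1\h{U}$ on a set carrying $\lambda_1$ --- is simply not verified. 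Your proposed remedy, that $\lambda_1$ inherits from $\eta$ the property of not charging semipolar sets, is not a routine fact about hitting distributions, and even granting it would only place $\lambda_1$ on $A^r$, which still need not lie inside $A\cup{}^rA$.

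The paper's proof sidesteps this fine/cofine mismatch. Rather than a domination argument for $\lambda_1$, it applies the dual hitting operator: since $\eta\h{U}$ and $\eta P_\tau\h{U}$ are coexcessive and agree on $A$, they agree on the cofine closure, which is exactly where $\h{P}_A$ is supported; hence $\eta P_\tau\h{U}\geq \eta P_\tau\h{U}\,\h{P}_A = \eta\h{U}\,\h{P}_A = \eta P_A\h{U}$ by Hunt's switching identity. This yields the balayage order $\eta P_A\succ \eta P_\tau$, after which Rost's theorem \cite{rost1971} produces a stopping time $\tau'\geq\tau$ with $\eta P_{\tau'}=\eta P_A$; a short argument (using only that $\eta$ charges no semipolar sets, so that $T_A=D_A$ under $\P^\eta$) then forces $\tau'=T_A$, and hence $\tau\leq T_A$.
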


\begin{proof}
We first write
\[
\eta P_\tau \h{U} \geq \eta P_\tau \h{U}  \h{P}_A = \eta  \h{U}  \h{P}_A    = \eta P_A \h{U},
\]
where we have used in the first inequality that $\eta P_\tau \h{U} $ is coexcessive and in the following equality, that the coexcessive functions $\eta  \h{U}$ and $\eta  P_\tau \h{U}$ coincide on $A$ and therefore also on its cofine closure on which $\h{P}_A$ is supported. The last equality follows by the switching identity.

Therefore it holds that $ \eta P_A U \leq  \eta P_\tau U $, i.e.~the measures $\eta P_A$ and $\eta P_\tau$ are in balayage order.
We then follow the proof of \cite[Lemma p.8]{rost1976}. By \cite{rost1971}, since $\eta P_A \succ \eta P_\tau$, there exists a stopping time $\tau'$ (possibly on an enlarged probability space) which is later than $\tau$ such that the process arrives in the measure $\eta P_A$ at time $\tau'$, i.e. $\tau' \geq \tau$ $\P^\eta$-a.s. and $\eta P_{\tau'} = \eta P_A$. We can assume without loss of generality that $A$ is finely closed and then this implies that $\P^\eta (X_{\tau'}\in A)=1$. In particular, if $D_A := \inf\{t\geq 0,~ X_t \in A\}$ 
, then we have $\tau' \geq D_A$ $\P^\eta$-a.s. However, since $\eta(A \setminus A^r) = 0$ it holds that $T_A=D_A$ $\P^\eta$-a.s., so that $\tau' \geq T_A$. Since $\tau' > T_A$ would be a contradiction to $\eta P_{\tau'}U = \eta P_A U$, we conclude that $T_A=\tau'$, and therefore $T_A \geq \tau$ $\P^\eta$-almost surely.\end{proof}

\begin{proposition}\label{prop:geq}
$\tilde{T} \geq T$.
\end{proposition}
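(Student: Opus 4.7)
The plan is to apply the preliminary Lemma~\ref{lem:TTR2} to each time-slice of the barrier $\tilde R$ and then to upgrade the resulting slicewise inequalities to the pathwise comparison $T\le\tilde T$ via the monotonicity of the sections $s\mapsto\tilde R_s$.

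First, fix $s>0$. By item~\eqref{thm:itm:MRE}, already proved in the existence part, $f^{\mu,\nu}(s,x)=\mu P_{s\wedge T}\h U(x)$; by definition of $\tilde R$ this equals $\nu\h U(x)=\mu P_T\h U(x)$ for every $x\in \tilde R_s$. Splitting both sides over $\{T\ge s\}$ and $\{T<s\}$ and applying the strong Markov property at $s$, the $\{T<s\}$-contributions cancel, yielding
\[
\eta_s\h U=\eta_s P_{\tau_s}\h U\qquad\text{on }\tilde R_s,
\]
where $\eta_s(\dd z):=\P^\mu(X_s\in\dd z,\,T\ge s)$ and $\tau_s:=\inf\{t\ge 0:\,X_t\in R_{s+t}\}$ is the hitting time of the shifted Root barrier $R^{s}$.

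Next, I would invoke Lemma~\ref{lem:TTR2} with $(\eta,\tau,A)=(\eta_s,\tau_s,\tilde R_s)$. Nearly-Borel measurability of $\tilde R_s$ is part of Lemma~\ref{lem:Properties}, and the hypothesis that $\eta_s$ charge no semipolar set follows from $\eta_s\ll\mu P_s\ll\xi$ (by~\eqref{eq:asnAC}) combined with the standard fact that in a strongly dual setting with absolutely continuous semigroups $\xi$ itself does not charge semipolar sets (see~\cite[Ch.~VI]{blumenthal2007markov}). The lemma then yields $\tau_s\le T_{\tilde R_s}$ $\P^{\eta_s}$-a.s., which via the strong Markov property transfers to
\[
T-s\le T_{\tilde R_s}\circ\theta_s\quad\text{on }\{T\ge s\},\ \P^\mu\text{-a.s.}
\]

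To conclude $T\le\tilde T$ I would let $s$ range over the positive rationals and combine these slicewise bounds with the placement $X_{\tilde T}\in \tilde R^+_{\tilde T}$, the $\h Q$-fine closedness of $\tilde R$ from Lemma~\ref{lem:Properties}, and the strong Markov property at $\tilde T$; the boundary event $\{\tilde T=0\}$ would be handled by a separate semipolar argument paralleling the one used in the existence proof, using that $\nu$ charges no semipolar set. The main obstacle I expect is precisely this last pathwise step: since the sectional inclusion $\tilde R_s\subset\tilde R_r$ for $s<r$ goes the ``wrong'' way for a naive barrier-monotonicity argument, one has to combine the slicewise bound with the fine-potential-theoretic structure of $\tilde R$ and with the regularity properties granted by absolute continuity~\eqref{eq:asnAC} -- which is exactly the setting Lemma~\ref{lem:TTR2} and Assumption~\ref{asn:dual} were designed to make tractable.
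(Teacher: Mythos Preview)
Your slicewise application of Lemma~\ref{lem:TTR2} is essentially the paper's argument, with the cosmetic variation that you restrict to $\{T\ge s\}$ and use $\eta_s$, whereas the paper keeps the full measure $\mu P_{t\wedge T}$ and first establishes $\mu P_T=\mu P_{t\wedge T}P_{T^t}$; both routes deliver the key inequality $T\le t+T_{\tilde R_t}\circ\theta_t$ for every rational $t>0$.

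The gap is in the final pathwise step, which you leave as an ``obstacle'' and propose to attack with fine closedness of $\tilde R$, the placement $X_{\tilde T}\in\tilde R^+_{\tilde T}$, and the strong Markov property at $\tilde T$. None of this is needed, and your worry that the monotonicity goes the ``wrong'' way is misplaced. The slicewise bound says precisely that for each rational $t>0$ the process does not visit $\tilde R_t$ on the interval $(t,T)$. Hence for any $s\in(0,T)$ and any rational $t$ with $0<t<s$ one has $X_s\notin\tilde R_t$. Now the monotonicity $r\mapsto\tilde R_r$ is exactly what gives
\[
\tilde R_s^-=\bigcup_{r<s}\tilde R_r\subset\bigcup_{t<s,\;t\in\Q_+}\tilde R_t,
\]
so $X_s\notin\tilde R_s^-$ for all $s\in(0,T)$, i.e.\ $T_{\tilde R^-}\ge T$. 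Then Lemma~\ref{lem:RR+} applied to the barrier $\tilde R$ yields $\tilde T=T_{\tilde R}=T_{\tilde R^-}\ge T$, and the proof is complete. No separate treatment of $\{\tilde T=0\}$ or any fine-topological input beyond what is already encoded in Lemma~\ref{lem:RR+} is required.
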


\begin{proof}
  We first show that for all $t \in \Q_+:=\Q \cap (0,+\infty)$, one has $T \leq t + T_{\tilde{R}_t} \circ \theta_t$.
  For this we first prove
\begin{equation} \label{eq:shifted}
\mu P_{T} = \mu P_{t \wedge T} P_{T^{t}}
\end{equation}
where for fixed $t \in \Q_+$ the stopping time $T^{t}:=\inf \{ s >0: X_s \in {R}_{t+s}\}$ is the hitting time of $R$ shifted in time by $t$. This holds since for all Borel-measurable functions $f$ it holds
\begin{align*}
\mu P_{t \wedge T} P_{T^{t}} f &= \E^{\mu} \left[ f(X_{t+T^{t}\circ \theta_t}) \ind_{\{t < T\}} +  P_{T^{t}} f(X_T) \ind_{\{t \geq T\}} \right] .
\end{align*}
Since $T =t+T^{t}\circ \theta_t$ on $\{ t < T\}$, it holds that $\E^{\mu} [ f(X_{T^{t}\circ \theta_t}) \ind_{\{t < T\}}] = \E^{\mu} [ f(X_{T}) \ind_{\{t < T\}}]$. Furthermore, we know that by definition of $T^t$ we have $P_{T^{t}} f = f$ on ${R}_t^r$, since $t\mapsto R_t$ is non-decreasing. As $\P^{\mu}(X_T \in {R}_t \setminus {R}_t^r) = \nu({R}_t \setminus {R}_t^r) =0$ since $\nu$ does not charge semipolar sets, it holds that $\E^ x[P_{T^{t}} f(X_T) \ind_{\{t \geq T\}}] = \E^x[f(X_T) \ind_{\{t \geq T\}}]$. Together this implies $\mu P_{t \wedge T} P_{T^{t}} f   = \mu P_Tf$.

Secondly, note that $\mu P_{t \wedge T} \ll \xi + \nu$ does not charge semipolar sets. Since $\mu P_T \h{U} = \mu P_{t \wedge T} \h{U}$ on $\tilde{R}_t$, we can choose $\eta=\mu P_{t\wedge T}$, $\tau= T^{t}$ and $A = \tilde{R}_t$ in Lemma \ref{lem:TTR2} to obtain that $T^t \leq T_{\tilde{R}_t}$, $\P^{ \mu P_{t \wedge T}}$-a.s. We write
\[ \P^\mu( T > t + T_{\tilde{R}_t} \circ \theta_t) = \E^{\mu}\left[ \ind_{\{t<T\}} \P^{X_t} \left(T^t > T_{\tilde{R}_t}\right)\right] = 0. \]
and this implies
$$\P^\mu\big( \exists t \in \Q_+:~ X_s \in \tilde{R}_t \text{ for some } s\in[t, T)\big) = 0.$$
Since 
$$\tilde{R}_s^- \subset \bigcup_{t \leq s, t \in \Q_+} \tilde{R}_t$$ 
this implies that $T_{\tilde{R}^-} \geq T$ $\P^\mu$-almost surely, which concludes the proof by Lemma \ref{lem:RR+}.
\end{proof}

\section{Examples}\label{sec:examples}
In this section we apply Theorem~\ref{thm:main} to concrete Markov processes.
The examples are
\begin{description}
\item[Continuous-time Markov chains.]
  This is a toy example but we find it instructive since many abstract quantities from potential theory become very concrete and simple; e.g.~the obstacle PDE reduces to a system of ordinary differential equations.
 \item[Hypo-elliptic diffusions.]
   This is a large and important class of processes. 
   In the one-dimensional case we recover the setting of \cite{cox2013root,gassiat2015root} but for the multi-dimensional case the results are new to our knowledge.
   As concrete example we give a Skorokhod embedding for two-dimensional Brownian motion and Brownian motion in a Lie group. 
 \item[$\alpha$-stable L\'evy processes.] There is very little literature on the Skorokhod embedding problem for L\'evy processes, see \cite{doering2017skorokhod} for references.
   We apply our results to $\alpha$-stable L\'evy processes which are of growing interest in financial modelling, see e.g. \cite{tankov2003financial}, as they are characterised uniquely as the class of L\'evy processes possessing the self-similarity property.  
  Due to the infinite jump-activity such processes are hard to analyse but potential theoretic tools are classic in this context and much is known about their potentials, see \cite{bertoin1998levy, blumenthal1960some,bogdan2009potential,kyprianou2014potentials}. 
\end{description}
Two remarks are in order: firstly, the question to characterise or even construct measures $\mu,\nu$ that are in balayage order $\mu \prec \nu$ for a given Markov process seems to be a difficult topic.
In the case of one-dimensional Brownian motion this reduces to the convex order which is usually easy to verify but already for multi-dimensional Brownian motion it can be (numerically) difficult to check if two given measures are in balayage order. 
Secondly, we reiterate the discussion after Corollary~\ref{cor:OST} that the PDE formulation usually requires stronger assumptions whereas the discrete dynamic programming algorithm, Corollary~\ref{cor:OST}, applies to Theorem~\ref{thm:main} in full generality.
All our examples were computed using the dynamic programming equation stated as item~\eqref{cor:itm:DP} in Corollary~\ref{cor:OST}.
\subsection{Continuous-time Markov chains}
Let $Y=(Y_n)_{n\in \N}$ be a discrete-time Markov chain on a discrete state space 
 $E\subset\Z$ and transition matrix $\Pi$ such that $\Pi(x,y) = q(y-x)$ for all $x,y\in E$ and a probability measure $q$. Imposing $\exp(\lambda)$-distributed waiting times at each state, we arrive at the continuous-time Markov chain $X=(X_t)_{t\geq 0}$ with transition function
\begin{equation}
p_t(x,y) = \e^{-\lambda t}\sum_{k=0}^ \infty \frac{(\lambda t)^k}{k!}\Pi^k(x,y).
\end{equation}
The process $X$ is dual to the continuous-time Markov chain $\h X$ with transition matrix $\h \Pi = \Pi^T$ and the same transition rate $\lambda$ at each state with respect to the counting measure. The potentials are given with respect to the function
\begin{equation}
u(x,y) = \sum_{k=0}^ \infty \Pi^k(x,y)
\end{equation}
and the potential function of a measure $\mu$ is given by
\begin{equation}
\mu \h U (y) = \sum_{k=0}^ \infty \sum_{x\in E} \Pi^k(x,y)\mu ( x).
\end{equation}

\begin{example}[Asymmetric random walk on $\Z$]
Let $Y$ be the asymmetric random walk on $\Z$, that is $\Pi(x, x+1)= p\in (\frac{1}{2}, 1]$ and $\Pi(x, x-1)= 1-p=:q$.
  By translation invariance and a standard result (see e.g. \cite{revesz2005random}) it then holds for the potential kernel of $X$
\begin{equation}
u(x,y) = u(0, y-x) = \begin{cases}
\frac{1}{p-q}, & y\geq x,\\
\frac{1}{p-q}\cdot \left(\frac{p}{q}\right)^{y-x}, & y\leq x.
\end{cases}
\end{equation}
Now let $\mu = \delta_0$ and $\nu=\sum_{l=1}^N a_l\delta_{x_l}$ for some $N\in \N$, $a_l>0$, $\sum_{l=1}^N a_l=1$ and $0<x_1<\dots <x_N$. Then 
\begin{align}
\nu \h U(y) = \sum_{l=1}^N a_lu(x_l, y) = \begin{cases}
\frac{1}{p-q}\cdot \sum_{l=1}^N a_l\cdot  \left(\frac{p}{q}\right)^{y-x_l}, & y\leq x_1\\
\frac{1}{p-q}\cdot \left[ \sum_{l=1}^Ka_l  +\sum_{l=K+1}^Na_l\cdot \left(\frac{p}{q}\right)^{y-x_l}\right], & x_K <y\leq x_{K+1},~ 1\leq K\leq N-1,\\
\frac{1}{p-q}, & y\geq x_N.
\end{cases}
\end{align}
Since $p>q$, we have $\nu \h U\leq \mu \h U$ for all such $\nu$. The generator of $\h X$ is given by
\begin{equation}
\h \cL f(y) = \lambda \cdot[pf(y-1) + qf(y+1) -f(y)]
\end{equation}
and the obstacle problem \eqref{eq:obsPDE} reduces to the following set of ODEs:
\begin{align*}
u(0, x) &= \mu \h U(x),\\
\partial _t u(t, x) &= \begin{cases} \lambda \cdot[pu(t,x-1) + qu(t,x+1) -u(t,x)]  & \mbox{ if } u(t,x) > \nu \h U(x),  \\ 0 & \mbox{ if } u(t,x) = \nu \h U(x).  \end{cases}
\end{align*}
Then either classical methods for solving this set of coupled ODEs can be applied or we can directly apply the dynamical programming approach as in Corollary \ref{cor:OST} as follows: 
For $\epsilon>0$ small enough, we choose $x_0$ such that $\mu \h U(x_0)-\nu \h U(x_0)< \epsilon$. We approximate the function $f^{\mu, \nu}$ on the set $\{x_0, x_0+1, \dots , x_N\}$ at discrete time points $t_k=\frac{k}{2^n}$ for fixed $n$:
\begin{align*}
f_n^{\mu,\nu}(0,y) &= \mu \h{U}(y),\\
f_n^{\mu,\nu}(t_k,y) &= \mu \h{U}(y) \qquad\text {for } y<x_0 \text{ or } y\geq x_N,\\
f_n^{\mu,\nu}(t_{k+1}, y) &= \max\big\{ (1- \sfrac{\lambda}{2^{n}} )f_n^{\mu,\nu}(t_k, y) +\sfrac{\lambda}{2^{n}} \big (pf_n^{\mu,\nu}(t_k, y-1)+qf_n^{\mu,\nu}(t_k, y+1)\big), \nu \h U(y)\big\}.
\end{align*}
For example, we take $\lambda=1$, $p=\frac{2}{3}$ and $\nu = \frac{1}{4}\delta_2 + \frac{3}{4}\delta_4$.
Figure \ref{fig:contMC} show the potentials $\mu \h U$, $\nu \h U$ and the resulting Root barrier.
\end{example}
\begin{figure}[h!]
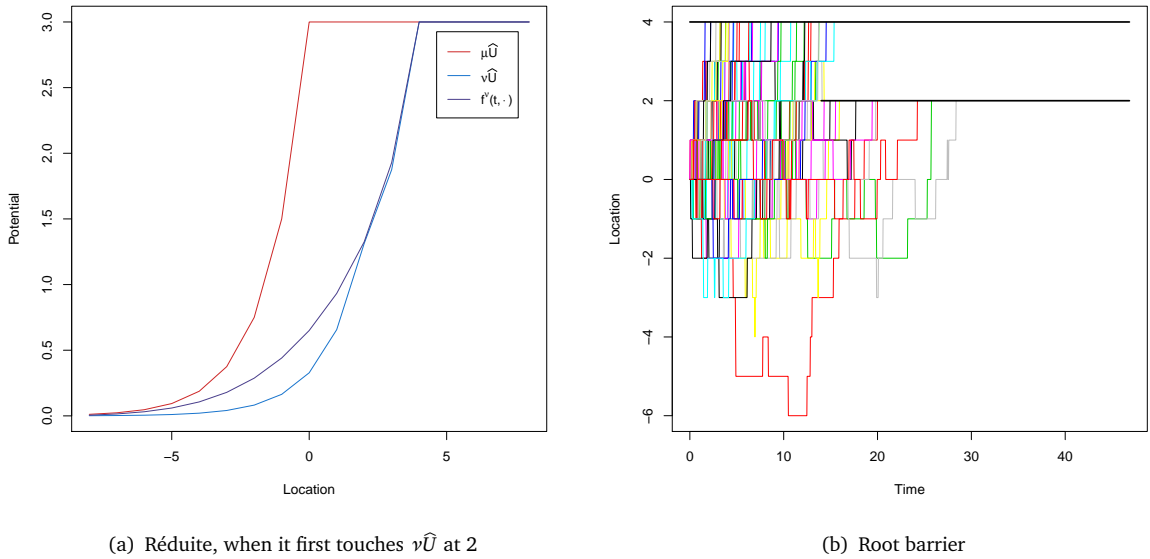

\centering
\subfigure[R\'eduite, when it first touches $\nu\h U$ at 2]{
\includegraphics[width= 0.48\textwidth]{Reduite_CMC}
}
\subfigure[Root barrier]{
\includegraphics[width= 0.48\textwidth]{Embedding_CMC}
}
\caption{Root embedding for the continuous-time asymmetric random walk on $\Z$ with $\lambda=1$, $p=\frac{2}{3}$, $\mu = \delta_0$ and $\nu = \frac{1}{4}\delta_2 + \frac{3}{4}\delta_4$.}\label{fig:contMC}
\end{figure}

\subsection{Hypo-elliptic diffusions}

Let $X$ be the diffusion in $\R^d$ obtained by solving an SDE formulated in the Stratonovich sense
\begin{equation} \label{eq:sdeX}
\dd X_t = \sum_{i=1}^N V_i(X_t) \circ \dd B_t + V_0(X_t) \dd t
\end{equation}
where the $V_i$, $i=1,\ldots, N$, are vector fields on $\R^d$ which we assume to be smooth with all derivatives bounded, and $B$ is a standard Brownian motion in $\R^N$.
We further assume that $X$ is  killed at rate $c(X) \dd t$, where $c\geq 0$ is a nonnegative smooth function. $X$ is then a standard Markov process on $\R^d$, with generator $\mathcal{L}$ which acts on smooth functions via
$$\mathcal{L} f = -c f + \left(V_0 + \sum_i V_i^2 \right) f = -c f + \sum_{i} b_i \partial_{i}f + \sum_{ij} \partial_i \left( a_{ij}  \partial_{j} f \right),$$
where the $b_i$ and $a_{ij}$'s are smooth functions which can be written explicitely in terms of the $V_i$. The formal adjoint of $\mathcal{L}$ with respect to Lebesgue measure is then given by
$$\h{\mathcal{L}} f = (-{\rm{div}} (b) - c) f - \sum_{i} b_i \partial_{i}f + \sum_{ij} \partial_i \left( a_{ij}  \partial_{j} f \right)$$
and we can choose smooth vector fields $\h{V}_i$'s such that $\h{\mathcal{L}} = (-\mathrm{div} (b) - c) f + \left( \h{V}_0 + \sum_i \h{V}_i^2\right)$. Assuming that
\begin{equation} \label{eq:div}
{\rm{div}} (b) + c \geq 0 \mbox{ on } \R^d,
\end{equation}
 we can then identify $\h{\mathcal{L}}$ with the generator of the Markov process consisting of the Stratonovich SDE
 \begin{equation} \label{eq:sdeXtilde}
 \dd \h{X}_t = \sum_{i} \h{V}_i(X) \circ \dd B_t + \h{V}_0(X) \dd t
 \end{equation}
killed at rate $(\mathrm{div} (b) + c)(\h{X}_t) \dd t$.

 In addition, assume that the vector fields satisfy the weak H\"ormander conditions
\begin{equation} \label{eq:hor1}
 \forall x \in \R^d,~~ \mathrm{Lie}\Big[ V_i, \;  \left[V_0, V_i\right], \; i \geq 1 \Big](x) = \R^d,
 \end{equation}
 \begin{equation} \label{eq:hor2}
 \forall x \in \R^d,~~ \mathrm{Lie}\Big[\h{V}_i, \;  \left[\h{V}_0, \h{V}_i\right], \; i \geq 1\Big](x) = \R^d,
\end{equation}
then the classical H\"ormander result \cite{hormander1967hypoelliptic} yields that the semigroups $P_t$, $\tilde{P}_t$ associated to $X$, $\tilde{X}$ admit (smooth) densities with respect to Lebesgue measure.
Therefore, $(P_t)$ and $(\h{P}_t)$ are in duality with respect to Lebesgue measure, as seen by 
\begin{align*}
 \frac{\dd}{\dd s} \left\langle P_{t-s} f, \h{P}_s g \right\rangle = \left\langle - \mathcal{L} P_{t-s} f, \h{P}_s g \right\rangle + \left\langle P_{t-s} f, \h{\mathcal{L}} \h{P}_s g \right\rangle = 0, 
\end{align*}
which yields that $\left\langle P_{t} f, g \right\rangle = \left\langle  f, \h{P}_t g \right\rangle$, first for $f,g$ smooth with compact support and then for all $f,g \geq 0$ Borel measurable by an approximation argument. In conclusion, we have obtained the following.
 
 \begin{proposition}
 Assume that \eqref{eq:div} and \eqref{eq:hor1}-\eqref{eq:hor2} hold. Then the process $X$ given by solving the SDE \eqref{eq:sdeX} satisfies Assumption \ref{asn:dual}, with $\h{X}$ given by the solution to \eqref{eq:sdeXtilde} and $\xi$ given by the Lebesgue measure on $\R^d$.

 \end{proposition}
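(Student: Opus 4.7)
The proposition packages together ingredients essentially assembled in the preceding discussion, so the plan is to verify the three requirements of Assumption~\ref{asn:dual} — both processes being standard, absolute continuity of the semigroups, and the weak duality identity \eqref{eq:dual} — in turn.

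For the first point, under the smoothness and bounded-derivative assumptions on the $V_i$ and on the $\hat V_i$ (the latter being explicit combinations of the former), the Stratonovich SDEs \eqref{eq:sdeX} and \eqref{eq:sdeXtilde} admit pathwise-unique strong solutions with continuous sample paths. The killing rates $c$ and $\mathrm{div}(b)+c$ are non-negative smooth functions under \eqref{eq:div}, so the killed processes are sub-Markovian with càdlàg trajectories and enjoy the strong Markov property; this is enough to qualify them as standard processes in the sense of Definition~\ref{def:stdproc}. For the second point, the weak Hörmander conditions \eqref{eq:hor1}--\eqref{eq:hor2} make both $\partial_t - \mathcal{L}$ and $\partial_t - \hat{\mathcal{L}}$ hypoelliptic on $\R_+ \times \R^d$; Hörmander's theorem \cite{hormander1967hypoelliptic} then provides smooth transition densities for both $P_t$ and $\hat P_t$ with respect to Lebesgue measure for every $t>0$, giving \eqref{eq:asnAC} directly.

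The content of the proof lies in the third point, for which I would follow the derivative-of-inner-product computation indicated in the text. By the explicit construction of $\hat{\mathcal{L}}$, integration by parts on $\R^d$ yields $\int (\mathcal{L}\phi)\psi\,\dd x = \int \phi(\hat{\mathcal{L}}\psi)\,\dd x$ for all $\phi,\psi \in C_c^\infty(\R^d)$, with no boundary contributions thanks to the compact support. Fixing such $f,g$ and $t>0$, the map $s \mapsto \langle P_{t-s}f, \hat P_s g\rangle_{L^2(\R^d)}$ is differentiable on $(0,t)$ with
\[\frac{\dd}{\dd s}\langle P_{t-s}f, \hat P_s g\rangle = -\langle \mathcal{L} P_{t-s}f, \hat P_s g\rangle + \langle P_{t-s}f, \hat{\mathcal{L}}\hat P_s g\rangle = 0,\]
so $\langle P_t f, g\rangle = \langle f, \hat P_t g\rangle$ on the dense class $C_c^\infty \times C_c^\infty$. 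A monotone class argument then extends the identity to all $\mathcal{E}^\ast$-measurable $f,g \geq 0$, establishing \eqref{eq:dual}.

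The main technical hurdle is justifying the derivative and the integration by parts rigorously: one needs $P_{t-s}f$ to be a classical $C^{1,2}$ solution of the associated Kolmogorov equation, so that $\partial_s P_{t-s}f = -\mathcal{L}P_{t-s}f$ holds pointwise, and one needs the integrands together with their spatial derivatives to decay fast enough at infinity for differentiation under the integral sign to be legitimate and for boundary terms to vanish. The smoothness is delivered by the hypoellipticity established in the second step, while Gaussian-type upper bounds on the hypoelliptic heat kernel (available under the bounded-derivative hypothesis on the vector fields) supply the decay. Once the duality identity is secured on $C_c^\infty \times C_c^\infty$, extending it to arbitrary non-negative Borel $f,g$ via monotone convergence is routine, and this completes the verification of Assumption~\ref{asn:dual}.
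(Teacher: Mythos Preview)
Your proposal is correct and follows essentially the same route as the paper: Hörmander's theorem from \eqref{eq:hor1}--\eqref{eq:hor2} gives the transition densities, and the weak duality \eqref{eq:dual} is obtained by differentiating $s\mapsto\langle P_{t-s}f,\hat P_s g\rangle$, observing the derivative vanishes by the adjoint relation, and then extending from $C_c^\infty$ test functions by approximation. You have simply spelled out more of the technical justification (standardness of the killed processes, hypoelliptic heat-kernel bounds for the integration by parts) than the paper does.
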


\begin{example}[Brownian motion in $\R^d$]For $d\leq 2$, as Brownian motion is recurrent, for any positive Borel function $f$ we have either $Uf \equiv \infty$ or $Uf\equiv 0$. Therefore we consider the Brownian motion killed when exiting the unit ball $B_1(0)$, i.e. $\zeta = \inf\{t>0:~||X_t|| >1\}$. For any probability measure $\mu$ with density $f$ supported on $B_1(0)$, the potential $\mu \h U = f\h U$ is the unique continuous solution of $\frac{1}{2}\Delta v = f$ on $B_1(0)$ vanishing on $\partial B_1(0)$, and is given explicitely as $ f\h U(x) = \E^x \left[\int_0^\zeta f(\h X_t) \dd t\right]= \int u(x, y) f( y)\dd y $, where\begin{equation}
u(x,y) = \begin{cases}
- |x-y|, & d=1,\\
\frac{1}{\pi}\log\frac{1}{||x-y||}, &d=2.\\
\end{cases}
\end{equation}
In dimensions $d\geq 3$, Brownian motion is transient, and the potential is the Newtonian potential on $\R^d$: 
\begin{equation}
u(x,y) =c_d\cdot \frac{1}{||x-y||^{d-2}},
\end{equation}
where $c_d = \sfrac{1}{2}\pi^{-\nicefrac{d}{2}}\Gamma \big(\sfrac{1}{2}(d-2)\big)$.
For $d=1$ the balayage order reduces to the convex order which is easy to verify. In higher dimensions it is in general non-trivial to find measures in balayage order. 

Now we consider the two-dimensional Brownian motion starting in 0.
As an example for a measure $\nu$ which is not rotational symmetric and can be embedded in the two-dimensional Brownian motion, we take $\nu$ as the measure with the following density 
as an approximation of the marginal of the diffusion $Y$ which is generated by the operator $\e^{-x_1-x_2}\L$, here $\nu(A)=\P^0(Y_{0.1}\in A)$,
$$ \frac{\nu(\ddi x) }{\dd x} = C\exp\left[-2.5\cdot \big(a (x_1-\tilde x) -b (x_2-\tilde x)-\tilde x)\big)^2    -6\cdot 
\big(b(x_1-\tilde x) + a(x_2-\tilde x) -\tilde x \big)^2     \right],$$
where $C$ denotes a normalising constant, $a=\cos(\sfrac{\pi}{4})$, $b= \sin(\sfrac{\pi}{4})$ and $\tilde x = 0.15$.
The (empirical) density is respresented in Figure \ref{fig:2DBM}(a) on page \pageref{fig:2DBM}. We take $\nu$ of this form since $Y$ can be obtained as a time change via an additive functional of $X$ which implies $\delta_0 \prec \nu$ (we will show this explicitly in Section \ref{sec:generalRoot}) and we see $\nu \h U$ in \ref{fig:2DBM}(b). 
\begin{figure}[p]
\centering
\subfigure[Empirical density of $\nu$]{
\includegraphics[width=0.45\textwidth]{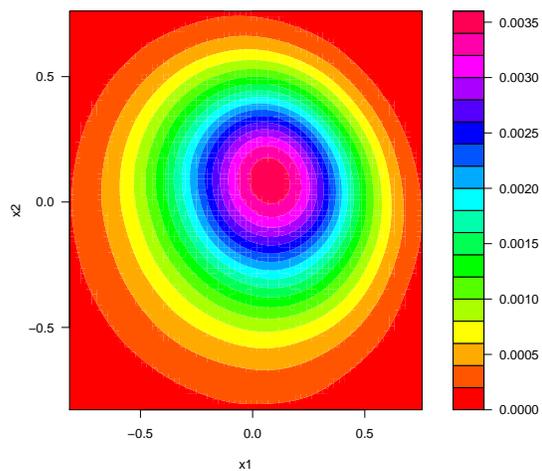} 
}
\subfigure[Potential difference $\delta_0\h U - \nu \h U$]{
\includegraphics[width=0.45\textwidth]{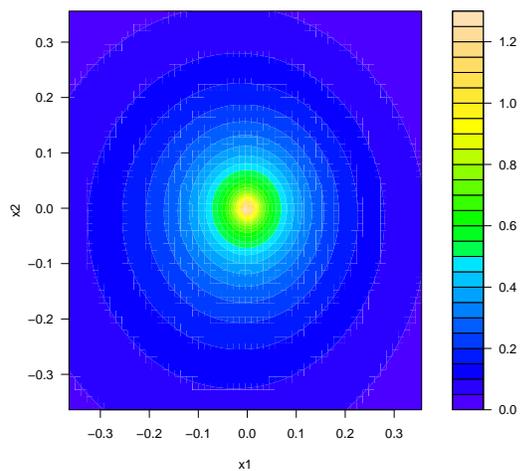} 
}
\subfigure[Brownian trajectories hitting the barrier]{
\includegraphics[width=0.45\textwidth]{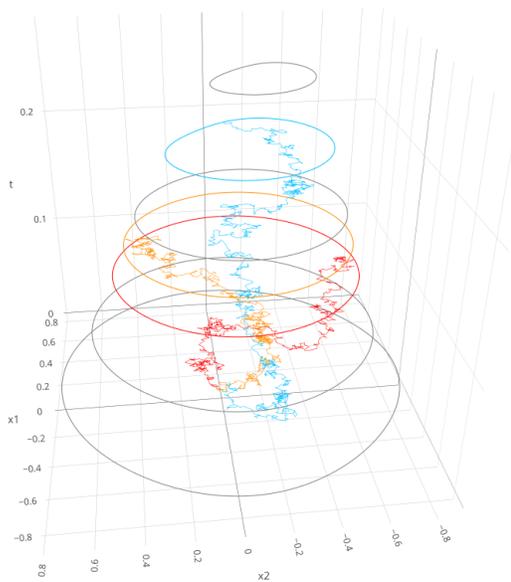} 
}
\subfigure[Root barrier]{
\includegraphics[width=0.45\textwidth]{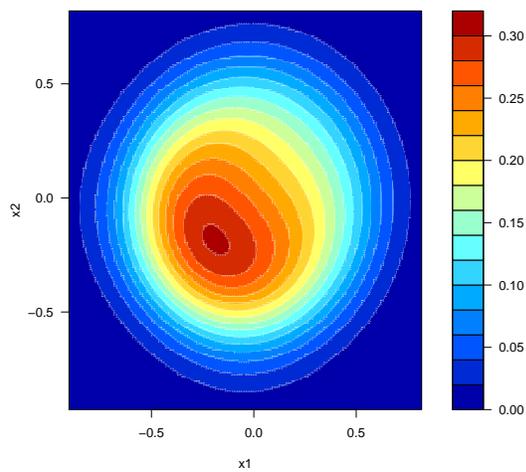} 
}
\caption{Root embedding for the 2d Brownian motion}\label{fig:2DBM}
\end{figure}
\end{example}

\begin{example}[Lie-group valued Brownian motion]
  Let $(B^1,B^2)$ be a two-dimensional Brownian motion.
  Then $(B^1,B^2,\int B^1 \dd B^2 - \int B^2 \dd B^1)$ can be identified (after taking the Lie algebra exponential), as a Brownian motion in the free nilpotent Lie group of order 2; see Appendix \ref{app:hypo-elliptic} for details and extension to general free nilpotent groups. 
The generator of the process is the sub-Laplacian $\Delta_G = \frac{1}{2} \left(X^2 + Y^2\right)$ on the Heisenberg group $G$; where in coordinates 
$$X= \partial_{x} + \frac{1}{2} y \partial_a, \;\;\; Y = \partial_{y} - \frac{1}{2} x \partial_a.$$
  As shown by \cite{gaveau1977, levy1951}, the transition density equals 
  \begin{align}\label{eq:densityGroupBM}
p_1(b^1,b^2,a) = \frac{1}{2\pi^2} \int_0^\infty \frac{x}{\sinh(x)}\exp\left(-\frac{(b^1+b^2)^2}{2 \tanh(x)}\right) \cos(ax)\dd x 
  \end{align}
and by Brownian scaling $p_t(b^1,b^2,a):=  p_1(\frac{b^1}{\sqrt t},\frac{b^2}{\sqrt t},\frac{a}{t})$.
In this case, it is already non-trivial to find measures $\mu,\nu$ in balayage order, $\mu \prec \nu$, even if $\mu$ is a Dirac at the origin.
However, Proposition \ref{Prop:Balayage Heisenberg} in the appendix shows that any measure $\tilde \nu$ on $(0,\infty)$ can be lifted to a measure $\nu$ on $G$ such that $ \delta_0  \prec \nu $. 
This provides a rich class of probability measures in balayage order, and Theorem~\ref{thm:main} allows to apply dynamic programming to compute the Root barrier solving SEP$(X,\delta_0,\nu)$.
However, this is computationally expensive since \eqref{eq:densityGroupBM} is not available in closed form. 
In this case, the well-posedness of the obstacle PDE 
\begin{align*}
\min \left[ (\partial_t -\Delta_G) u,~ u- \nu\h U\right] &=0,\\
u(0,\cdot) &= \delta_0\h U
\end{align*}
can be shown by standard methods (such as viscosity solutions). 
Again this leads to non-trivial numerics\footnote{We would like to thank Oleg Reichmann and Christian Bayer on helpful conversations and numerical experiments.}, even after using the radial symmetry of~\eqref{eq:densityGroupBM} to reduce the space dimension to 2, namely radius and area.  
Nevertheless, both approaches (dynamic programming and PDE) are applicable to compute barriers for group-valued Brownian motion, although much work remains to be done to turn this into a stable numerical tool and we leave this for future research. 
\end{example}

\subsection{Symmetric stable L\'evy processes}
A right-continuous stochastic process $(X_t)_{t\geq 0}$ is called an $\alpha$-stable L\'{e}vy process, if it has independent, stationary increments which are distributed according to an $\alpha$-stable distribution. We consider the symmetric case without drift.
In this case, the characteristic component is given by $\psi(\theta) = |\theta|^\alpha$, i.e. $\E[\e^{i\theta X_t}]= \e^{-t|\theta|^\alpha} =:g_t(\theta)$ and hence $X_t$ satisfies the scaling property $X_t \overset{d}{=} t^{\nicefrac{1}{\alpha}} X_1$. 
Classic results, e.g.~\cite{hartman1942infinitesimal}, show that $X$ has a transition density 
\begin{align*}
  p_t(x,y) = p_t(y-x) = (\F\ie  g_t)(y-x),
\end{align*}
which is absolutely continuous with respect to the Lebesgues measure. 
For further properties of symmetric stable processes, we refer to \cite{blumenthal1960some}.
We are going to take $\alpha\in (0,1)$, as in this case $X$ is transient, as shown in \cite{bogdan2009potential}. 
Furthermore, one-dimensional L\'evy processes $X$ are dual to $\h X:= -X$ with respect to the Lebesgue measure (see \cite{bertoin1998levy}). Since the jumps are distributed according to the symmetric stable distribution, the symmetric stable process $X$ is self-dual. By \cite{blumenthal2007markov} the potential $Uf(x) = \int u(x,y)f(y)\dd y$ equals
\begin{equation}
u(x,y) = C_{1,\alpha}\cdot |x-y|^{\alpha-1},
\end{equation}
where in the one-dimensional case $C_{1,\alpha}= \Gamma(\frac{1-\alpha}{2})\cdot\big[ 2^{\alpha}\sqrt{\pi}\Gamma(\frac{\alpha}{2})^{2}\big]\ie$. 
In order to construct the Root stopping time, we construct the function $f^{\mu,\nu}$ as described in Theorem \ref{thm:main} as solution to the obstacle problem
\begin{align*}
  v(0,\cdot) = \mu\h U, \;\;\; \min\left[(\partial_t + (-\Delta)^{\nicefrac{\alpha}{2}})v, v - \nu \h U \right] = 0,
\end{align*}
where the generator of the process $-(-\Delta)^{\nicefrac{\alpha}{2}}$ is given by the fractional Laplacian
\begin{equation}\label{eq:fracLap}
(-\Delta)^{\nicefrac{\alpha}{2}} f(y) = C_{2,\alpha} \cdot \operatorname{P.V.}\int_{-\infty}^\infty \frac{f(y)-f(z+y)}{|z|^{1+\alpha}}\dd z,
\end{equation}
with $\operatorname{P.V.}$~denoting a principal value integral. 
\begin{example}[{Embedding for $\alpha=0.5$, $\mu\sim \mathrm{Uniform}([-1,1])$ and $\nu\sim 0.75\cdot\mathrm{Beta}(2,2)$}]
Let $\mu$ be the Uniform distribution on $[-1,1]$, then 
\begin{equation}
\mu \h U (y) = \frac{C_{1,\alpha}}{2\alpha}\cdot \begin{cases}
(1-y)^\alpha + (1+y)^\alpha, & \text{ for } |y|<1,\\
(|y|+1)^\alpha - (|y|-1)^\alpha, & \text{ for } |y|\geq 1.
\end{cases}
\end{equation}
We want to construct a solution $T$ for \sep\, where the density of $\nu$ is given by $\frac{\nu(\ddi x)}{\dd x} = 0.75\cdot g_{a,b}$, where 
\begin{equation}
g_{a,b}(x) = \begin{cases}
\frac{\Gamma(a+b)}{\Gamma(a)\cdot \Gamma(b)}\cdot 2^{-a-b+1}\cdot (x+1)^{a-1}\cdot (1-x)^{b-1}, &|x|\leq 1,\\
0, &|x|>1.
\end{cases}
\end{equation}
is the density of a Beta$(a,b)$ distribution on the interval $[-1,1]$. Realisations of the resulting embedding will then give us that on the event $\{T<\infty \}$ where $\P(T<\infty) =0.75$, the stopped values $X_T$ are distributed according to the Beta$(a,b)$ distribution.
Studying general numerical methods for the fractional Laplacian is beyond the scope of this article, so we just discuss a quick method which is adapted to our case. 
We can rewrite \eqref{eq:fracLap} as
\begin{equation}
(-\Delta)^{\nicefrac{\alpha}{2}} f(y) = C_{2,\alpha} \cdot\lim_{h\rightarrow\infty}\int_0^h\frac{2f(y)-f(z+y)-f(y-z)}{|z|^{1+\alpha}}\dd z.
\end{equation}
Define the set $\mathcal{O}_{\overline T}:= [0, \overline T]\times [-K, K]$ for large $\overline T, K\in \R$ and $h:= (\Delta t, \Delta x) = \big(\frac{\overline T}{N_{\overline T}}, \frac{2K}{N_x}\big)$, where $N_x$, $N_{\overline T}\in \N$ are chosen large enough. The space-time mesh grid is defined as
\begin{align*}
  \mathcal{G}_h := \{t_n: t_n= n\cdot \Delta t,~ n = 0, 1, \dots, N_{\overline T}\}\times \{x_j: x_j = -K+j\cdot\Delta x,~ j = 1, \dots, N_x\}.
\end{align*}
For the resulting minimal excessive majorant of $\mu \h U \ind_{\{t\leq 0\}}+\nu \h U \ind_{\{t> 0\}}$ we expect that $f^{\mu,\nu}$ never touches $\nu\h U$ outside $[-1,1]$ as this is the support of $\nu$. Indeed, a straightforward calculation shows that starting in $\mu \h U$, for $|y|\gg 1$ we have $(-\Delta)^{\nicefrac{\alpha}{2}}\mu \h U(y) = o(|y-1|)$, i.e.~the repeated action of the fractional Laplacian on $\mu \h U$ outside an interval $[-K, K]$ with large enough $K\gg 1$ is negligible. For any $(t,x)\in \mathcal{G}_h$ we define the operator
\begin{equation*}
S^h[v^h](t,x) = \begin{cases}
v^h(t,x) +\Delta t\cdot \big((-\Delta)^{\nicefrac{\alpha}{2}}_h v^h(t, \cdot)\big)(x), & x\in[-K, K],\\
\mu \h U (x), & \text{else}.
\end{cases}
\end{equation*}
where $(-\Delta)^{\nicefrac{\alpha}{2}}_h$ is the evaluation of the fractional Laplacian using a Gau\ss-Kronrod quadrature as described in \cite{davis2007methods} on $\mathcal{G}_h$. Then the minimal excessive majorant $f^{\mu,\nu}$ for Theorem \ref{thm:main} can be computed on $\mathcal{G}_h$ as follows:
\begin{equation}
v^h(0, \cdot) = \mu \h U(\cdot), \qquad v^h((n+1)\Delta t, \cdot) = \max \big( \nu \h U(\cdot), ~S^h[v^h](n \Delta t, \cdot) \big).
\end{equation}
In Figure~\ref{fig:symstable} on page \pageref{fig:symstable}, we can see a realisation of the embedding for \sep\, with $\mu$ and $\nu$ as given above. 
As for small values of $\alpha$, the trajectories of $X$ may have large jumps, for the simulations we need to take into consideration that $X$ may jump back in the barrier although it already left the support of $\nu$. Following the results from \cite{blumenthal1961distribution, kyprianou2014hitting}, the probability of $X$ not returning to $(-1,1)$ after reaching level $x$ is
\begin{equation}
\P^x(X_t \not \in (-1,1) ~~\forall t>0)=\frac{\Gamma(1-\sfrac{\alpha}{2})}{\Gamma(\sfrac{\alpha}{2})\Gamma(1-\alpha)}\int_0^{\frac{x-1}{x+1}} u^{\nicefrac{\alpha}{2}-1}(1-u)^{-\alpha}\dd u.
\end{equation}
\end{example}
\begin{figure}[p]
\centering
\subfigure[Resulting barrier according Theorem \ref{thm:main} and the trajectories hitting the barrier among 100 samples]{
\includegraphics[width= 0.93\textwidth]{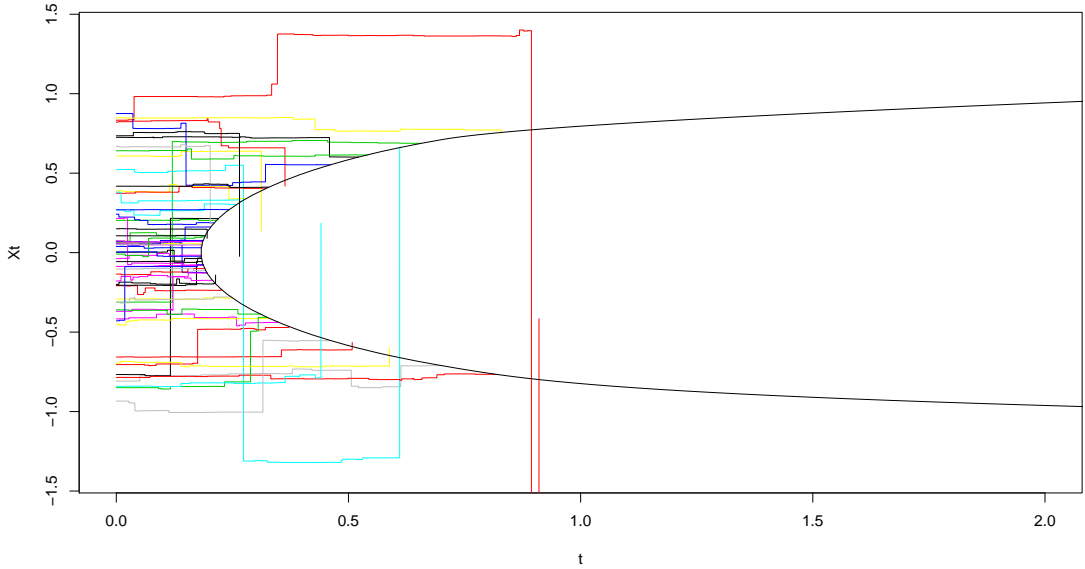}
}
\subfigure[Minimal excessive majorant, when it touches $\nu \h U$]{
\includegraphics[width= 0.45\textwidth]{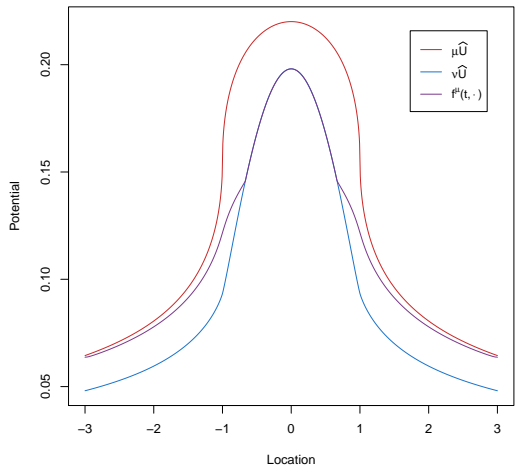}
}
\subfigure[Comparison of the quantiles of the Beta distribution and stopped values of 1000 trajectories]{
\includegraphics[width=0.45\textwidth]{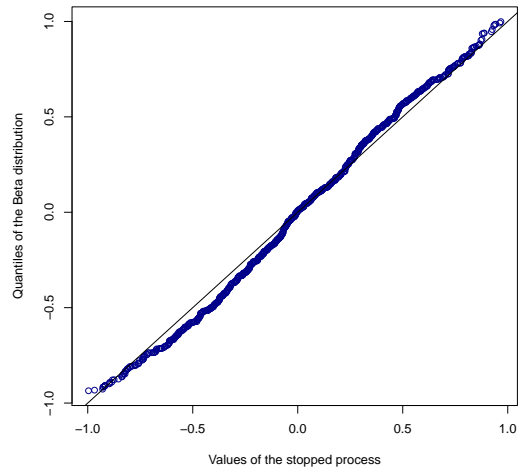}
}
\caption{Root embedding for the symmetric $\sfrac{1}{2}$-stable process, $\mu = \mathrm{Uniform}[-1,1]$, $\nu = 0.75\cdot \mathrm{Beta}(2,2)$.}\label{fig:symstable}
\end{figure}

\section{Towards generalised Root embeddings}
\label{sec:generalRoot}
The results of the previous sections, rely on Root's and Rost's approach to lift $X$ to a space-time process
\[
\overline X=(t,X_t)_{t \ge 0}
\]
and find a solutions of \sep\, that are given as a hitting time of $\overline X$. 
A natural generalisation is to replace the time-component by another real-valued, increasing process $A$ with $A_0=0$, such that $(A,X)$ is again Markov and carry out a similar construction.
That is, to construct a set such that its first hitting time by the lifted process
\[
(A_t,X_t) _{t \ge 0}
\]
solves \sep.
Again, one expects such a stopping time to be optimal in a minimal residual expectation sense, however, now formulated in terms of $A$.

Carrying out this program in full generality is beyond the scope of this article.
Instead, we focus on the case when $A$ is of the form $A_t=\int_0^t a(X_s)\dd s $ where $a$ is strictly positive.
Denote with $\tau_s:=\inf \{t>0: A_t = s\}$ the first hitting time of $s \ge 0$ by $A$ and with $Y_s:=X_{\tau_s}$ the time-changed process. 
Since for every (sufficiently nice) set $R \subset [0,\infty) \times E$  
\[
\inf \{ s > 0: (s,Y_s) \in R\} = \inf\{ s> 0: (A_{\tau_s},X_{\tau_s})  \in R\},
\]
this allows us to use the framework of the previous sections.
Concretely, one needs to verify that the assumptions of Theorem~\ref{thm:main} are met by $Y$.
This already provides a new class of solutions for \sep.
It can be seen as an interpolation between the Root embedding (when $a \equiv 1$) and the classical Vallois embedding \cite{vallois1992quelques}, since when applied to a Brownian motion, the classical Vallois embedding can be identified as the limiting case when $a$ approaches a Dirac at $0$.

\subsection{Generalised Root embeddings}
Below we restrict ourselves to additive functionals of the form 
\[\dd A_t = a(X_t) \dd t\]
with a Borel measurable $a$ which is locally bounded and locally bounded away from $0$, so that $t\mapsto A_t$ is one-to-one and the measure $m_A(\ddi x) = a(x) \xi(\ddi x)$ is \s-finite.
This implies that $A$ is an additive functional of $X$, i.e. $A$ satisfies
\begin{enumerate}
\item $A_0=0$, $t\mapsto A_t(\omega)$ is right continuous and non-decreasing, almost surely,
\item $A_t$ is $\F_t$-measurable,
\item $A_{t+s} = A_t+ A_s\circ \theta_t$ almost surely for each $t, s\geq 0$.
\end{enumerate}
\noindent We can then define the time-changed process $Y$ as follows
$$Y_t = X_{\tau_t}, \qquad \tau_t:= \inf\{ u > 0: \;\; A_u = t\}.$$
By \cite[Theorem 10.11]{dynkin1965markov}, $Y$ is a standard process. 
Its potential is given by 
\[U^Af(x) = \E^x\left[\int_0^ \infty f(X_t)\dd A_t\right] = \E^x\left[\int_0^ \infty f(X_t) a(X_t)\dd t\right] = \int u(x,y) f(y) a(y)\xi(\ddi y),\] 
and we can define the potential operator $\h U^Af(y) = \int f(x)u(x,y)  a(x)\xi(\ddi x)$ for any non-negative Borel-measurable function $f$ which corresponds to the time-changed process $\h Y_t = \h X_{\h \tau_t}$, where we analogously define $\h\tau_t := \inf\{ u > 0, \;\;\h A_u = t\}$ with $\h A_t = \int_0^t a(\h X_s) \dd s$.
In addition, strong duality holds,
\begin{theorem}[Revuz, Thm. V.5 and Thm. 2 in VII.3 in \cite{revuz1970mesures}]
The processes $Y$ and $\h Y$ are in strong duality with respect to the so-called Revuz measure $m_A(\ddi x) = a(x) \xi(\ddi x)$.
\end{theorem}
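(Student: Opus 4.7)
The statement asserts (i) weak duality of the time-changed semigroups $(P^Y_t)$ and $(\h P^Y_t)$ with respect to $m_A$ and (ii) absolute continuity of their resolvent kernels with respect to $m_A$. The plan is to work at the level of $\lambda$-resolvents and reduce everything to the strong duality of $X$ and $\h X$ already in hand, exploiting that the additive functional $A$ depends on the state only through the weight $a$ which is common to $X$ and $\h X$.

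The first step is to verify duality of the $0$-potentials directly from the formulas already derived in the excerpt: since
\begin{equation*}
\int g(x)\,(U^A f)(x)\,m_A(\ddi x) \;=\; \iint g(x)\,u(x,y)\,f(y)\,a(x)\,a(y)\,\xi(\ddi x)\,\xi(\ddi y) \;=\; \int f(y)\,(\h U^A g)(y)\,m_A(\ddi y),
\end{equation*}
the common density of $U^A$ and $\h U^A$ with respect to $m_A$ is simply $u(x,y)$. To extend this to the full semigroups, I would pass through the $\lambda$-resolvents. By the change of variables $t=A_s$ (valid because $a$ is strictly positive and locally bounded, so $A$ is a continuous bijection),
\begin{equation*}
U^{A,\lambda}f(x) \;=\; \E^x\!\left[\int_0^\infty e^{-\lambda A_s}f(X_s)\,a(X_s)\,\dd s\right],
\end{equation*}
which is, up to the weight $a$, the potential of the subprocess $X^{(\lambda a)}$ obtained by killing $X$ at the state-dependent rate $\lambda a$. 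Since the multiplicative functional $e^{-\lambda A_t}$ depends only on the state trajectory, classical subprocess theory (cf.~\cite{blumenthal2007markov}) gives that $X^{(\lambda a)}$ and $\h X^{(\lambda a)}$ remain in strong duality with respect to $\xi$, with a common potential density $u^{\lambda a}(x,y)$ solving the resolvent identity $u^{\lambda a}=u-\lambda\,u(a\,u^{\lambda a})$. Re-running the Fubini computation above with $u^{\lambda a}$ in place of $u$ identifies $u^{\lambda a}$ as a common density for $U^{A,\lambda}$ and $\h U^{A,\lambda}$ with respect to $m_A$, so the $\lambda$-resolvents are in duality on $L^2(m_A)$. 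Laplace inversion in $\lambda$ then transfers this to weak duality of $(P^Y_t)$ and $(\h P^Y_t)$.

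Absolute continuity of $U^{A,\lambda}(x,\cdot)$ with respect to $m_A$ is then immediate from the existence of the density $u^{\lambda a}(x,\cdot)$ combined with the local positivity of $a$, which makes $m_A$-null and $\xi$-null sets coincide locally. That $Y$ is a standard process is already given by the Dynkin time-change theorem quoted just above, so Assumption \ref{asn:dual} is verified for the pair $(Y,\h Y)$.

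The hard part is the identification of the $\lambda$-resolvent of $Y$ with the weighted potential of the killed subprocess $X^{(\lambda a)}$, together with the verification that state-dependent killing by a purely state-dependent multiplicative functional preserves strong duality; this is precisely what the Revuz correspondence invoked from \cite{revuz1970mesures} packages, associating to any positive additive functional of the form $\int_0^\cdot a(X_s)\dd s$ its Revuz measure $a\xi$ and guaranteeing compatibility with time reversal of the underlying dual pair. Once this identification is granted, the remainder of the argument is routine Fubini-and-Laplace bookkeeping.
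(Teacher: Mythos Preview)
The paper does not supply its own proof of this statement; it is quoted as an external result from Revuz \cite{revuz1970mesures}, so there is no argument in the paper against which to compare your sketch. Your outline is nonetheless a reasonable and essentially correct route to the conclusion: identify the $\lambda$-resolvent of $Y$ with the $0$-potential of the subprocess of $X$ killed at rate $\lambda a$, appeal to the fact that state-dependent killing by a multiplicative functional $\exp\big(-\int_0^t c(X_s)\dd s\big)$ preserves strong duality (this is classical, cf.~\cite[Ch.~VI]{blumenthal2007markov}), and then read off $u^{\lambda a}$ as a common density for $U^{A,\lambda}$ and $\h U^{A,\lambda}$ with respect to $m_A$. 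The Laplace-inversion step from resolvent duality to semigroup duality is standard. As you yourself note in your final paragraph, the substantive content you are invoking---that the Revuz correspondence is compatible with the given dual pair---is precisely what the cited theorems of Revuz package, so your sketch should be read as an unpacking of that citation rather than an independent proof.
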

\begin{remark}\label{rem:tcpot}
From the duality with respect to the Revuz measure $m_A$, it follows for any Borel measure $\mu$ and $y\in E$ that
\[\mu \h U^A (y) = \int u(x,y)\mu(\ddi x).\]
Hence, $\mu \h U^A = \mu \h U$, i.e.~the potentials of the measures of the original and the time-changed process are equal.
However, note that $\mu U^A \not = \mu  U$.
\end{remark}
To apply our main result to the time-changed process we make the following assumption, which we will discuss later in this section. 

\begin{assumption} \label{asn:acA}
For all $t>0$ and $x\in E$, the transition functions of $Y$ and $\h Y$ are absolutely continuous with respect to $m_A$, i.e. $P^A_t(x,\cdot) \ll m_A$ and $\h{P}^A_t(\cdot,y) \ll m_A$.
\end{assumption}

Combining the above duality results with our main Theorem~\ref{thm:main} then gives us the following new solution of \sep. 
\begin{theorem}\label{thm:tc-embed}
Let $X$ be a Markov process and $A$ an additive functional for which Assumption \ref{asn:dual} and Assumption \ref{asn:acA} hold.
  Let $\mu\prec\nu$ be two measures with $\sigma$-finite potentials in balayage order, i.e.~$\mu \h U\geq \nu \h U$, and such that $\nu$ charges no semipolar set.
  Then there exists a Root barrier $R^A$ which embeds $(A,X)$ such that its first hitting time $T^A:= \inf\{ t >0: \;\; (A_t, X_t) \in R^A \}$ embeds $\mu$ into $\nu$,
\[
\mu P_{T} = \nu. 
\]
Moreover, if we denote 
\begin{equation}\label{eq:tc-red}
f^{A,\mu,\nu} = \inf \big\{ g\mbox{ $\h Q^A$-excessive:} \;\;\; g \geq \mu\h U(x) \ind_{\{t \leq 0\}} + \nu \h U(x) \ind_{\{t >0\}} \big\},
\end{equation}
where $\h Q^A$ denotes the space-time semigroup associated with $\h Y$, then 
\begin{enumerate}
\item $f^{A,\mu,\nu}(t,x) = \mu P_{\tau_{t}\wedge A_{T^A}}\h U(y)$,
\item 
$T^A = \argmin\limits_{S:~ \mu P_S = \nu}~~ \mu P_{\tau_t\wedge A_S} U^A(B)$ for all Borel sets $B$ and $t\geq 0$,  \label{thm:itm:addoptimal}
 \item  We may take
$R^A = \left\{ (s,x)\in \R_+ \times E\;\; |\;\; f^{A,\mu,\nu}(s,x) = \nu \h U (x) \right\}$.
\end{enumerate}
\end{theorem}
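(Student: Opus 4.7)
The plan is to apply Theorem~\ref{thm:main} to the time-changed process $Y$ and then transport the resulting objects back to $X$ via the bijection $s \leftrightarrow \tau_s$ between the two time scales. First I check the hypotheses of Theorem~\ref{thm:main} for $(Y,\h Y)$ and the data $(\mu,\nu)$: strong duality of $Y$ and $\h Y$ with respect to $m_A$ is the Revuz result cited before the statement; absolute continuity of the semigroups is exactly Assumption~\ref{asn:acA}; and Remark~\ref{rem:tcpot} gives $\mu \h U^A = \mu \h U$ and $\nu \h U^A = \nu \h U$, so that $\mu \prec \nu$ in the balayage order of $X$ is equivalent to $\mu \prec \nu$ in the balayage order of $Y$. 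Since $a$ is strictly positive and locally bounded away from $0$, the map $t \mapsto A_t$ is continuous and strictly increasing, so $Y$- and $X$-hitting times of a given Borel set agree up to the time change, and the thin (hence semipolar) sets for $Y$ coincide with those for $X$. In particular $\nu$ charges no semipolar set of $Y$. The $\sigma$-finiteness of $\mu U^A$ and $\nu U^A$ follows from $\mu U^A(\ddi y) = a(y)\,\mu U(\ddi y)$ together with the local boundedness of $a$.

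With the hypotheses verified, Theorem~\ref{thm:main} produces a Root barrier $R^A \subset \R_+ \times E$ for $Y$ whose hitting time $T^Y_{R^A} := \inf\{s>0 : (s,Y_s) \in R^A\}$ satisfies $\mu P^Y_{T^Y_{R^A}} = \nu$, together with the free-boundary and optimality properties expressed in terms of $\h Q^A$. Using $Y_s = X_{\tau_s}$ and the continuity and strict monotonicity of $A$, one has the key identity $T^Y_{R^A} = A_{T^A}$ where $T^A := \inf\{t>0 : (A_t,X_t) \in R^A\}$, and hence $Y_{T^Y_{R^A}} = X_{T^A}$. This immediately yields $\mu P_{T^A} = \mu P^Y_{T^Y_{R^A}} = \nu$, so $T^A$ solves \sep.

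For the three numbered assertions, each follows from the corresponding item of Theorem~\ref{thm:main} applied to $Y$, using $\h U^A = \h U$ on the relevant measures together with the pushforward identity $\mu P^Y_{s} = \mu P_{\tau_s}$. Concretely, for item (1) I write $\mu P^Y_{t \wedge T^Y_{R^A}}\h U^A(x) = \mu P_{\tau_t \wedge T^A}\h U(x)$, using $\tau_{t \wedge A_{T^A}} = \tau_t \wedge T^A$. For item (2), every $Y$-stopping time $S^Y$ with $\mu P^Y_{S^Y} = \nu$ arises as $S^Y = A_S$ for some $X$-stopping time $S$ with $\mu P_S = \nu$, and the identity $\mu P^Y_{t \wedge S^Y} U^A = \mu P_{\tau_t \wedge S} U^A$ translates the minimality for $Y$ into the stated formula for $X$. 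Item (3) is immediate, since the definition \eqref{eq:tc-red} of $f^{A,\mu,\nu}$ is exactly \eqref{eq:reprred} applied to $Y$ (again using $\h U^A = \h U$ on $\mu$ and $\nu$). The main obstacle I anticipate is the careful bookkeeping of how semipolar sets, the absolute-continuity hypothesis, and the time change interact; once these standard but slightly technical facts about time changes by strictly positive additive functionals are in place, the remainder is a direct transcription of the arguments of Section~\ref{sec:proof} through the bijection $s \leftrightarrow \tau_s$.
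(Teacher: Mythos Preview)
Your proof is correct and follows essentially the same approach as the paper: verify that $Y$ and $\h Y$ satisfy the hypotheses of Theorem~\ref{thm:main} (duality via Revuz, absolute continuity via Assumption~\ref{asn:acA}, balayage order via Remark~\ref{rem:tcpot}, preservation of semipolar sets), apply it, and transport the conclusions back to $X$ through the bijective time change $s \leftrightarrow \tau_s$. The only minor differences are that you assert the semipolar sets for $Y$ and $X$ coincide whereas the paper argues only the needed direction ($X$-semipolar $\Rightarrow$ $Y$-semipolar, via the countable-visits characterisation), and that you make the $\sigma$-finiteness of $\mu U^A$ explicit via $\mu U^A(\ddi y)=a(y)\,\mu U(\ddi y)$, which the paper leaves implicit.
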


\begin{proof}
By Remark \ref{rem:tcpot},
$\mu \h U\geq \nu \h U$ implies $\mu \h U^A\geq \nu \h U^A$ for the time-changed process $Y$. 
We henceforth write $N_B(\omega) = \{t>0:~ X_t(\omega) \in B\}$ for the visits of a nearly Borel set $B$ during the lifetime of $X$.
Then the set $B$ is semipolar if and only if the set $N_B$ is almost surely countable.
Further we have \[\{s>0:~ X_{\tau_s(\omega)}(\omega) \in B\} \subseteq N_B(\omega)\] since the mapping $s\mapsto \tau_s$ is continuous and strictly increasing because $t\mapsto A_t$ is.
Therefore, any set $B$ which is semipolar for $X$ is also semipolar for $Y$ and $\nu$ does not charge sets which are semipolar for $Y$. 

Due to Assumption \ref{asn:acA}, the processes $Y$ and $\h Y$ and the measures $\mu$ and $\nu$ satisfy the assumptions of Theorem \ref{thm:main}.
Then $f^{A,\mu,\nu}$ and $R^A$ defined as above are exactly the equivalent results from Theorem \ref{thm:main} for $Y$ and the stopping time solving SEP$(Y,\mu, \nu)$ is given by
\begin{equation*}
T = \inf\{t >0:~ (t, Y_t) \in R^A\}.
\end{equation*}
Then for $f^{A, \mu, \nu}$ as in \eqref{eq:tc-red}, we have $f^{A, \mu, \nu}  (t, x) = \mu P^A_{t\wedge T}\h U^A (x)= \mu P_{\tau_{t\wedge T}}\h U(x)$ is the density of the measure $\mu P_{\tau_t\wedge A_S} U^A$ w.r.t. $m_A$.
If we define $T^A = \tau_T$, then for any nearly Borel set $B\in \mathcal{E}^n$ we obtain $\P^\mu (X_{T^A}\in B) = \P^\mu (Y_T\in B)= \nu (B)$ and it follows that for any solution $T$ to SEP$(Y,\mu, \nu)$, we have that $T^A= \tau_T$ is a solution for \sep. The optimality of Property 2 then naturally follows.

Finally, since $\inf\{s >0:~ (s, Y_s) \in R^A\} = \inf\{s >0:~ (A_{\tau_s}, X_{\tau_s}) \in R^A\}$, for $T^A= A_T$, we know that 
$$ T^A = \inf \{ t>0: ~ (A_t, X_t)\in R^A\},$$
which completes the proof.
\end{proof}

\begin{remark}
Assumption \ref{asn:acA} does not always hold, even under Assumption \ref{asn:dual}. For instance, let $X=(X^1,X^2)$ be the Markov process given by
$$\dd X_t = (\ddi B_t, a(X^1_t) \dd t)$$
where $a$ is non-negative, bounded, smooth with (for instance) $a'$ strictly positive, and $B$ is a linear Brownian motion. Then by the weak H\"ormander criterion, $X$ admits transition probabilities with respect to Lebesgue measure and satisfies Assumption \ref{asn:dual}. However taking the time-change $\tau_s$ corresponding to $\dd A_t = a(X_t^1) \dd t$, the resulting process satisfies
$$\dd Y_s = (\ddi B_{\tau_s}, \dd s)$$
which does not admit transition probabilities.
\end{remark}

\begin{remark}
Let $X$ be the diffusion with generator given in H\"ormander form by 
$$\mathcal{L}_X = V_0 + \sum_{i=1}^n V_i^2$$
(with for instance the $V_i$'s with bounded derivatives of all order), then (assuming $a$ also smooth,say) the generator of $Y$ is given by
$$\mathcal{L}_Y = \frac{1}{a} \mathcal{L}_X =V^A_0 + \sum_{i=1}^n \left(\frac{1}{a^{1/2}} V_i\right)^2$$
for some vector field $V^A_0$. In particular, if the following strong H\" ormander condition holds for $X$ :
\begin{equation*}
\forall x \in \R^d,~~ \mathrm{Lie}[V_1,\ldots,V_n](x) = \R^d
\end{equation*}
it also holds for the generator of $Y$, in which case $Y$ admits transition probabilities with respect to Lebesgue measure. This condition is for instance satisfied when $X$ is multi-dimensional Brownian motion (or more generally, Brownian motion on a Carnot group).
\end{remark}

\begin{remark}[Obstacle PDE]
The generator of the time-changed process $\h Y$ is given by $\L^A f(x) = \frac{1}{a(x)} \L f(x)$, see \cite{dynkin1965markov}.
Hence, we can again identify $f^{A,\mu,\nu}(t,x)$ as the solution to the obstacle problem
\[\min \left[ (\partial_t - a^{-1} \L) u, u - \nu\h{U} \right] = 0,\quad u(0,\cdot) = \mu \h{U} \qquad \text{ on } (0,+\infty)\times E\]
provided additional regularity assumptions are made that guarantee well-posedness of the above PDE. 
However, analogous to Corollary \ref{cor:OST}, dynamic programming applies without any additionally assumptions on $\L$ and $a$.  
\end{remark}
\begin{remark}[Vallois' embedding as limit of Root type embedding]
Item \eqref{thm:itm:addoptimal} in Theorem \ref{thm:tc-embed} implies that 
 $$T^A = \argmin \limits_{S:~ \mu P_S = \nu} \E^\mu [F(A_S)], \; \mbox{ for any  non-decreasing convex function $F$}.$$
Taking $X$ as the one-dimensional Brownian motion and $a(x) =\delta_0(x)$ the Dirac at $0$, the additive functional $A$ becomes the local time of $X$ at $0$.
  Thus -- at least informally since $a$ is not bounded from below -- Theorem \ref{thm:tc-embed} recovers the classical Vallois embedding, see e.g.  \cite{cox2007unifying, vallois1992quelques}. 
\end{remark}
\subsection{Examples} 
We now apply Theorem~\ref{thm:tc-embed} to concrete Markov processes. 

\begin{example}[Brownian motion $B$ and $A_t= \int _0^t \exp(2B_s)\dd s$]
Taking $X_t = B_t$ as the one-dimensional Brownian motion, the additive functional $\int _0^t \exp(2B_s)\dd s$ has received much attention (see e.g. \cite{matsumoto2005exponential}) due to application in mathematical finance in the context of Asian options. Then $B_{\tau_t} \de \log (Z_t)$, where $Z$ is the Bessel process of index 0 for which the transition density is well known (see \cite{matsumoto2005exponential}). 
Figure \ref{fig:BM-tc} on page \pageref{fig:BM-tc} shows the Root barriers for $\mu = \delta_0$ and $\nu=\mathrm{Uniform}[-1,1]$.
\end{example}
\begin{figure}[h!]
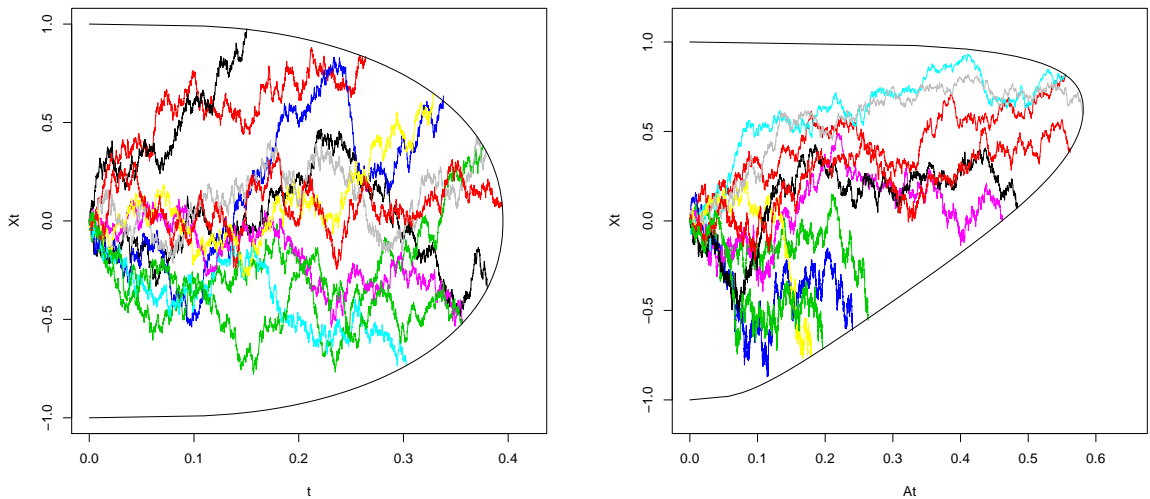

\centering
\subfigure{
\includegraphics[width= 0.48\textwidth]{Embedding_BM}
}
\subfigure{
\includegraphics[width= 0.48\textwidth]{Embedding_BM_TC}
}
\caption{Root embedding for Brownian motion and time-changed Brownian motion, with the same time discretisation in both pictures, $\mu = \delta_0$, $\nu=\mathrm{Uniform}[-1,1]$ }\label{fig:BM-tc}
\end{figure}
\begin{example}[Symmetric stable L\'evy process $X$ and $A_t=2t+ \int_0^t\mathrm{arctan}(4X_s)\dd s$]
For smooth $a$ with $c_1\leq a\leq c_2$ for some $c_1, c_2>0$, from \cite[Theorem (2.5)]{bichteler1983calcul}, the time-changed process $Y_t=X_{\tau_t}$ has absolutely continuous transition density with respect to the Lebesgue measure.
  Comparing the Root barriers for $\mu = \mathrm{Uniform}[-1,1]$ and $\nu = 0.75\cdot \mathrm{Beta}(2,2)$ for $X$ and $Y$, we can see that the barrier in Figure \ref{fig:tc:symstable}(b) on page \pageref{fig:tc:symstable} is not symmetric, unlike the barrier for $X$ in Figure \ref{fig:symstable}(b).
  Due to the time change, the process $Y$ runs faster past negative increments and more slowly through the positive parts which leads to hitting the barrier early on the negative parts and much later on the positive parts compared to $X$.
\begin{figure}[p]
\centering
\subfigure[Resulting barrier according Theorem \ref{thm:tc-embed} and the trajectories hitting the barrier among 100 samples]{
\includegraphics[width= 0.93\textwidth]{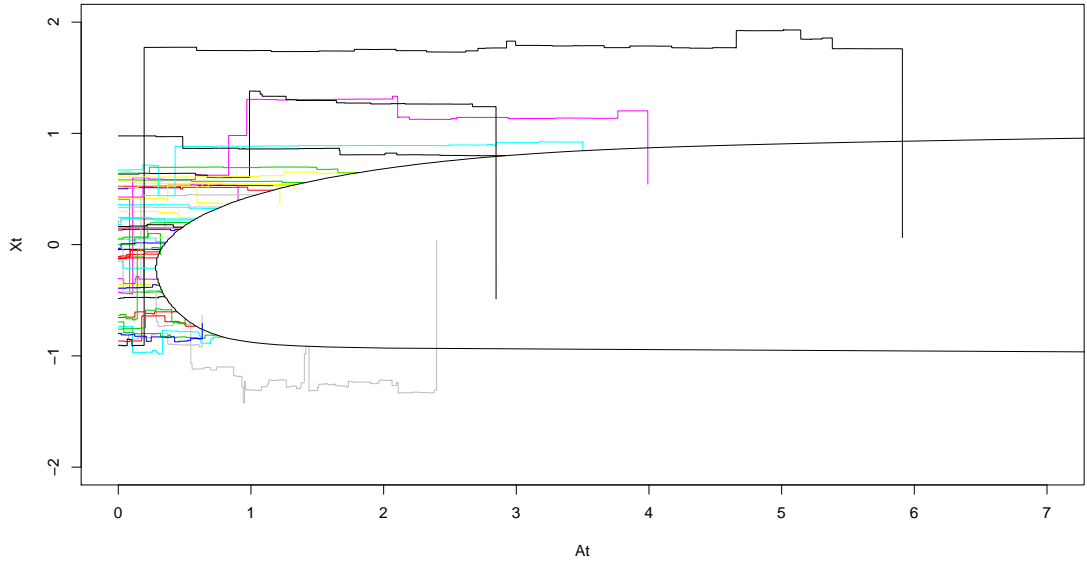}
}
\subfigure[Minimal excessive majorant, when it touches $\nu \h U$]{
\includegraphics[width= 0.45\textwidth]{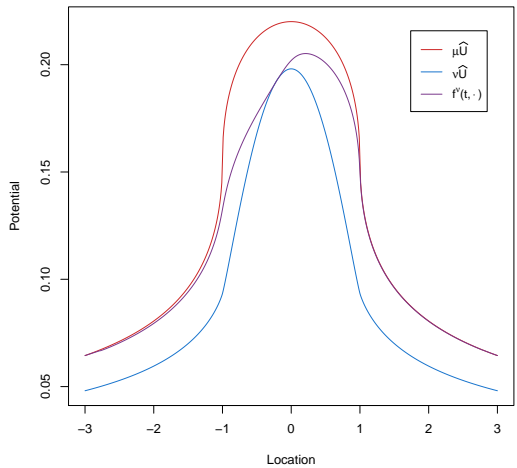}
}
\subfigure[Comparison of the quantiles of the Beta distribution and stopped values of 500 trajectories]{
\includegraphics[width=0.45\textwidth]{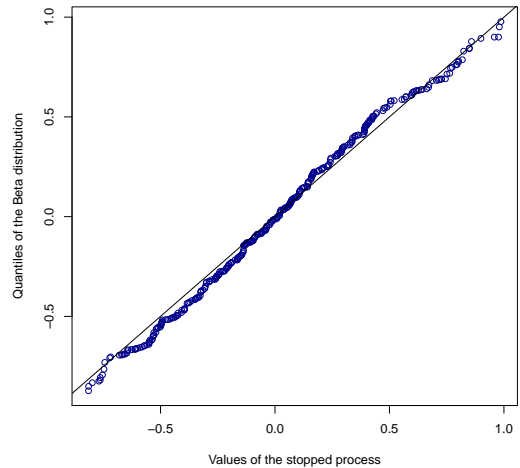}
}
\caption{Root embedding for the time-changed symmetric $\sfrac{1}{2}$-stable process, with $a(x) = 2+\mathrm{arctan}(4x)$, $\mu = \mathrm{Uniform}[-1,1]$, $\nu = 0.75\cdot \mathrm{Beta}(2,2)$.\label{fig:tc:symstable}}
\end{figure}
\end{example}

\newpage

\begin{appendices}
\section{Basic definitions from potential theory}\label{app:basic}
Below, we state some definitions taken from \cite[Chapter 0 and 1]{blumenthal2007markov}:

\begin{definition}[Universally measurable sets and nearly Borel sets]\label{def:univmeas}
Given a Borel \s-algebra $\mathcal{E}$, define the following \s-algebras on $E$:
\begin{enumerate}
\item the $\sigma$-algebra of \emph{universally measurable sets} $\mathcal{E}^\ast = \bigcap_{\mu \text{ finite}} \mathcal{E}^\mu$ given as intersection of completions $\mathcal{E}^\mu$ of $\mathcal{E}$ with respect to finite measures $\mu$,
\item  the $\sigma$-algebra  $\mathcal{E}^n$ of \emph{nearly Borel sets}.
  We call a set $B$ nearly Borel (with respect to $X$) if for each finite measure $\mu$ on $E$, there exists Borel sets $B_1 \subset B \subset B_2$ such that $\P^\mu(\exists t \geq 0:~X_t \in B_2 \setminus B_1) = 0$.
\end{enumerate} 
\end{definition}

\begin{definition}[Standard process]\label{def:stdproc}
 On a filtered probability space $\left(\Omega, \F, (\F_t)_{t \geq 0},  (\P^x)_{x \in E}\right)$, the stochastic process $X = (X_t)_{t\geq 0}$ with shift operator $\theta_t$ is called a
\emph{Markov process} with augmented state space $(E_\Delta, \mathcal{E}_\Delta)$, if for all $s,t\geq 0$,  $x\in E_\Delta$, and $B\in \mathcal{E}_\Delta$
\begin{enumerate}
\item $X_t$ is $\F_t$-$\mathcal{E}_\Delta$-measurable,
\item the map $x\mapsto \P^x(X_t\in B)$ from $E_\Delta$ to $[0,1]$ is $\mathcal{E}_\Delta$-measurable and $\P^\Delta(X_0 = \Delta)=1$,
\item $X_t\circ \theta_s = X_{t+s}$, and
\item $\P^x(X_{t+s}\in  B|\F_t) = \P^{X_t} (X_s\in B)$.
\end{enumerate} 
Furthermore, it is called a \emph{standard process}, if additionally
\begin{enumerate}
\item $(\F_t)_{t\geq 0}$ is right-continuous and $\F_t$ is complete with respect to the family of measures $\{\P^x, ~ x\in E\}$,
\item the sample paths $t\mapsto X_t(\omega)$ are c\`adl\`ag a.s.,
\item $X$ satisfies the strong Markov property, i.e. $X_T$ is $\F_T$-$\mathcal{E}^*_\Delta$-measurable and for all bounded measurable functions $f$ and $(\F_t)_{t\geq 0}$-stopping times $T$ we have $\E^x[f(X_{T+t})] = \E^x\big[\E^{X_T}[f(X_t)]\big]$ for all $x\in E$ and $t>0$, and
\item $X$ is quasi-left-continuous on $[0,\zeta)$, i.e.~for any increasing sequence $(T_n)_{n\in \N}$ of $(\F_t)_{t\geq 0}$-stopping times such that $T_n\uparrow T$ almost surely for a stopping time $T$, it holds that $X_{T_n}\rightarrow X_T$ almost surely on $\{T< \zeta\}$. 
\end{enumerate}
\end{definition}
\paragraph{Semigroup and potential.} In Table~\ref{tbl:semigroups} we let $x\in E$, $A\in \mathcal{E}^n$, $I\subseteq [0,\infty)$, $f:E\to\R$ be a $\mathcal{E}^\ast$-measurable function (extended to $E_\Delta$ by $f(\Delta)=0$), $\mu$ be a Borel measure on $E$, and $T$ be a stopping time.\\

\begin{table}[h!]
\centering
\begin{spacing}{1.2}
\begin{tabular}{L{0.35\textwidth}L{0.15\textwidth}L{0.5\textwidth}}
      \toprule
      \multicolumn{3}{c}{Markov process $X$} \\
      \midrule
semigroup $P=(P_t)_{t \ge 0}$ of $X$ & $P_tf$				& $P_t f(x)= \int P_t(x, \dd  y) f(y) = \E^x[f(X_t)]$ \\
& $\mu P_t$ 				& $\mu P_t (A)=\int \mu(\ddi x) P_t(x, A) = \P^\mu(X_t \in A)$ \\
potential $U =\int_0^\infty P_t \dd t$ of $X$  &$Uf$& $Uf(x)=\int U(x, \dd y) f(y)=\E^x\big[\int_0^\infty f(X_t) \dd t\big]$ \\
 & 
 $\mu U$  & $\mu U (A)=\int \mu(\ddi x) U(x,A) = \E^\mu\big[\int_0 ^\infty \ind_{\{X_t\in A\}}\dd t\big]$ \\
      \midrule
\multicolumn{3}{c}{Stopping times}\\
\midrule
semigroup at $T$ &$P_T$ & $P_T(x,\dd y)=\P^x\left(  X_T\in \dd  y; T <\zeta\right)$ \\
first hitting time&$T_A$& $T_A =\inf\{t>0:\,X_t\in A\}$, $\quad P_A= P_{T_A}$\\
       \bottomrule
\end{tabular}
\end{spacing}
\caption{Semigroups, potentials and stopping times for $X$ and its space-time lift $\overline X$.\label{tbl:semigroups}}
\end{table}

\noindent The potential $\mu U(A)$ of a measure $\mu$ on a set $A$ describes the occupation of the set $A$ by $X$ over its lifetime when starting in the initial distribution $\mu$; on the other hand, $Uf(x)$ evaluates the mass transported over the entire lifetime after starting in $x$ under $f$.
This explains the notation for the different actions $\mu U$ and $Uf$ of the potential kernel on $\mu$ and $f$ as we start in $\mu$ and end in $f$, respectively. 

\begin{definition}[Excessive functions and measures]
A non-negative $\mathcal{E}^\ast$-measurable function $f:E\to\R_+ \cup \{+\infty\}$ is called \emph{excessive} if $P_t f(x)\le f(x)$ for all $x\in E$ and $\lim_{t \downarrow 0}P_tf=f$ pointwise. 

Analogously, a Borel measure $\mu$ is called \emph{excessive} if it is $\sigma$-finite and $\mu P_t(A)\leq \mu(A)$ for all $A\in \mathcal{E}$ and $ t\ge 0$.
\end{definition}

\paragraph{Fine topology.}

In Table~\ref{tbl:fine}, the set $A$ denotes a nearly Borel set. 

\begin{table}[h!]
\begin{spacing}{1.2}
\begin{tabular}{L{0.28\textwidth}L{0.72\textwidth}}
      \toprule
      \multicolumn{2}{c}{Fine topology}\\
      \midrule
 polar set & $A$ is polar if $T_A=\infty$, $\P^x$-a.s.~for all $x\in E$\\
 thin set & $A$ is thin if $T_A>0$, $\P^x$-a.s.~for all $x\in E$\\ 
  semipolar set & $A$ is semipolar, if it is a countable union of thin sets\\
  $A^r$&the regular points of $A$; a point $x$ is called regular if
         $T_A=0$, $\P^x$-a.s.\\
fine topology on $E$ & topology where the open sets are $O\subseteq E$
                       s.t.~$T_{O^c}>0$, $\P^x$-a.s. $\forall x\in O$\\
 fine closure of $A$& the set $A\cup A^r$\\
        \bottomrule
\end{tabular}
\end{spacing}
\caption{\label{tbl:fine}The fine topology on $E$.}
\end{table}

\noindent Intuitively, the polar sets are those sets which are never visited at positive times by the process, while semipolar sets are those sets which are almost surely visited only countably many times by the process. Every polar set is semipolar, but the reverse implication is not true in general.

\section{Properties of the r\'eduite}\label{app:red}
\begin{definition}[R\'eduite]
Given a Markov semigroup $(P_t)$ associated to a standard process, and given $h \geq 0$ Borel-measurable and finely lower semicontinuous, we define the \emph{r\'eduite} (or smallest excessive majorant) of $h$ by
\begin{equation}
{\rm{Red}}_P(h) = \inf\left\{ f \mbox{ $P$-excessive}, \;\; f \geq h \right\}.
\end{equation}
\end{definition}

\begin{proposition}[Shiryaev, Lemma 3 and Theorem 1 in {\cite[Chapter 3]{shiryaev2007optimal}}]\label{prop:red1}
Let $X$ be a standard process with semigroup $(P_t)$ and $h\geq 0$ finely lower semicontinuous. Then:
\begin{enumerate}
\item ${\rm{Red}}_P(h) $ is excessive.
\item For all $x \in E$, it holds that 
$${\rm{Red}}_P(h)(x) = \sup_{\tau} \E^x\left[h(X_{\tau}) \ind_{\{\tau<\zeta\}}\right],$$
where the supremum ranges over stopping times $\tau$ taking values in $[0,\zeta)$.
\item Define for $\delta>0$ and $g \geq 0$ Borel-measurable, $R_\delta(g) = g \vee P_{\delta} g.$ Then it holds that
$${\rm{Red}}_P(h) = \lim_{n \to \infty} \lim_{N \to \infty} R_{2^{-n}}^N(h).$$
\end{enumerate}
\end{proposition}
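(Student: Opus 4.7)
The plan is to prove the three items in order, since item (1) underlies the identification in (2), and item (3) gives a constructive handle that complements both. All three are classical facts (see \cite{shiryaev2007optimal, blumenthal2007markov}) so the task is really to assemble standard potential-theoretic ingredients: the Choquet/downward-directed infimum argument for excessivity, the strong Markov property for the optimal stopping representation, and monotone convergence for the dynamic programming approximation.

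For item (1), I would first show that the family $\mathcal{F}=\{f \text{ $P$-excessive}, f \geq h\}$ is nonempty (e.g.\ the constant $+\infty$ is excessive, and typically there are finite majorants when $h$ is bounded by a potential) and is stable under finite pointwise minima, hence downward directed. Then the pointwise infimum $v := \mathrm{Red}_P(h)$ satisfies the supermedian inequality $P_t v \leq v$, since for every $f \in \mathcal{F}$ one has $P_t v \leq P_t f \leq f$, and taking the infimum over $f$ gives $P_t v \leq v$. For right-continuity at $0$, I would use that $t \mapsto P_t v$ is non-decreasing as $t \downarrow 0$ (monotonicity of supermedian functions), so $\tilde v := \lim_{t\downarrow 0} P_t v$ exists, is excessive, and $\tilde v \leq v$. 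The fine lower-semicontinuity of $h$ then implies $\tilde v \geq h$ (because $t \mapsto h(X_t)$ is right-lower-semicontinuous along paths), so $\tilde v \in \mathcal{F}$ and by minimality $\tilde v = v$. This last implication is the main subtle point, since without fine lower-semicontinuity one only recovers an excessive regularization of $v$ rather than $v$ itself.

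For item (2), let $V(x) := \sup_\tau \E^x[h(X_\tau)\ind_{\tau<\zeta}]$. To show $V \leq \mathrm{Red}_P(h)$, observe that for any excessive majorant $f \geq h$ and any stopping time $\tau$ we have $\E^x[f(X_\tau)\ind_{\tau<\zeta}] \leq f(x)$ by the optional sampling theorem for excessive functions, so $\E^x[h(X_\tau)\ind_{\tau<\zeta}] \leq f(x)$ and taking $\sup_\tau$ and $\inf_f$ gives $V \leq \mathrm{Red}_P(h)$. Conversely, it suffices to show that $V$ itself is an excessive majorant: $V \geq h$ is immediate by taking $\tau = 0$, and the excessivity $P_t V \leq V$ follows from the strong Markov property applied to the shifted stopping time $t + \tau \circ \theta_t$, together with right-continuity at $0$ coming from the fine lower-semicontinuity of $h$ and item (1)'s argument.

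For item (3), the operators $R_\delta$ are monotone and each iterate $R_\delta^N(h)$ is finely lower-semicontinuous and bounded above by $\mathrm{Red}_P(h)$. The pointwise limit $W_\delta := \lim_N R_\delta^N(h)$ satisfies $W_\delta = h \vee P_\delta W_\delta$ by monotone convergence, so $W_\delta \geq h$ and $W_\delta \geq P_\delta W_\delta$, i.e.\ $W_\delta$ is $\delta$-superharmonic. Taking $\delta_n = 2^{-n}$, the sequence $W_{\delta_n}$ is non-decreasing (since $W_{\delta/2} \geq W_{\delta/2} \vee P_{\delta/2} W_{\delta/2} \geq W_{\delta/2} \vee P_\delta W_{\delta/2}$, iterating) and bounded by $\mathrm{Red}_P(h)$; its limit $W$ satisfies $P_t W \leq W$ for all dyadic $t>0$, and by right-continuity of the semigroup (Lemma \ref{lem:rightcont semigroup}) for all $t \geq 0$, so $W$ is supermedian with $W \geq h$, and the regularization $\lim_{t\downarrow 0} P_t W$ is excessive, majorizes $h$ (using fine lower-semicontinuity as in item (1)), and is bounded by $\mathrm{Red}_P(h)$. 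Minimality then forces equality, which is the claimed identity. The main obstacle throughout is handling the interface between fine lower-semicontinuity of $h$ and the semigroup regularization at $t=0$; outside of this, the argument is a direct application of the strong Markov property and monotone convergence.
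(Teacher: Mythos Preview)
The paper does not prove this proposition; it is stated as a quotation from Shiryaev's book (Lemma 3 and Theorem 1 in \cite[Chapter 3]{shiryaev2007optimal}) and used as a black box. So there is no ``paper's own proof'' to compare against, and your sketch is essentially the standard argument one finds in that reference and in \cite{blumenthal2007markov}.

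Your outline is correct in all three items. One small point: in item (3), invoking Lemma~\ref{lem:rightcont semigroup} to pass from $P_tW\le W$ for dyadic $t$ to all $t$ is not quite right, since that lemma requires Assumption~\ref{asn:dual} and bounded $f$, neither of which is assumed in this appendix proposition. The cleaner route is the one implicit in your own argument for item (2): identify $W_{2^{-n}}(x)=\sup_{\tau}\E^x[h(X_\tau)]$ where the supremum is over stopping times valued in $2^{-n}\Z_+$, so that $W=\sup_n W_{2^{-n}}$ is the value over all dyadic-valued stopping times; then the inequality $W\le V$ (with $V$ the continuous-time value from item (2)) is immediate, while $V\le\mathrm{Red}_P(h)$ was already shown, and the excessive regularization of $W$ majorizes $h$ by fine lower-semicontinuity exactly as in item (1), forcing all three to coincide. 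This avoids any appeal to semigroup continuity.
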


Given a (positive) Borel measure $\gamma$, we similarly define
\begin{equation}
{\rm{Red}}_P(\gamma) = \inf\left\{ \lambda \mbox{ $P$-excessive measure}, \;\; \lambda \geq \gamma \right\}
\end{equation}
(note that the infimum above is the infimum of a family of measures, namely the smallest measure dominated by all measures in the family).

\begin{lemma} \label{lem:RedMtoF}
Assume that $X$ and $\h{X}$ are standard processes in strong duality with respect to a reference measure $\xi$. Let $h$ be finely lower semi-continuous and $\gamma(\ddi x) = h(x) \xi(\ddi x)$.
Then
$${\rm{Red}}_{{P}}(h) = \frac{\dd {\rm{Red}}_{\h{P}}(\gamma) }{\dd \xi}.$$
\end{lemma}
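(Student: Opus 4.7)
The plan is to prove the two inequalities $f^\star \xi \geq \lambda^\star$ and $\lambda^\star \geq f^\star \xi$ separately, where I write $f^\star := {\rm{Red}}_P(h)$ and $\lambda^\star := {\rm{Red}}_{\hat P}(\gamma)$. The key tool is the bijective correspondence between $P$-excessive functions (finite $\xi$-a.e.) and $\hat P$-excessive measures via densities that follows from strong duality and that is recalled in Section~\ref{sec:notation}; note that by symmetry of Assumption~\ref{asn:dual}, the statement applies with the roles of $X$ and $\hat X$ interchanged, so that $\hat P$-excessive measures are exactly those of the form $g\xi$ with $g$ a $P$-excessive function finite $\xi$-a.e.

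First I would establish $\lambda^\star \leq f^\star \xi$. By Proposition~\ref{prop:red1}(1), $f^\star$ is $P$-excessive; then for every Borel set $A$ duality gives
\[
\big((f^\star \xi)\hat P_t\big)(A) \;=\; \int_A P_t f^\star\,\dd\xi \;\leq\; \int_A f^\star\,\dd\xi,
\]
so that $f^\star \xi$ is a $\hat P$-excessive measure. It dominates $\gamma = h\xi$ since $f^\star \geq h$ pointwise, and therefore dominates the smallest such majorant, yielding $\lambda^\star \leq f^\star \xi$.

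For the reverse inequality, the duality assumption provides a $P$-excessive density $g^\star := \dd\lambda^\star/\dd\xi$ of $\lambda^\star$, and the measure inequality $\lambda^\star \geq \gamma$ translates into $g^\star \geq h$ $\xi$-almost everywhere. If this could be upgraded to a \emph{pointwise} inequality $g^\star \geq h$, then $P$-excessiveness of $g^\star$ together with the minimality defining $f^\star$ would yield $g^\star \geq f^\star$, and hence $\lambda^\star \geq f^\star \xi$ would close the argument.

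The step I expect to be the main obstacle is exactly this passage from $\xi$-a.e.~to pointwise, which I would handle via the fine topology. The set $N := \{g^\star < h\}$ has $\xi$-measure zero by the previous paragraph. Since $g^\star$ is $P$-excessive it is finely continuous, and since $h$ is finely lower semicontinuous by assumption, the identity
\[
N \;=\; \bigcup_{r \in \Q}\big(\{g^\star < r\} \cap \{h > r\}\big)
\]
expresses $N$ as a countable union of finely open sets, so $N$ itself is finely open. Finally, any finely open set of $\xi$-measure zero must be empty: for $x \in N$, fine openness gives $T_{N^c} > 0$ $\P^x$-almost surely, hence $U(x,N) \geq \E^x[T_{N^c}] > 0$, whereas $\xi(N) = 0$ combined with $U(x,\cdot) \ll \xi$ (a consequence of the semigroup absolute continuity in Assumption~\ref{asn:dual}) forces $U(x,N) = 0$, a contradiction. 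Thus $N = \emptyset$, which completes the proof.
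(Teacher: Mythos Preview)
Your proof is correct and follows the same two-inequality structure as the paper: show $f^\star\xi$ is a $\h P$-excessive majorant of $\gamma$ (hence $\lambda^\star \leq f^\star\xi$), and show the $P$-excessive density $g^\star$ of $\lambda^\star$ is a pointwise majorant of $h$ (hence $g^\star \geq f^\star$). The only substantive difference is how you upgrade $g^\star \geq h$ from $\xi$-a.e.\ to everywhere. The paper does this in one line via the semigroup: since $\{g^\star < h\}$ is $\xi$-null it is $P_t(x,\cdot)$-null for Lebesgue-a.e.\ $t$ (as $U(x,\cdot)\ll\xi$), so
\[
g^\star = \lim_{t\to 0} P_t g^\star \;\geq\; \liminf_{t\to 0} P_t h \;\geq\; h,
\]
the last inequality by fine lower semicontinuity of $h$. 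Your route via the fine topology (the set $\{g^\star<h\}$ is finely open and $\xi$-null, hence empty because $U(x,\cdot)\ll\xi$) is a valid alternative and arguably makes the role of the reference measure more transparent; note, incidentally, that you only need $U(x,\cdot)\ll\xi$, which already follows from strong duality (resolvent absolute continuity) and does not require the full semigroup absolute continuity of Assumption~\ref{asn:dual}.
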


\begin{proof}
It is easy to see that ${\rm{Red}}_{\h{P}}(\gamma)$ is a $\h{P}$-excessive measure, and it therefore admits a ${P}$-excessive density $g$. Since ${{\rm{Red}}}_{\h{P}}(\gamma) \geq \gamma$, it holds that $g \geq h$ $\xi$-a.e. We then actually have the inequality everywhere since
$$ g = \lim_{t \to 0} P_t g \geq  \liminf_{t \to 0}P_t h \geq h $$
using the semicontinuity of $h$. Therefore $g \geq {\rm{Red}}_P(h)$.

For the opposite inequality, note that $\lambda(\ddi x):={\rm{Red}}_P(h)(x) \xi(\ddi x)$ is a $\h{P}$-excessive measure which dominates $\gamma$, so that ${\rm{Red}}_P(h) \geq g$ $\xi$-a.e., and then everywhere since both are excessive functions.
\end{proof}

\section{Hypo-elliptic Laplacian}\label{app:hypo-elliptic}
Denote with $(G_{n,d}, *)$ the free nilpotent group of depth $n$ of $d$ generators.
Denote with $V_1,\ldots,V_d$ a basis of $\R^d$ and identify it as left-invariant vector fields (the first level of the Lie algebra of $G_{n,d}$).  
If $B=(B^i)$ is a $d$-dimensional Brownian motion, then it is well-known that the solution of the SDE  
\begin{align*}
\dd X_t=\sum_{i=1}^d  V_i (X_t)\circ \dd B^i_t 
\end{align*}
is a (left)-Brownian motion\footnote{A process $X$ taking values in Lie-group $G$ is called (left) Brownian motion in $G$ if $t\mapsto X_t$ is continuous, $\left(  X^{-1}_sX_{t+s}\right)_{t \ge 0}$ is independent of
     $(X_u)_{0\le u \le s}$, and 
$\left(  X^{-1}_sX_{t+s}\right)_{t \ge 0}$ and $(X_t)_{t\ge 0}$ are identical in law. 
}
 on the Lie-group $G_{n,d}$.
Moreover, $X$ is a Markov process with generator $\frac{1}{2}\sum_{i=1}^d V_i^2$.
Following \cite{bonfiglioli2003subharmonic}, there exists a homogeneous norm $N: G_{n,d}\times G_{n,d} \to [0,\infty)$ such that 
 $$u(g*h\ie) =u(g, h) =c_q\cdot  N(g * h\ie)^{-q+2},$$
with a constant $q= q(n,d)$; moreover, $N$ is the fundamental solution to $-\Delta_G u(\cdot, h) = \delta_h$. In the case where $n=1$ and $d\geq 3$, $N$ is just given by the Euclidean norm on $\R^d$. For $n=d=2$, $G_{2,2}$ can be identified as the Heisenberg group.
It is more convenient to work in the associated Lie algebra $\mathfrak{g}_{2,2}=\log G_{2,2}$ which we identify in coordinates as $\R^3$.
Then $q=4$ and 
 \[N(g)=N(x,y,a) = \left(  (x^2+y^2)^2 + 16 a^2\right)^{\frac{1}{4}}.\]
For any measure $\mu$ on $G_{n,d}$ we then define 
$$ \mu \h U (h) = \int_{G_{n,d}} \mu (\ddi g)~ u(g, h).$$

\begin{lemma}\label{Prop:Balayage Heisenberg}
Let $\tilde{\nu}$ be any probability measure on $(0,\infty)$.
Define the measure
\begin{align}
\nu(\dd g)&:= \int_0^\infty \tilde{\nu}(\ddi r) \ind_{\partial D_N(r)}(g)\frac{|\nabla_{G}u|^2}{|\nabla u|}(g)~\sigma(\dd g),
\end{align}
where $D_N(r) = \{g\in G_{n,d}:~ N(g) < r\}$ is the open ball with respect to $N$, $\sigma$ is the surface measure and 
$$|\nabla_{G} N|^2 := \sum_{i=1}^n |V_i(N)|^2.$$
Then $\nu$ is a probability measures on $G_{n,d}$ such that $\delta_0 \prec \nu$ with respect to Brownian motion on $G_{n,d}$.
\end{lemma}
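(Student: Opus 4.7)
The plan is to establish the balayage order $\delta_0 \prec \nu$ via the equivalent pointwise inequality of potentials, namely $u(0,h) \ge \int u(g,h)\,\nu(\ddi g)$ for every $h\in G_{n,d}$, together with the fact that $\nu$ integrates to $1$. Both statements will follow from the surface mean value formula for the sub-Laplacian $\Delta_G$ on the Kor\'anyi/homogeneous balls $D_N(r)$, a cornerstone of Carnot group potential theory developed by Gaveau in the Heisenberg case and by Bonfiglioli--Lanconelli--Uguzzoni in the general stratified setting.

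Concretely, I would first record the mean value identity: for every $r>0$ and every function $h$ continuous on $\overline{D_N(r)}$ and $\Delta_G$-harmonic in a neighbourhood of it,
\[h(0) = \int_{\partial D_N(r)} h(g)\, \frac{|\nabla_G u|^2}{|\nabla u|}(g)\,\sigma(\ddi g).\]
Applied to $h\equiv 1$ this already shows that the measure $\nu_r(\ddi g):=\ind_{\partial D_N(r)}(g)\,\frac{|\nabla_G u|^2}{|\nabla u|}(g)\,\sigma(\ddi g)$ is a probability measure on $\partial D_N(r)$, so that $\nu=\int \nu_r\,\tilde\nu(\ddi r)$ is a probability measure on $G_{n,d}$ by Fubini.

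Next, I would upgrade the identity to the super mean value inequality $f(0)\ge \int f\,\ddi\nu_r$ for every nonnegative $\Delta_G$-superharmonic (equivalently $X$-excessive) $f$. This extension is classical and may be obtained either by Perron--Wiener--Brelot type approximation from below by harmonic functions or, more directly, via the Riesz representation of excessive functions as potentials. Applied to $f(g):=u(g,h_0)$, which is everywhere $X$-excessive since $u$ is the potential kernel of $X$, this yields $u(0,h_0)\ge \int u(g,h_0)\,\nu_r(\ddi g)$ for each $r>0$. Integrating against $\tilde\nu(\ddi r)$ and invoking Fubini then gives $\delta_0 \h U(h_0)\ge \nu \h U(h_0)$, which by the remark following the definition of balayage order is precisely $\delta_0\prec\nu$.

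The main obstacle is justifying the super mean value inequality for the singular kernel $u(\cdot,h_0)$. When $h_0\notin \overline{D_N(r)}$ the function $u(\cdot,h_0)$ is $\Delta_G$-harmonic on a neighbourhood of $\overline{D_N(r)}$ and the identity holds with equality; when $h_0\in \overline{D_N(r)}$ one must control the contribution of the singularity at $h_0$, which can be done by truncation of $u(\cdot,h_0)$ or by convolving $\delta_{h_0}$ with a smoothing family and passing to the limit. A second subtlety is that $\partial D_N(r)$ is generally non-smooth at characteristic points when $n\ge 2$, so the surface integral has to be interpreted in the stratified sense of Bonfiglioli--Lanconelli--Uguzzoni; this is however well-developed in the literature and not a serious obstruction to the argument above.
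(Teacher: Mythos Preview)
Your proposal is correct and follows essentially the same route as the paper: both arguments invoke the surface mean-value representation on the homogeneous balls $D_N(r)$ from Bonfiglioli--Lanconelli, apply it to the constant function $1$ to see that each $\nu_r$ (hence $\nu$) is a probability measure, and then use the super-mean-value inequality for the (super)harmonic potential kernel to obtain $\delta_0\h U\ge \nu\h U$. The only cosmetic difference is that the paper phrases the last step via the sub-mean characterisation applied to $-u_0$ and performs a left-translation change of variables, whereas you work directly at the centre $0$ with $f(g)=u(g,h_0)$; your formulation is arguably slightly cleaner and sidesteps that change of variables.
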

\begin{proof}
  From \cite[Theorems 3.4 and 4.1]{bonfiglioli2003subharmonic} we know that a upper semi-continous function $f$ is sub-harmonic on $G_{n,d}$ if and only if it satisfies the global-surface sub-mean property  for any $r>0$
\begin{equation}\label{eq:subharmonic}
f(g) \le \int_{\partial D_N(g,r)}\psi(h)~ f(h)~\sigma(\dd h) ,
\end{equation}
  where  $\psi(h):= \frac{|\nabla_{G}u|^2}{|\nabla u|}(h),$
  and  $D_N(g,r) = \{h\in G_{n,d}:~ N(g\ie h) < r\}$.
We write $u_0 (h) = u(0, h)$. Then $u_0 = \delta_0 \h U$ is subharmonic. 
The fact that $\nu$ is a probability measure follows as we have equality in \eqref{eq:subharmonic} for the harmonic function $f\equiv 1$.
For its potential we get
\begin{align*}
\nu\h U (g) &= \int_{G_{n.d}} u(g^{-1}*h) \nu(\ddi h) = \int_0^\infty  \left(\int_{\partial D_N(r)} \psi(h) u(g^{-1}h)\dd h\right)  \tilde \nu(\ddi r) \\
&= \int_0^\infty  \left( \int_{\partial D_N(g,r)} \psi(gh)u_0(h)\dd h\right)  \tilde \nu(\ddi r) \\
&\le \left(  \int_0^\infty \tilde \nu(\ddi r) \right)u_0(g\ie) = u_0(g)  = \delta_0\h U(g),
\end{align*}
where we used equation \eqref{eq:subharmonic} for $-u_0$ in the last inequality. 
\end{proof}

This in turns leads to an explicit density if we choose $\tilde \nu$ as the uniform measure on $(0,1)$.
\begin{corollary}
Let $n=d=2$. Then there exists a probability measure $\nu$ such that $\delta_0\prec \nu$ and that has a density with respect to Lebesgue measure given as 
\[
\nu(\ddi g) = \nu(\ddi(x,y,a)) =\frac{4}{\pi} \frac{x^2+y^2}{\sqrt{(x^2+y^2)^2+16 a^2}} \ind_{\{(x^2+y^2)^2+16a^2 <1\}}\dd x\dd y\dd a.
\]
\end{corollary}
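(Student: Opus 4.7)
The plan is to apply Proposition~\ref{Prop:Balayage Heisenberg} with $\tilde\nu$ the uniform measure on $(0,1)$ and then rewrite the resulting $\nu$ in Lebesgue coordinates by means of the coarea formula. With $\tilde\nu(\ddi r) = \ind_{(0,1)}(r)\dd r$, the proposition immediately produces a probability measure on $G_{2,2}$ satisfying $\delta_0\prec\nu$, so only the identification of its density remains.

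First, I would apply the Euclidean coarea formula along the level sets of $N$ to the defining integral
\[
\int_0^1 \dd r \int_{\partial D_N(r)} f(h)\,\psi(h)\,\sigma(\ddi h) = \int_{\{N<1\}} f(g)\,\psi(g)\,|\nabla N(g)|\,\dd g,
\]
which shows that $\nu$ is absolutely continuous with respect to Lebesgue measure on the Kor\'anyi ball $\{N<1\}$, with density $g\mapsto \psi(g)\,|\nabla N(g)|$. Since on $G_{2,2}$ one has $q=4$ and $u(0,h) = c\,N(h)^{-2}$ for an explicit constant $c$, a direct chain-rule computation gives
\[
|\nabla u| = \tfrac{2c}{N^3}|\nabla N|,\qquad |\nabla_G u|^2 = \tfrac{4c^2}{N^6}|\nabla_G N|^2,
\]
so that $\psi|\nabla N| = \tfrac{2c\,|\nabla_G N|^2}{N^3}$ and the problem reduces to evaluating $|\nabla_G N|^2$.

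Next, I would compute $|\nabla_G N|^2$ explicitly for $N^4 = (x^2+y^2)^2+16a^2$ and the vector fields $X=\partial_x+\tfrac12 y\,\partial_a$, $Y=\partial_y-\tfrac12 x\,\partial_a$. Differentiating $N$ gives
\[
XN=\frac{x(x^2+y^2)+4ay}{N^3},\qquad YN=\frac{y(x^2+y^2)-4ax}{N^3},
\]
and squaring, summing, and noting that the cross terms cancel yields
\[
|\nabla_G N|^2 = (XN)^2+(YN)^2 = \frac{(x^2+y^2)\bigl[(x^2+y^2)^2+16a^2\bigr]}{N^6} = \frac{x^2+y^2}{N^2}.
\]
Substituting back, the Lebesgue density of $\nu$ takes the form $\kappa\cdot \tfrac{x^2+y^2}{N^2}\ind_{\{N<1\}}$ for some constant $\kappa$.

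It remains to pin down $\kappa=4/\pi$. The cleanest route is to use that $\nu$ is a probability measure, so that $\kappa$ is determined by
\[
\kappa^{-1}=\int_{\{N<1\}} \frac{x^2+y^2}{\sqrt{(x^2+y^2)^2+16a^2}}\,\dd x\,\dd y\,\dd a;
\]
passing to cylindrical coordinates $(x,y)=(\rho\cos\phi,\rho\sin\phi)$ and then to $a=\tfrac{\rho^2}{4}\sinh t$ reduces this to an elementary integral evaluating to $\pi/4$. Alternatively, one can insert the known value of $c$ for the sub-Laplacian $\tfrac12(X^2+Y^2)$ on the Heisenberg group. The main technical obstacle is simply bookkeeping: keeping the various normalisations of $N$, $u_0$, and the surface mean-value kernel $\psi$ consistent with each other, while the horizontal-gradient calculation above is purely mechanical.
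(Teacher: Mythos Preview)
Your overall strategy---apply Lemma~\ref{Prop:Balayage Heisenberg} with an explicit $\tilde\nu$, convert the family of surface integrals into a Lebesgue density via the coarea formula, and then compute $|\nabla_G N|^2$ directly---is exactly what the paper intends (the paper gives no proof beyond the sentence preceding the Corollary). Your intermediate formulae are all correct: in particular $\psi\,|\nabla N|=\tfrac{2c}{N^{3}}\,|\nabla_G N|^{2}$ and $|\nabla_G N|^{2}=(x^{2}+y^{2})/N^{2}$.

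The gap is in the final substitution. Combining the two identities above gives the Lebesgue density
\[
\psi\,|\nabla N|\;=\;\frac{2c\,(x^{2}+y^{2})}{N^{5}},
\]
not a constant multiple of $(x^{2}+y^{2})/N^{2}$: you have silently dropped the factor $N^{-3}$ coming from $\psi$. With the uniform choice $\tilde\nu(\ddi r)=\ind_{(0,1)}(r)\,\dd r$ your argument therefore produces the probability density $\pi^{-1}(x^{2}+y^{2})\,N^{-5}\,\ind_{\{N<1\}}$, which still satisfies $\delta_{0}\prec\nu$ but is \emph{not} the formula stated in the Corollary. The stated density does arise from the Lemma, but with $\tilde\nu(\ddi r)=4r^{3}\ind_{(0,1)}(r)\,\dd r$: the extra factor $4N^{3}$ in the coarea integrand then exactly cancels the $N^{-3}$ you lost, after which your normalisation computation (the integral is indeed $\pi/4$) goes through verbatim. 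In short, the paper's one-line hint about the uniform measure appears to be imprecise, and your sketch inherits that imprecision; correcting the choice of $\tilde\nu$ repairs both.
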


\end{appendices}
\section*{Acknowledgements}
PG acknowledges the support of the ANR, via the ANR project ANR-16-CE40-0020-01. HO is grateful for support from the EPSRC grant “Datasig” [EP/S026347/1], the Oxford-Man Institute of Quantitative Finance, and the Alan Turing Institute.
CZ was supported by the EPSRC grant EP/N509711/1, via the project No. 1941799.

\bibliographystyle{plain}
{\small 

}
\end{document}